\newif\ifms
\msfalse
\ifms
  \documentclass[11pt]{article}
  \usepackage[margin=1in]{geometry}
  \usepackage{setspace}
  \usepackage{lineno}
\else
  \documentclass[11pt,letterpaper]{article}
  \usepackage[margin=1in]{geometry}
\fi

\usepackage{amsmath,amsfonts,amssymb,amsthm,bm,mathtools}
\usepackage[T1]{fontenc}
\usepackage[expansion=false]{microtype}
\usepackage{graphicx,booktabs,tabularx,array}
\usepackage{tikz}
\usetikzlibrary{arrows.meta,positioning}
\usepackage{enumitem,fancyhdr}
\usepackage[round,authoryear]{natbib}
\usepackage[hidelinks]{hyperref}
\hypersetup{
  pdftitle={Protect the Decision, Not the Disaster: Regret-Robust Capacity Sizing and Positioning with Transportation Recourse},
  pdfauthor={Elioth Sanabria},
  pdfsubject={Research article},
  pdfkeywords={facility positioning, fleet sizing, distributionally robust regret optimization, worst-case regret, comparator bounds, demand and travel-time uncertainty}
}
\IfFileExists{macros/results.tex}{

\newcommand{\FoldANTrain}{150}
\newcommand{\FoldANTest}{150}
\newcommand{\FoldAVSaa}{2,653}

\newcommand{\FoldABand}{no}
\newcommand{\FoldAQaRho}{0.0167}
\newcommand{\FoldAQaPiE}{0.054}
\newcommand{\FoldAQaOmega}{323.1}
\newcommand{\FoldAQaShare}{0.045}
\newcommand{\FoldAQaSat}{no}

\newcommand{\FoldAQaVDrro}{2,700}
\newcommand{\FoldAQaVDro}{2,750}
\newcommand{\FoldAQaRoute}{low}

\newcommand{\FoldAQbRho}{0.0356}
\newcommand{\FoldAQbPiE}{0.101}
\newcommand{\FoldAQbOmega}{2106.7}
\newcommand{\FoldAQbShare}{0.156}
\newcommand{\FoldAQbSat}{no}

\newcommand{\FoldAQbVDrro}{2,800}
\newcommand{\FoldAQbVDro}{2,900}
\newcommand{\FoldAQbRoute}{gray}

\newcommand{\FoldAQcRho}{0.1244}
\newcommand{\FoldAQcPiE}{0.151}
\newcommand{\FoldAQcOmega}{7768.2}
\newcommand{\FoldAQcShare}{0.386}
\newcommand{\FoldAQcSat}{yes}

\newcommand{\FoldAQcVDrro}{2,900}
\newcommand{\FoldAQcVDro}{3,050}
\newcommand{\FoldAQcRoute}{high}

\newcommand{\FoldADrroSaa}{3713.0}
\newcommand{\FoldADrroSaaLo}{2677.3}
\newcommand{\FoldADrroSaaHi}{4652.1}

\newcommand{\FoldADroSaa}{6765.6}
\newcommand{\FoldADroSaaLo}{4765.7}
\newcommand{\FoldADroSaaHi}{8628.5}

\newcommand{\FoldADroDrro}{3052.6}
\newcommand{\FoldADroDrroLo}{2346.7}
\newcommand{\FoldADroDrroHi}{3687.6}

\newcommand{\FoldARtestStar}{1711.3}
\newcommand{\FoldAVStar}{2,450}
\newcommand{\FoldACovered}{yes}
\newcommand{\FoldAWone}{0.0591}
\newcommand{\FoldAInBall}{no}

\newcommand{\FoldBNTest}{150}
\newcommand{\FoldBVSaa}{2,423}

\newcommand{\FoldBBand}{no}
\newcommand{\FoldBQaRho}{0.0211}
\newcommand{\FoldBQaPiE}{0.053}
\newcommand{\FoldBQaOmega}{273.5}
\newcommand{\FoldBQaShare}{0.043}
\newcommand{\FoldBQaSat}{no}

\newcommand{\FoldBQaVDrro}{2,450}
\newcommand{\FoldBQaVDro}{2,550}
\newcommand{\FoldBQaRoute}{low}

\newcommand{\FoldBQbRho}{0.0440}
\newcommand{\FoldBQbPiE}{0.101}
\newcommand{\FoldBQbOmega}{2027.9}
\newcommand{\FoldBQbShare}{0.166}
\newcommand{\FoldBQbSat}{no}

\newcommand{\FoldBQbVDrro}{2,550}
\newcommand{\FoldBQbVDro}{2,650}
\newcommand{\FoldBQbRoute}{gray}

\newcommand{\FoldBQcRho}{0.1247}
\newcommand{\FoldBQcPiE}{0.143}
\newcommand{\FoldBQcOmega}{6658.2}
\newcommand{\FoldBQcShare}{0.387}
\newcommand{\FoldBQcSat}{yes}

\newcommand{\FoldBQcVDrro}{2,600}
\newcommand{\FoldBQcVDro}{2,750}
\newcommand{\FoldBQcRoute}{high}

\newcommand{\FoldBDrroSaa}{3302.0}
\newcommand{\FoldBDrroSaaLo}{1962.4}
\newcommand{\FoldBDrroSaaHi}{4337.8}

\newcommand{\FoldBDroSaa}{6365.9}
\newcommand{\FoldBDroSaaLo}{4258.2}
\newcommand{\FoldBDroSaaHi}{8179.3}

\newcommand{\FoldBDroDrro}{3063.9}
\newcommand{\FoldBDroDrroLo}{2182.0}
\newcommand{\FoldBDroDrroHi}{3705.3}

\newcommand{\FoldBRtestStar}{3234.0}
\newcommand{\FoldBVStar}{2,200}
\newcommand{\FoldBCovered}{no}
\newcommand{\FoldBWone}{0.0611}
\newcommand{\FoldBInBall}{no}

\newcommand{\FoldDNTest}{150}
\newcommand{\FoldDVSaa}{1,026}

\newcommand{\FoldDBand}{yes}
\newcommand{\FoldDQaRho}{0.0179}
\newcommand{\FoldDQaPiE}{0.052}
\newcommand{\FoldDQaOmega}{54.5}
\newcommand{\FoldDQaShare}{0.020}
\newcommand{\FoldDQaSat}{no}

\newcommand{\FoldDQaVDrro}{1,026}
\newcommand{\FoldDQaVDro}{1,050}
\newcommand{\FoldDQaRoute}{low}

\newcommand{\FoldDQbRho}{0.0402}
\newcommand{\FoldDQbPiE}{0.103}
\newcommand{\FoldDQbOmega}{566.4}
\newcommand{\FoldDQbShare}{0.106}
\newcommand{\FoldDQbSat}{no}

\newcommand{\FoldDQbVDrro}{1,050}
\newcommand{\FoldDQbVDro}{1,100}
\newcommand{\FoldDQbRoute}{gray}

\newcommand{\FoldDQcRho}{0.1386}
\newcommand{\FoldDQcPiE}{0.151}
\newcommand{\FoldDQcOmega}{2571.3}
\newcommand{\FoldDQcShare}{0.331}
\newcommand{\FoldDQcSat}{yes}

\newcommand{\FoldDQcVDrro}{1,100}
\newcommand{\FoldDQcVDro}{1,150}
\newcommand{\FoldDQcRoute}{high}

\newcommand{\FoldDDrroSaa}{$-$226.6}
\newcommand{\FoldDDrroSaaLo}{$-$550.2}
\newcommand{\FoldDDrroSaaHi}{12.0}

\newcommand{\FoldDDroSaa}{$-$345.0}
\newcommand{\FoldDDroSaaLo}{$-$983.2}
\newcommand{\FoldDDroSaaHi}{293.7}

\newcommand{\FoldDDroDrro}{$-$118.5}
\newcommand{\FoldDDroDrroLo}{$-$613.6}
\newcommand{\FoldDDroDrroHi}{366.0}

\newcommand{\FoldDRtestStar}{345.0}
\newcommand{\FoldDVStar}{1,100}
\newcommand{\FoldDCovered}{yes}
\newcommand{\FoldDWone}{0.0295}
\newcommand{\FoldDInBall}{yes}

\newcommand{\SandyNTest}{132}
\newcommand{\SandyVSaa}{4,418}

\newcommand{\SandyBand}{no}
\newcommand{\SandyQaRho}{0.0229}
\newcommand{\SandyQaPiE}{0.052}
\newcommand{\SandyQaOmega}{923.2}
\newcommand{\SandyQaShare}{0.082}
\newcommand{\SandyQaSat}{no}

\newcommand{\SandyQaVDrro}{4,550}
\newcommand{\SandyQaVDro}{4,600}
\newcommand{\SandyQaRoute}{low}
\newcommand{\SandyQbRho}{0.0644}
\newcommand{\SandyQbPiE}{0.102}
\newcommand{\SandyQbOmega}{6386.6}
\newcommand{\SandyQbShare}{0.292}
\newcommand{\SandyQbSat}{no}

\newcommand{\SandyQbVDrro}{4,700}
\newcommand{\SandyQbVDro}{4,900}
\newcommand{\SandyQbRoute}{high}
\newcommand{\SandyQcRho}{0.1440}
\newcommand{\SandyQcPiE}{0.121}
\newcommand{\SandyQcOmega}{10508.4}
\newcommand{\SandyQcShare}{0.404}
\newcommand{\SandyQcSat}{yes}

\newcommand{\SandyQcVDrro}{4,700}
\newcommand{\SandyQcVDro}{4,900}
\newcommand{\SandyQcRoute}{high}

\newcommand{\SandyRtestStar}{1107.3}
\newcommand{\SandyVStar}{4,250}
\newcommand{\SandyCovered}{yes}
\newcommand{\SandyWone}{0.0297}
\newcommand{\SandyInBall}{yes}

\newcommand{\ExclSixteen}{0}
\newcommand{\RowsSixteen}{755}
\newcommand{\ExclSeventeen}{48}
\newcommand{\RowsSeventeen}{803}
\newcommand{\ExclEighteen}{100}
\newcommand{\RowsEighteen}{855}
\newcommand{\ExclNineteen}{84}
\newcommand{\RowsNineteen}{844}
\newcommand{\ExclTwentyTwo}{18}
\newcommand{\RowsTwentyTwo}{773}
\newcommand{\ExclTwentyThree}{8}
\newcommand{\RowsTwentyThree}{768}
\newcommand{\FleetUnitMin}{360}
\newcommand{\EmsCircuity}{1.35}
\newcommand{\EmsCrossMean}{41.7}
\newcommand{\EmsFleetCost}{42.75}

\newcommand{\SandyQbMass}{0.140}

\newcommand{\SandyTwDrroSaa}{3545.8}
\newcommand{\SandyTwDrroSaaLo}{153.1}
\newcommand{\SandyTwDrroSaaHi}{7171.1}
\newcommand{\SandyTwDroSaa}{8609.7}
\newcommand{\SandyTwDroSaaLo}{4763.0}
\newcommand{\SandyTwDroSaaHi}{14449.9}
\newcommand{\SandyTwDroDrro}{5063.9}
\newcommand{\SandyTwDroDrroLo}{3777.6}
\newcommand{\SandyTwDroDrroHi}{6676.6}

\newcommand{\SandyTwAvoided}{0.588}
\newcommand{\SandyTwAvoidedLo}{0.480}
\newcommand{\SandyTwAvoidedHi}{0.854}

\newcommand{\TaxiPosNY}{5,000}

\newcommand{\TaxiPosQbShare}{0.008}

\newcommand{\TaxiPosQbRoute}{low}

\newcommand{\KapOneQbShare}{0.008}
\newcommand{\KapOneQbRoute}{low}

\newcommand{\KapFiveQbShare}{0.011}
\newcommand{\KapFiveQbRoute}{low}

\newcommand{\KapTenQbShare}{0.018}
\newcommand{\KapTenQbRoute}{low}

\newcommand{\KapTwentyQbShare}{0.024}
\newcommand{\KapTwentyQbRoute}{low}

\newcommand{\JonasNTrain}{150}
\newcommand{\JonasNY}{982}

\newcommand{\JonasShift}{(0.2,0.5,$-$1.8,0.8,0.4)}

\newcommand{\JonasQbShareFull}{0.368}

\newcommand{\JonasQbShareAlloc}{0.106}

\newcommand{\JonasVDro}{164}
\newcommand{\JonasVZfull}{164}

\newcommand{\JonasQbRouteFull}{high}
\newcommand{\JonasQbRouteAlloc}{gray}

\newcommand{\JonasNTest}{60}

\newcommand{\JonasDroSaa}{264.8}
\newcommand{\JonasDroSaaLo}{$-$298.0}
\newcommand{\JonasDroSaaHi}{690.1}

\newcommand{\JonasZfullSaa}{264.8}
\newcommand{\JonasZfullSaaLo}{$-$281.8}
\newcommand{\JonasZfullSaaHi}{632.8}

\newcommand{\IdaNY}{968}

\newcommand{\IdaShift}{($-$0.5,0.1,$-$0.5,0.8,0.1)}

\newcommand{\IdaQbShareFull}{0.421}

\newcommand{\IdaQbShareAlloc}{0.133}

\newcommand{\IdaVDrro}{144}
\newcommand{\IdaVDro}{160}
\newcommand{\IdaVZfull}{128}

\newcommand{\IdaQbRouteFull}{high}
\newcommand{\IdaQbRouteAlloc}{gray}

\newcommand{\IdaNTest}{60}
\newcommand{\IdaDrroSaa}{$-$32.8}
\newcommand{\IdaDrroSaaLo}{$-$66.5}
\newcommand{\IdaDrroSaaHi}{$-$5.0}
\newcommand{\IdaDroSaa}{$-$59.7}
\newcommand{\IdaDroSaaLo}{$-$479.1}
\newcommand{\IdaDroSaaHi}{322.7}

\newcommand{\IdaZfullSaa}{1538.6}
\newcommand{\IdaZfullSaaLo}{739.4}
\newcommand{\IdaZfullSaaHi}{2201.8}

}{}
\newcommand{\qA}{0.05}\newcommand{\qB}{0.10}\newcommand{\qC}{0.20}
\newcommand{\fpRatio}{0.25}\newcommand{\tauLow}{0.10}\newcommand{\tauHigh}{0.25}\newcommand{\nBoot}{200}
\newcommand{\FleetDepots}{8}\newcommand{\FleetRegions}{24}\newcommand{\FleetBlocksPerDay}{5}
\newcommand{\EmsBoroughs}{5}\newcommand{\EmsBlocksPerDay}{4}\newcommand{\EmsUnitMin}{360}
\newcommand{\EmsSvcMin}{30}\newcommand{\EmsPenHigh}{171}\newcommand{\EmsPenLow}{85.5}\newcommand{\EmsMph}{25}

\fancypagestyle{plain}{\fancyhf{}\fancyfoot[C]{\small\thepage}}

\newtheorem{theorem}{Theorem}
\newtheorem{proposition}[theorem]{Proposition}
\newtheorem{corollary}[theorem]{Corollary}
\newtheorem{lemma}[theorem]{Lemma}
\theoremstyle{definition}
\newtheorem{example}{Example}[section]

\theoremstyle{remark}
\newtheorem{remark}{Remark}

\newcommand{\E}{\mathbb{E}}
\newcommand{\Pp}{\mathbb{P}}
\newcommand{\R}{\mathbb{R}}
\newcommand{\cU}{\mathcal{U}}

\newcommand{\cY}{\mathcal{Y}}
\newcommand{\cC}{\mathcal{C}}
\newcommand{\wh}{\widehat}
\newcommand{\sd}{\operatorname{sd}}
\newcommand{\norm}[1]{\left\lVert #1\right\rVert}
\newcommand{\CertLow}{\textsc{confirmed low}}
\newcommand{\CertHigh}{\textsc{confirmed high}}

\title{\vspace{-1.2cm}\textbf{Protect the Decision, Not the Disaster:}\\[0.15cm]
\large Regret-Robust Capacity Sizing and Positioning with Transportation Recourse}
\ifms
  \author{Anonymous Author}
\else
\author{Elioth Sanabria\\[0.05cm]
  \small Department of Decision and Technology Analytics\\[-0.02cm]
  \small Lehigh University -- College of Business\\[-0.02cm]
  \small \texttt{els626@lehigh.edu}}
\fi
\date{\small October 2026}

\begin{document}
\maketitle

\begin{abstract}
\noindent
An operator who positions mobile resources before a disruption---ambulances at posts, vehicles at depots, stock at stations---faces two uncertainties that are usually treated as one: how the joint state of demand and travel conditions will vary, and which part of the resulting cost the positioning decision can actually change. Standard distributionally robust optimization (DRO) protects total cost, and therefore spends much of its protection on disruption costs that every feasible design would pay. This paper evaluates the worst-case regret of a design and compares it with the worst-case premium of total-cost DRO, and shows that the regret-to-premium ratio can be bounded before any robust model is solved: from marginals for the dependence question, and from single-scenario optima and one transport linear program for the regret question. The worst-case law is a relabelling of the operating record, reassigning every recorded period wholesale to another recorded period, except at most one, which is split. The comparator that exposes the largest worst-case regret is found by a single mixed-integer program. We derive an explicit radius where the sample-average design is exactly regret-robust. For finite scenario and design sets we compute a design's worst-case regret exactly, bound it from quantities a planner already has, and show on fleet and EMS data how law-relative regret changes capacity and positioning choices relative to sample-average and worst-cost planning. We apply the decision tree to several years of New York fleet data, including the year before Hurricane Sandy, and to two emergency-service events. The test flags the pre-Sandy year as exposed. The training-support
regret bounds cover realized test regret in three of four fleet folds.
On Ida, where the regret-robust allocation differed from the sample-average one, it moved units toward the boroughs the storm loaded and cost less than the sample-average design. 
\end{abstract}
\vspace{0.25cm}
\noindent\textbf{Keywords:} facility positioning; fleet sizing; distributionally robust regret optimization; worst-case regret; comparator bounds; demand uncertainty; travel-time uncertainty.

\setlength{\parskip}{0.35em}

\section{Introduction}
\label{sec:intro}

Facility positioning is an attempt to buy optionality before the state of the world is known. A delivery company decides which depots to use before tomorrow's orders and congestion are observed. An emergency-service operator posts units before calls arrive and before the roads that govern response become congested. A retailer consolidates warehouses before learning where demand will migrate and which corridors a storm will slow. In each case capacity is placed in space, demand appears randomly, and serving it consumes a random amount of traveling time.

A standard response to these uncertainties is to build a robust positioning model: write down a set of plausible laws for demand and travel, protect the design against the worst of them, and solve. But two questions can be settled before that model is run, and the data needed for both are already in the operator's hands. First, whether the unknown coupling between demand and travel conditions---how busy the city is when its roads are slow---has any power to change the positioning decision at all. Second, how large is a design's worst-case regret relative to the worst-case premium a robust model would charge it? We call that second quantity the \emph{regret-to-premium ratio}.

This is relevant because a disruption contains costs that no design can avoid. Snow slows every route; a regional emergency increases the workload of all service nodes. Standard distributionally robust optimization (DRO) protects total cost, and therefore spends much of its protection on this shared part. The point is easiest to see in a single operating period. Suppose two positioning plans are evaluated on the same storm day. If the storm imposes a common loss of one million dollars on both plans, the operational comparison between them has not changed by one dollar---yet a robust model that maximizes each plan's total cost separately may redirect its adversary toward that storm and change the allocation on the strength of a cost that neither plan can influence. Robustness then protects the scale of the disaster rather than the consequence of the decision.

Everything in the paper follows from the regret primitive. For feasible designs $y$ and $z$ and joint state $\xi$ of demand and travel, the cost difference
\begin{equation}
    \Delta_{y,z}(\xi):=C(y,\xi)-C(z,\xi)
    \label{eq:primitive-intro}
\end{equation}
asks a narrower question than total cost: in the same world, how much did choosing $y$ rather than $z$ add? It preserves the joint demand--travel state, so it cannot be computed from separate forecasts; it cancels every cost common to the two designs; and, under Wasserstein ambiguity, it prices \emph{relative} rather than absolute exposure. Distributionally robust regret optimization (DRRO) minimizes the worst case of this differential against the best feasible comparator. The worst-case regret of a design is the largest expected loss, under any plausible law, relative to the best design under that law; we report it alongside the worst-case premium of total-cost DRO.

The contribution is centered on the more fundamental question: when does it actually make sense to solve a robust model. Distributionally robust location models already provide reformulations, guarantees, and algorithms \citep{saif2021,shen2025}. Our question is prior to those developments: \emph{what can be confirmed about a design's regret before a robust model is solved?} The answer takes the form of a decision tree (Figure~\ref{fig:tree}) whose nodes are computations with known costs and whose leaves are actions. Its theory is developed in Sections~\ref{sec:model}--\ref{sec:cert2}, its operation in Section~\ref{sec:tree}, a worked instance in Section~\ref{sec:two-post}, and its behavior on several years of open fleet data and two emergency-service events in Section~\ref{sec:empirics}.

\subsection{Contributions}
\label{sec:contributions}

\begin{itemize}[leftmargin=1.6em]
    \item \textbf{Two accountings of the robust premium.} The premium of any design satisfies $\Pi=\Omega+\Sigma$ by definition; $\Sigma$ need not be nonnegative. A decision-independent cost leaves regret unchanged under every law, yet a sufficiently large common cost makes total-cost DRO select as if one particular law were known, and can lead it to a design whose worst-case regret is arbitrarily larger (Theorem~\ref{thm:common-mode}); the same reversal arises inside the recourse model from nothing more than the choice of a cost reference (Corollary~\ref{cor:recourse-reversal}).
    \item \textbf{Tests computable before any robust solve.} A dependence test (Theorem~\ref{thm:marginal-threshold}) decides from marginal distributions whether unresolved demand--travel dependence can change the allocation, with an exact decision price (Proposition~\ref{prop:decision-price}). A regret-to-premium-ratio test (Proposition~\ref{prop:sandwich}) bounds the ratio from below by a comparator pool and from above by a single transport LP after $S\times|\cY|$ recourse solves, with a Lipschitz bound and a radius-free ratio as efficient fallbacks.
    \item \textbf{Interpretable adversarial structure.} Worst-case regret is convex in the law and attained at a vertex of the transport polytope: the worst-case law relabels every recorded period wholesale except at most one, which is split (Proposition~\ref{prop:vertex}). Exact separation is a single mixed-integer program (Theorem~\ref{thm:separation}). Below a sharp radius $\rho^\sharp$, the sample-average design is exactly regret-robust (Theorem~\ref{thm:saa-sharp}), and $\Omega$ and $\Pi$ are piecewise linear in the radius (Theorem~\ref{thm:law-radius}). A window lemma (Section~\ref{sec:binding-band}) shows which coordinates of a design can carry regret and explains the ordering of the sample-average, regret-robust, and total-cost-robust fleets observed in Section~\ref{sec:empirics}.
    \item \textbf{A decision-making tree.} The tests organize into a screening sequence whose verdicts are bound statements, with a routing guarantee (Proposition~\ref{prop:tree-correctness}); a transfer theorem states what a test promises about the true law and what else is needed, in particular, that a test of exposure says nothing about the direction of a shift (Theorem~\ref{thm:transfer}). Section~\ref{sec:direction} shows how context and a one-sided ambiguity set can be used to enhance the allocation.
    \item \textbf{Empirical evidence.} Section~\ref{sec:empirics} applies the decision-making tree to open fleet data over several training--test years, including the winter before Hurricane Sandy, and to two emergency-service events, and reports every registered prediction against its out-of-sample outcome.
\end{itemize}

\subsection{Related literature}
\label{sec:literature}

\textit{Positioning under uncertainty.} Facility location connects long-lived positioning decisions to downstream transportation and allocation \citep{daskin1995,owen1998}; stochastic and robust formulations are surveyed by \citet{snyder2006}. Robust and distributionally robust models have incorporated uncertain demand, facility availability, correlations, and mobile capacity \citep{baron2011,gourtani2020,shehadeh2023}; Wasserstein models for capacitated location develop exact and approximate methods \citep{saif2021}, and \citet{shen2025} study reliable uncapacitated location under Kolmogorov ambiguity. For emergency services, the covering tradition runs from the maximal covering location problem \citep{church1974} through MEXCLP \citep{daskin1983} and its corrections \citep{batta1989}, the hypercube model \citep{larson1974}, dynamic relocation \citep{gendreau2001,rajagopalan2008}, location with random travel times \citep{ingolfsson2013}, survival-maximizing objectives \citep{erlut2009}, dispatch under priority errors \citep{mclay2009,mayorga2013}, and the survey of \citet{brotcorne2003}.

\textit{Dependence uncertainty.} That accurate marginals do not determine a multivariate law is classical: Tchen \citeyearpar{tchen1980} established the extremal couplings of sums with fixed marginals and, by rearrangement, the product bounds; Makarov \citeyearpar{makarov1981} and R\"uschendorf \citeyearpar{ruschendorf1982} the bounds on sums; Williamson and Downs \citeyearpar{williamson1990} the dependency bounds, and the comonotone calculus of \citet{dhaene2002} the attaining pairings; \citet{puccetti2012} and \citet{embrechts2013} develop the modern program for risk aggregation. Natarajan, Teo and Zheng \citeyearpar{natarajan2009} and Doan and Natarajan \citeyearpar{doan2012} brought these bounds into distributionally robust discrete optimization. Our contribution to this line is operational: the bounds answer a \emph{decision} question for demand--travel workloads, with simultaneous attainment across lanes that share a demand variable.

\textit{Regret and distributional robustness.} Minimax regret goes back to \citet{savage1951}; operations applications include the newsvendor \citep{perakis2008,chen2021regret} and robust discrete location \citep{kouvelis1997,aissi2009,averbakh1997,averbakh2000}; \citet{agarwal2022} optimize minimax regret under distribution shift for robust machine learning. Soft robustness \citep{bental2010} evaluates deviation from model-optimal values; distributionally robust regret formulations include \citet{cho2024}, \citet{poursoltani2022}, and \citet{poursoltani2023}. The benchmark differs across these works: our comparator knows the law but not the state, so regret is law-relative; formulations that subtract the state-optimal cost measure ex-post regret, and normalized prescriptiveness measures use a further reference. \citet{poursoltani2022} is the closest computational neighbor through its adjustable reformulations. More precisely, Fiechtner and Blanchet \citeyearpar{fiechtner2025} develop Wasserstein DRRO and \citet{fiechtner2026} its multistage analogue. \citet{fiechtner2025} analyze the \emph{criterion}: for smooth problems with a unique empirical optimizer, the DRRO value collapses at rate $o(\rho)$ in the radius while DRO moves at $O(\rho)$. \citet{fiechtner2025} also show exact all-radius optimality of the empirical optimizer in a quadratic model; that result concerns coincidence of the optimizing decision, possibly with positive regret, whereas Theorem~\ref{thm:saa-sharp} asserts zero regret on a neighborhood of laws for finite design sets. We take the criterion as given and study what precedes it for \emph{discrete} design sets: with finitely many feasible designs the nearest competitor sits at a positive distance when the empirical optimizer is unique, so the smooth collapse becomes a hard threshold $\rho^\sharp$ below which the selected SAA design is a DRRO optimizer; and the operational reason regret is the right criterion for positioning---it removes exactly the cost no feasible design can affect---yields certificates computable before any robust model is run.

\textit{Prescriptive analytics and side information.} Separately weighted SAA is due to \citet{bertsimas2020predictive}, conditional estimation to \citet{nguyen2020local}, trimmings to \citet{estebanperez2022side}, and residual-centered DRO to \citet{kannan2024residuals}; Section~\ref{sec:direction} shows that the certificate machinery passes through this change unaltered and that a one-sided ambiguity set lets a directional forecast pick the side of the hedge.

\textit{DRO methodology.} Moment ambiguity sets are treated by \citet{delage2010}; Wasserstein duality by \citet{esfahani2018} and \citet{gao2023}; marginal-constrained sets by \citet{gao2017marginals}; moments-based disjunctive formulations by \citet{natarajan2014}.

\section{The Regret-Robust Positioning Model}
\label{sec:model}
In this section we present the general mobile positioning model, as well as the regret robust formulations. We show how regret-robust optimization differs from robust optimization in this class of problems.

\subsection{The positioning model}
\label{sec:primitive}

Consider demand regions $J=\{1,\ldots,n\}$ and candidate posts $I=\{1,\ldots,m\}$. These can, for example, describe a city divided into $n$ neighborhoods with $m$ candidate posts for semi-trucks used for last-mile delivery or the positioning of ambulances in a city. Demand in region $j$ is the nonnegative random variable $D_j$; travel time from post $i$ to region $j$ is the nonnegative random variable $T_{ij}$; serving one unit of demand for one unit of travel costs $\gamma_{ij}>0$, say gas or other costs. Throughout the model, demand in region $j$ takes values in $[d_j^{\ell},d_j^{u}]$ and travel time on lane $(i,j)$ in $[t_{ij}^{\ell},t_{ij}^{u}]$, all finite and nonnegative. Marginal laws may be discrete, continuous, or mixed; the joint law $P$ is unrestricted and no independence is assumed.

The typical practice of forecasting demand and travel times separately ignores the fact that in most transportation problems the objective contains their product: if region $j$ generates demand $D_j$ and requires travel $T_{ij}$ from post $i$, the workload placed on the lane is proportional to the product $D_jT_{ij}$. A storm can move both factors at once---more calls arrive precisely when the roads slow down---and the product is the cost of the system. Separate forecasts are its marginals, and these alone cannot reconstruct the probabilistic pairing.

A first-stage decision $y=(y_1,\ldots,y_m)$ assigns a capacity $y_i\ge0$ to each post, with $y_i=0$ closing post $i$. A special case is $y_i\in\{0,1\}$ denoting whether a vehicle or warehouse is placed at post $i\in I$. Let the joint state of the demand and travel times be $\xi=(d,t)$, where $d=(d_1,\ldots,d_n)$ is the realized demand vector and $t=(t_{ij})$ the realized travel times, flow $q_{ij}$ is allocated on lane $(i,j)$: each unit served incurs cost $\ell_{ij}(t_{ij})$ and consumes $a_{ij}(t_{ij})$ of the post's capacity, and unmet demand $u_j$ is penalized at $p_j$ per unit:
\begin{subequations}
\label{eq:recourse}
\begin{align}
Q(y;d,t)=\min_{q,u\geq0}\quad &
    \sum_{i,j}\ell_{ij}(t_{ij})q_{ij}+\sum_jp_ju_j,\label{eq:recourse-obj}\\
\text{s.t.}\quad
    &\sum_iq_{ij}+u_j=d_j, &&j\in J,\label{eq:demand-balance}\\
    &\sum_ja_{ij}(t_{ij})q_{ij}\leq y_i, &&i\in I.\label{eq:resource-balance}
\end{align}
\end{subequations}
Let $F(y)$ collect fixed and capacity costs; total realized cost is
\begin{equation}
    C(y,\xi)=F(y)+Q(y;\xi).
    \label{eq:general-cost}
\end{equation}
Assume $p_j>0$, $\ell_{ij}(t)\geq0$, and $a_{ij}(t)>0$ with finite continuous coefficients on the bounded travel intervals. Positive resource requirements prohibit flow through a closed post; the shortage variable gives extreme states an operational meaning and guarantees complete recourse. In warehouse fulfillment one takes $a_{ij}(t)=1$ and $\ell_{ij}(t)=\gamma_{ij}t$; for mobile resources $a_{ij}(t)$ represents the vehicle time consumed by travel and service. This problem represents a very general family of transportation problems including last-mile positioning and routing of vehicles, ambulance placement and so on.

We study two special cases of interest. In a \emph{stable-territory} design, a fixed fraction $r_{ij}$ of each region's demand is served by each open post; the allocation pattern does not adapt to the realized state, and cost reduces to the affine form $C(y,Z)=c_y+a_y^\top Z$ in lane workloads $Z_{ij}=D_jT_{ij}$, with $(a_y)_{ij}=\gamma_{ij}r_{ij}$. This reduction does not assume independence; it preserves the product as the physical work the network performs. In the aggregate capacity model used for the fleet instance, a scalar fleet $V$ is chosen and cost is stipulated as $fV+p\,(R(\xi)-V)^+$. This is a separate model, not a reduction of \eqref{eq:recourse}; Section~\ref{sec:binding-band} explains why these instances behave differently from the others.

For any $y,z\in\cY$, the pairwise differential $\Delta_{y,z}(\xi)=C(y,\xi)-C(z,\xi)$ of \eqref{eq:primitive-intro} is the regret primitive. It is antisymmetric, $\Delta_{y,y}=0$, and invariant to every decision-independent additive cost: if $C'(y,\xi)=C(y,\xi)+A(\xi)$ then $\Delta'_{y,z}=\Delta_{y,z}$, whether $A$ is a constant, a random regional outage, or a nonlinear function of the complete network state.

When demand and travel time are treated as independent, the expected workload on lane $(i,j)$ is the product of average demand and average travel time, $\mu_j\tau_{ij}$. This corresponds to modeling the operation on an artificial operating day in which every lane sees average conditions simultaneously; a day on which the busiest region and the slowest road coincide, as a storm produces, is not captured by it.

\begin{proposition}[Covariance charge]
\label{prop:covariance-charge}
Under the stable-territory reduction with service fractions $r_{ij}$, let $P$ be any joint law of $(D,T)$ and let $P_\perp$ be the product law of its marginals. The expected cost of design $y$ under $P$ exceeds (or falls short of) its expected cost under $P_\perp$ by
\[
\E_P[C(y,Z)]-\E_{P_\perp}[C(y,Z)]
=\sum_{i,j}\gamma_{ij}r_{ij}\operatorname{Cov}_P(D_j,T_{ij})
=: \Gamma(r),
\qquad
|\Gamma(r)|\le\sum_{ij}\gamma_{ij}r_{ij}\,\sd(D_j)\sd(T_{ij}).
\]
\end{proposition}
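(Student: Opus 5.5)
The plan is to use linearity of expectation on the affine stable-territory form and then reduce each lane to a one-dimensional covariance identity. Under the reduction, $C(y,Z)=c_y+a_y^\top Z$ with $(a_y)_{ij}=\gamma_{ij}r_{ij}$ and $Z_{ij}=D_jT_{ij}$. The constant $c_y$ does not depend on the state, so it appears identically under $P$ and $P_\perp$ and cancels. This leaves
\[
\E_P[C(y,Z)]-\E_{P_\perp}[C(y,Z)]=\sum_{i,j}\gamma_{ij}r_{ij}\bigl(\E_P[D_jT_{ij}]-\E_{P_\perp}[D_jT_{ij}]\bigr).
\]
All expectations are finite because $D_j$ and $T_{ij}$ take values in the bounded intervals $[d_j^\ell,d_j^u]$ and $[t_{ij}^\ell,t_{ij}^u]$. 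The only facts about the laws used are their marginals and these bounds, so discrete, continuous, or mixed marginals are handled uniformly.

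For each lane, I would use that $P_\perp$ has the same marginals as $P$ and makes $D_j$ and $T_{ij}$ independent. Hence $\E_{P_\perp}[D_jT_{ij}]=\E[D_j]\E[T_{ij}]=\mu_j\tau_{ij}$, where the means are the same under either law. It follows that $\E_P[D_jT_{ij}]-\mu_j\tau_{ij}=\operatorname{Cov}_P(D_j,T_{ij})$. Summing over lanes with weights $\gamma_{ij}r_{ij}$ gives the identity defining $\Gamma(r)$.

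For the bound, I would apply the triangle inequality to the sum. The weights satisfy $\gamma_{ij}r_{ij}\ge0$, since $\gamma_{ij}>0$ and the service fractions are nonnegative, so no absolute value is needed on them. Then Cauchy--Schwarz on each term gives $|\operatorname{Cov}_P(D_j,T_{ij})|\le\sd(D_j)\sd(T_{ij})$, and the standard deviations are again determined by the marginals.

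The main care point is bookkeeping rather than analysis. One must confirm that $c_y$ and the coefficients $a_y$ are state-independent, which is exactly what makes the reduction affine. One must also confirm that only pairwise products $D_jT_{ij}$ enter, so the dependence between different lanes plays no role in the expectation. Everything else is routine.
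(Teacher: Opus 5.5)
Your proposal is correct and follows essentially the same argument as the paper. Both use linearity to reduce to lanewise $\E_P[D_jT_{ij}]-\mu_j\tau_{ij}=\operatorname{Cov}_P(D_j,T_{ij})$, with finiteness from the bounded marginals, and both finish with termwise Cauchy--Schwarz. Your explicit remark that the weights $\gamma_{ij}r_{ij}$ are nonnegative is what justifies the triangle-inequality step, which the paper leaves implicit.
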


The proposition quantifies how much expected cost a given plan loses when the pairing between demand and travel is replaced by independence. The magnitude, and whether that loss is large enough to change which plan is best, is explored in Section~\ref{sec:cert1}.

\subsection{Two robust objectives}
\label{sec:two-accountings}

The joint state gives the operator faithful scenarios. What it does not settle is what to protect against. One natural objective is to minimize the worst-case total operating cost: protect the bill. Another is to minimize the worst-case cost advantage that a different design would have enjoyed in the same state: protect the decision. These are not necessarily the same. A storm that imposes a large common cost on every design inflates the total-cost objective without changing which design is best, and a cheaper disruption can expose a costly positioning mistake that the total-cost objective overlooks.

Let $\cU$ be a set of plausible laws on the network state, $\cY$ a nonempty compact design set, and $C$ measurable in $\xi$ and continuous in $y$ with an integrable envelope. Standard distributionally robust optimization (DRO) chooses the design whose worst expected cost over $\cU$ is smallest:
\begin{equation}
V_{\mathrm{DRO}}=\min_{y\in\cY}\sup_{P\in\cU}\E_P[C(y,\xi)].
\label{eq:dro}
\end{equation}
To isolate the part of this cost that is due to the positioning choice rather than the operating environment, compare $y$ against the best design that could have been chosen under the same law. For a fixed $P$, write $C^*(P)=\min_{z\in\cY}\E_P[C(z,\xi)]$ for the best achievable expected cost and define the regret of $y$ under $P$ as
\begin{equation}
R(y;P)=\E_P[C(y,\xi)]-C^*(P).
\label{eq:regret}
\end{equation}
Regret is zero when $y$ is optimal under $P$, regardless of how expensive every feasible design is to operate. Distributionally robust regret optimization (DRRO) minimizes the worst-case regret:
\begin{equation}
V_{\mathrm{DRRO}}=\min_{y\in\cY}\sup_{P\in\cU}R(y;P)
=\min_{y\in\cY}\sup_{P\in\cU}\max_{z\in\cY}\E_P[\Delta_{y,z}(\xi)].
\label{eq:drro}
\end{equation}
The comparator $z$ knows the law $P$ but not the realized state: it commits to a single design before $\xi$ is revealed and faces the same feasibility and service constraints as $y$. Evaluating both designs under the same law is essential; subtracting their separate worst-case costs would compare different operating environments and would not measure the loss from one positioning choice over another. Compactness of $\cY$ guarantees that minimizers exist in \eqref{eq:dro} and \eqref{eq:drro};\footnote{The continuous function $y\mapsto\sup_{P\in\cU}\E_PC(y,\xi)$ attains its minimum on the compact set $\cY$ by the extreme value theorem. Without compactness the infimum may be approached but not reached, as when a cheaper design requires opening an additional post whose capacity can be made arbitrarily small at a fixed per-unit cost.} the supremum over $\cU$ need not be attained without further structure on $\cU$.

The two objectives can be related through a decomposition. Fix a reference law $\wh P\in\cU$, taken to be the empirical law in data-driven settings. The \emph{worst-case premium} of design $y$ measures how much its expected cost can rise above the reference:
\begin{equation}
    \Pi(y):=\sup_{P\in\cU}\E_P[C(y,\xi)]-\E_{\wh P}[C(y,\xi)].
    \label{eq:premium}
\end{equation}
Since $\E_P[C(y,\xi)]=C^*(P)+R(y;P)$ for every $P$, the premium splits into two parts:
\begin{equation}
    \Pi(y)=\Omega(y)+\Sigma(y),
    \qquad
    \Omega(y):=\sup_{P\in\cU}R(y;P),
    \qquad
    \Sigma(y):=\Pi(y)-\Omega(y).
    \label{eq:premium-split}
\end{equation}
Here $\Omega(y)$ is the worst-case regret: the largest expected loss that a better feasible design could have avoided under some plausible law. The remainder $\Sigma(y)$ is what the premium exceeds that loss by. One should be careful not to read $\Sigma(y)$ as ``the cost every design would have paid'': the law that maximizes total cost and the law that maximizes regret need not be the same, and the best comparator under the regret law may itself be far from optimal under the total-cost law. $\Sigma$ is defined by the identity and need not be nonnegative. With two states, $\wh p=(\tfrac12,\tfrac12)$, $\cU$ the full simplex, $c_y=(2,1)$, $c_z=(0,4)$, the SAA design $y$ has $\Pi=0.5$ and $\Omega=2$, so $\Sigma=-1.5$; with $c_y=(1,1)$, $c_z=(0,3)$, $\Pi=0$ and $\Omega=1$. The ratio $\Omega/\Pi$ compares worst cases over possibly different laws and is used only when $\Pi>0$.

A key quantity is the ratio
\begin{equation}
    \omega(y):=\Omega(y)/\Pi(y),
    \label{eq:regret-to-premium-ratio}
\end{equation}
the \emph{regret-to-premium ratio}: the ratio of worst regret to worst premium. Computing $\Pi$ uses the output of a standard DRO model; computing $\Omega$ requires the separation problem of Section~\ref{sec:separation} or the bounds of Section~\ref{sec:sandwich}. Section~\ref{sec:cert2} shows that the ratio can be certifiably bounded, before any robust model is solved.

\subsection{How a common disruption cost redirects standard DRO}
\label{sec:common-mode}

A disruption can be expensive for reasons that no positioning decision can change. If a storm adds the same operating cost to every feasible design, then under a known law the addition has no effect on which design is preferred: every plan's cost rises by the same amount. Under distributional uncertainty the picture changes. The common expense can make one law dominate the worst-case calculation, so that total-cost DRO effectively optimizes for that law alone and selects a design on the strength of a cost that neither plan can influence. Regret is unaffected throughout, because the expense does not change the relative cost of any pair of designs under any law.

To make this precise, consider an ambiguity set generated by finitely many laws,
\begin{equation}
\cU=\operatorname{conv}\{P^1,\ldots,P^K\},
\qquad
c_k(y)=\E_{P^k}C(y,\xi),
\label{eq:finite-law-ambiguity}
\end{equation}
where each $P^k$ may have finite or continuous support. Let $h(\xi)$ be a measurable, decision-independent cost with $\sup_{P\in\cU}\E_P|h|<\infty$, and write $a_k=\E_{P^k}h$. The perturbed cost family is
\begin{equation}
C_\lambda(y,\xi)=C(y,\xi)+\lambda h(\xi),\qquad \lambda\geq0.
\label{eq:common-cost-family}
\end{equation}
The parameter $\lambda$ scales the common expense; for every $\lambda$ the statewise cost difference $C_\lambda(y,\xi)-C_\lambda(z,\xi)=C(y,\xi)-C(z,\xi)$ is unchanged.

\begin{theorem}[A common cost can determine the effective ambiguity set]
\label{thm:common-mode}
Let
\[
a^*=\max_{1\leq k\leq K}a_k,
\qquad
I_h=\{k:\,a_k=a^*\},
\qquad
G_h(y)=\max_{r\in I_h}c_r(y),
\]
so that $I_h$ indexes the laws under which the common expense is largest and $G_h(y)$ is the worst original operating cost among those laws. Define the threshold
\begin{equation}
\lambda_0=
\begin{cases}
\displaystyle\max_{k\notin I_h}\sup_{y\in\cY}\frac{[c_k(y)-G_h(y)]_+}{a^*-a_k}, & \text{if }I_h\ne\{1,\ldots,K\},\\[10pt]
0, & \text{otherwise.}
\end{cases}
\label{eq:common-cost-threshold}
\end{equation}
Then $\lambda_0<\infty$, and the following hold:
\begin{enumerate}[label=(\roman*),leftmargin=1.7em]
\item \textbf{Regret is invariant.} For every $\lambda\geq0$, the addition of $\lambda h$ does not change the regret of any decision under any law in $\cU$. The DRRO objective and its set of optimizers are therefore identical to those of the unperturbed problem.
\item \textbf{The effective ambiguity set collapses.} For every $\lambda>\lambda_0$ and every $y\in\cY$, the worst-case total cost takes the form
\begin{equation}
\sup_{P\in\cU}\E_P[C_\lambda(y,\xi)]=\lambda\,a^*+G_h(y),
\label{eq:effective-ambiguity}
\end{equation}
so that the DRO problem reduces to
\[
\arg\min_{y\in\cY}\max_{r\in I_h}\E_{P^r}[C(y,\xi)].
\]
In particular, when $I_h=\{r\}$ is a singleton, $P^r$ is the unique worst-case law for every decision and standard DRO selects exactly the minimizers of $\E_{P^r}[C(y,\xi)]$, as if that single law were known with certainty.
\end{enumerate}
\end{theorem}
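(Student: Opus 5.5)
The plan has three parts: finiteness of $\lambda_0$, part (i), and part (ii).

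\textbf{Finiteness of $\lambda_0$.} This needs each $\sup_{y\in\cY}[c_k(y)-G_h(y)]_+$ to be finite. By hypothesis $C$ is continuous in $y$ with an integrable envelope, so dominated convergence makes each $c_k(y)=\E_{P^k}C(y,\xi)$ continuous on the compact set $\cY$. Hence each $c_k$ is bounded. $G_h$ is a finite maximum of continuous functions, so it is also continuous and bounded, and $c_k-G_h$ is bounded above. For $k\notin I_h$ the denominator satisfies $a^*-a_k>0$. Since $\lambda_0$ is a maximum of finitely many finite nonnegative quantities, it is finite.

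\textbf{Part (i).} Fix $P\in\cU$. Then $\E_P|h|<\infty$, so $\E_P C_\lambda(y,\xi)=\E_P C(y,\xi)+\lambda\E_P h$ for every $y$. Taking the minimum over $z\in\cY$ gives $C^*_\lambda(P)=C^*(P)+\lambda\E_P h$. Subtracting, $R_\lambda(y;P)=R(y;P)$. Because this holds for every $y$ and every $P$, the objective $\sup_P R(y;P)$ is unchanged, and so are its minimizers. Equivalently, this is the statewise invariance of $\Delta_{y,z}$ noted after \eqref{eq:common-cost-family}, integrated against $P$.

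\textbf{Part (ii).} Write $P=\sum_k\theta_kP^k$ with $\theta$ in the simplex. The map $P\mapsto\E_P C_\lambda(y,\xi)$ is linear in $\theta$, so its supremum over the convex hull equals the maximum over the vertices:
\[
\sup_{P\in\cU}\E_P[C_\lambda(y,\xi)]=\max_k\bigl(c_k(y)+\lambda a_k\bigr).
\]
Over $r\in I_h$ the vertex values equal $\lambda a^*+c_r(y)$, whose maximum is $\lambda a^*+G_h(y)$. It remains to show that every $k\notin I_h$ gives a strictly smaller value. Suppose $\lambda>\lambda_0$. By the definition of $\lambda_0$,
\[
c_k(y)-G_h(y)\le[c_k(y)-G_h(y)]_+\le\lambda_0(a^*-a_k)<\lambda(a^*-a_k).
\]
Rearranging gives $c_k(y)+\lambda a_k<G_h(y)+\lambda a^*$, which proves \eqref{eq:effective-ambiguity}. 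The DRO objective is therefore $\lambda a^*$ plus $G_h(y)$, and the additive constant does not affect minimizers. Hence DRO minimizes $\max_{r\in I_h}\E_{P^r}C(y,\xi)$.

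\textbf{Singleton case.} When $I_h=\{r\}$, the strict inequality shows that the maximum is attained only at the vertex $P^r$. Any mixture that puts positive weight $\theta_k$ on some $k\neq r$ loses $\theta_k$ times a strictly positive gap, so $P^r$ is the unique maximizer for every $y$. When $I_h$ is all of $\{1,\ldots,K\}$, we have $\lambda_0=0$ and the claim is immediate.

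The only step needing care is the finiteness of $\lambda_0$, that is, the uniform boundedness of the $c_k$ over $\cY$. This rests on continuity of $c_k$ on the compact set, obtained from the integrable envelope via dominated convergence. Without it, a supremum over $y$ could diverge.
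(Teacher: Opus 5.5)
Your proposal is correct and follows essentially the same route as the paper's proof. You get $\lambda_0<\infty$ from boundedness of the continuous $c_k$ on compact $\cY$, part (i) from $C^*_\lambda(P)=C^*(P)+\lambda\E_P h$, and part (ii) by reducing the supremum over the convex hull to the generators together with the strict inequality $c_k(y)+\lambda a_k<G_h(y)+\lambda a^*$ for $k\notin I_h$; your dominated-convergence justification of continuity and your explicit argument that any mixture with positive weight off $r$ loses strictly are small elaborations of steps the paper states tersely.
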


The index set $I_h$ collects the laws under which the common expense is largest. The threshold $\lambda_0$ in \eqref{eq:common-cost-threshold} compares two quantities for each excluded law: how much that law can still raise the original operating cost, and how much less common expense it carries. Once $\lambda$ exceeds $\lambda_0$, no advantage in the original cost can offset the smaller common bill, and the worst case must lie in $\operatorname{conv}\{P^r:r\in I_h\}$.

On a finite scenario space the result applies to every vertex of a polyhedral ambiguity set. Any vertex $p^r$ can be separated from the convex hull of the remaining vertices by a linear functional; shifting that functional by a constant makes it nonnegative, and Theorem~\ref{thm:common-mode} then shows that for a large enough common expense, $p^r$ becomes the sole law governing the total-cost choice. Two caveats limit the scope of this observation. If $\E_Ph$ is constant across $\cU$, the addition is a pure shift of every DRO objective and no reversal is possible. Second, even when $I_h$ is a proper subset, no reversal occurs if the same design already minimizes both the original and the perturbed objective. When a reversal does occur, however, its effect on the worst-case regret can be arbitrarily large, as the next corollary shows.

\begin{corollary}[An arbitrarily large increase in regret]
\label{cor:common-cost-regret}
For every $M>1$ there is a two-decision, two-state instance and a nonnegative decision-independent cost whose addition changes the unique standard-DRO optimizer to a decision with $M$ times the original optimizer's worst-case regret. All statewise cost differences remain unchanged.
\end{corollary}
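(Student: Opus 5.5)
The plan is to build the instance by hand. I would take two states, so that every law is a point $p$ of the simplex $\operatorname{conv}\{\delta_1,\delta_2\}$, and let $\cU$ be that whole simplex. This is an instance of \eqref{eq:finite-law-ambiguity} with $K=2$ and $P^k=\delta_k$. There are two decisions, $\cY=\{y,z\}$, with state-wise cost vectors $c_y$ and $c_z$.

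With only two decisions, the regret of $y$ under $p$ is $[\E_p(c_y-c_z)]_+$, and the regret of $z$ is $[\E_p(c_z-c_y)]_+$. Each is the positive part of a linear function of $p$, so its supremum over the simplex is attained at a vertex. Hence $\Omega(y)=\max_k[(c_y-c_z)_k]_+$, and symmetrically for $z$. This is the only computation of worst-case regret needed.

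I would fix the difference vector $c_y-c_z=(-M,1)$. Then $\Omega(y)=1$ and $\Omega(z)=M$, so the corollary reduces to one requirement: $y$ must be the unique DRO optimizer before the perturbation, and $z$ the unique one after it. A concrete choice is $c_y=(0,1)$ and $c_z=(M,0)$. Unperturbed, the worst-case costs are $\max(0,1)=1$ for $y$ and $\max(M,0)=M$ for $z$. Since $M>1$, $y$ is the unique DRO optimizer.

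For the common cost I would take $h=(0,1)$, which is nonnegative and concentrated on the state where $y$ is the worse design. Then $a=(0,1)$, so $I_h=\{2\}$ and $G_h(\cdot)=c_2(\cdot)$. Theorem~\ref{thm:common-mode}(ii) says that for $\lambda>\lambda_0$ the DRO problem minimizes $c_2$. Since $c_2(z)=0<1=c_2(y)$, the unique DRO optimizer becomes $z$.

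For a self-contained check I would also compute directly. The worst-case costs are $\max(0,1+\lambda)=1+\lambda$ for $y$ and $\max(M,\lambda)$ for $z$. For $\lambda>M$ the latter equals $\lambda<1+\lambda$, which confirms that $z$ is uniquely optimal. The threshold from \eqref{eq:common-cost-threshold} is $\lambda_0=\max_{w}[c_1(w)-c_2(w)]_+=M$, which is consistent with this.

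Finally I would invoke Theorem~\ref{thm:common-mode}(i). Since $h$ is decision-independent, the state-wise differences and hence both $\Omega$ values are unchanged by the perturbation. So the new DRO optimizer $z$ has worst-case regret $M=M\cdot\Omega(y)$, where $y$ is the original optimizer.

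There is no real obstacle here. The only care needed is to get uniqueness of both optimizers (strict inequalities, which hold because $M>1$ and $\lambda>M$) and to note that $\Omega(y)>0$, so that the ratio is meaningful.
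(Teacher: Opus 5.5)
Your construction is correct and is essentially the paper's. After swapping the state labels, your difference $c_y-c_z$ becomes $(1,-M)$, which is exactly the paper's $C(A)-C(B)$ for costs $(2,1)$ and $(1,M+1)$; in both, the unit common cost sits on the state where the original DRO winner is worse and $\lambda_0=M$, and the paper computes the regrets by parametrizing $q=P(1)$ where you use the vertex argument.
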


Figure~\ref{fig:common-cost-reversal} illustrates the construction with $M=100$ and a common expense of $1000$ in state~1. The tradeoff is transparent: decision $B$ saves one unit in state~1 but costs $M$ additional units in state~2. A large common expense in state~1 causes standard DRO to choose $B$ for that one-unit saving; the expense has changed which state controls the total-cost calculation, but it has not reduced the loss that $B$ causes in state~2. The unbounded factor arises across a family with increasing cost ranges; each individual instance has finite costs and finite regret.

\begin{figure}[htbp]
\centering
\resizebox{\textwidth}{!}{%
\begin{tikzpicture}[x=1cm,y=1cm,font=\fontsize{9}{11}\selectfont]
\definecolor{planblue}{RGB}{53,91,130}
\definecolor{planred}{RGB}{171,75,54}
\definecolor{softgray}{RGB}{246,245,242}
\tikzset{heading/.style={font=\fontsize{10}{12}\selectfont\bfseries,align=center},colhead/.style={font=\fontsize{8}{10}\selectfont,align=center},result/.style={font=\fontsize{10}{12}\selectfont\bfseries,align=center}}
\draw[rounded corners=5pt,draw=black!20,fill=softgray] (0,0) rectangle (6,4.2);
\node[heading,text width=5.4cm] at (3,3.76) {Before the added cost};
\node[text=black!60] at (3,3.30) {$M=100$};
\node[colhead] at (2.45,2.77) {State 1};
\node[colhead] at (3.65,2.77) {State 2};
\node[colhead] at (5.10,2.77) {Worst-case\\cost};
\draw[black!20] (.35,2.42) -- (5.65,2.42);
\fill[planblue!10,rounded corners=3pt] (.30,1.62) rectangle (5.70,2.22);
\node[anchor=west,text=planblue,font=\bfseries] at (.55,1.92) {Plan $A$};
\node at (2.45,1.92) {$2$};
\node at (3.65,1.92) {$1$};
\node[text=planblue,font=\bfseries] at (5.10,1.92) {$2$};
\node[anchor=west] at (.55,1.15) {Plan $B$};
\node at (2.45,1.15) {$1$};
\node at (3.65,1.15) {$101$};
\node at (5.10,1.15) {$101$};
\node[result,text=planblue] at (3,.43) {DRO chooses $A$};
\draw[->,thick,black!60] (6.25,2.10) -- (7.55,2.10);
\node[align=center,text width=1.5cm,font=\fontsize{8}{10}\selectfont] at (6.90,2.79) {Add $1000$\\in state 1};
\node[align=center,text width=1.5cm,text=black!60,font=\fontsize{8}{10}\selectfont] at (6.90,1.48) {Same cost\\for both plans};
\begin{scope}[xshift=7.8cm]
\draw[rounded corners=5pt,draw=black!20,fill=softgray] (0,0) rectangle (6,4.2);
\node[heading,text width=5.4cm] at (3,3.76) {After the added cost};
\node[text=black!60] at (3,3.30) {$\lambda=1000>M$};
\node[colhead] at (2.45,2.77) {State 1};
\node[colhead] at (3.65,2.77) {State 2};
\node[colhead] at (5.10,2.77) {Worst-case\\cost};
\draw[black!20] (.35,2.42) -- (5.65,2.42);
\node[anchor=west] at (.55,1.92) {Plan $A$};
\node at (2.45,1.92) {$1002$};
\node at (3.65,1.92) {$1$};
\node at (5.10,1.92) {$1002$};
\fill[planred!10,rounded corners=3pt] (.30,.85) rectangle (5.70,1.45);
\node[anchor=west,text=planred,font=\bfseries] at (.55,1.15) {Plan $B$};
\node at (2.45,1.15) {$1001$};
\node at (3.65,1.15) {$101$};
\node[text=planred,font=\bfseries] at (5.10,1.15) {$1001$};
\node[result,text=planred] at (3,.43) {DRO chooses $B$};
\end{scope}
\draw[rounded corners=5pt,draw=planblue!30,fill=planblue!4] (0,-3.05) rectangle (13.8,-.45);
\node[heading,text width=12.6cm] at (6.9,-.88) {The cost of choosing the wrong plan is unchanged};
\node[align=center,text width=5.7cm] at (3.45,-1.57) {State 1: $B$ saves $1$\\[3pt]State 2: $A$ saves $100$};
\draw[planblue!20] (6.9,-1.22) -- (6.9,-1.96);
\node[align=center,text width=5.7cm] at (10.35,-1.57) {Worst-case regret: $A=1$, $B=100$\\[3pt]\textcolor{planblue}{DRRO retains $A$}};
\draw[planblue!15] (.45,-2.15) -- (13.35,-2.15);
\node[align=center,text width=12.4cm,text=planred,font=\fontsize{9}{11}\selectfont\bfseries] at (6.9,-2.60) {DRO switches plans to save $1$ in worst-case total cost.\\[3pt]Its selected plan's worst-case regret rises from $1$ to $100$.};
\end{tikzpicture}%
}
\caption{The common-cost reversal of Corollary~\ref{cor:common-cost-regret} with $M=100$ and $\lambda=1000$. Adding the same expense to both plans in state 1 makes that state determine both worst-case total costs. DRO switches to $B$ although neither statewise cost difference changes; DRRO retains $A$. The ambiguity set contains all laws on the two states.}
\label{fig:common-cost-reversal}
\end{figure}

\section{The Dependence Test}
\label{sec:cert1}

An operator has two sets of forecasts: how busy each region gets, and how long each lane takes. What the operator usually lacks is the pairing: on the days when a region is busy, are its lanes fast or slow? This section asks whether that missing pairing can change the positioning decision, and gives a test that answers this from the marginals alone. A pass means no pairing consistent with the data makes a different design better; therefore no joint model is needed. A fail means the pairing could matter, and the test reports how much it could cost as an upper bound.

\subsection{The threshold-cost test}

Let $F_j$ and $G_{ij}$ be the marginal laws on the bounded intervals of Section~\ref{sec:model}, and let $\mathfrak C$ be the class of all joint laws with these marginals. The test works as follows. For each lane, compute the smallest and largest expected workload it can carry under any pairing of demand and travel consistent with the marginals. These two numbers bound what the unknown coupling can do to that lane's cost. We then ask whether, across all lanes, the worst-case increase in one design's cost relative to another can overcome its nominal advantage. If it cannot, the selected design is optimal under every compatible law; if it can, the test reports the maximum loss as a bound on what the missing pairing is worth.

\begin{lemma}[Product rearrangement]
\label{lem:product-rearrangement}
Let $X,Y$ be nonnegative random variables with fixed marginals and finite means, with quantile functions $Q_X,Q_Y$ defined by $Q(u)=\inf\{x:F(x)\ge u\}$. Among all joint laws with these marginals, the product $XY$ is minimized when $X$ and $Y$ are paired oppositely (high with low) and maximized when they are paired alike (high with high):
\[
\inf\E[XY]=\int_0^1Q_X(u)\,Q_Y(1-u)\,du=:L,
\qquad
\sup\E[XY]=\int_0^1Q_X(u)\,Q_Y(u)\,du=:U.
\]
The lower bound is attained by the countermonotone coupling $(Q_X(V),\,Q_Y(1-V))$ and the upper bound by the comonotone coupling $(Q_X(V),\,Q_Y(V))$, where $V\sim\mathrm{Unif}(0,1)$. The identities hold for discrete, continuous, and mixed marginals alike.
\end{lemma}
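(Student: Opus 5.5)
The plan is to use the layer-cake (Hoeffding-type) representation of the product and reduce the extremal problem to one about bivariate distribution functions. For nonnegative $X,Y$ I would write
\[
XY=\int_0^\infty\!\!\int_0^\infty \mathbf 1\{X>s\}\mathbf 1\{Y>t\}\,ds\,dt .
\]
Tonelli's theorem applies since the integrand is nonnegative, and gives
\[
\E[XY]=\int_0^\infty\!\!\int_0^\infty \Pp(X>s,\,Y>t)\,ds\,dt .
\]
With the marginals fixed, only the joint survival function $\bar H(s,t)=\Pp(X>s,Y>t)$ varies. The Fréchet--Hoeffding bounds hold pointwise for every coupling:
\[
\bigl(\bar F_X(s)+\bar F_Y(t)-1\bigr)_+\;\le\;\bar H(s,t)\;\le\;\min\{\bar F_X(s),\bar F_Y(t)\}.
\]
The upper bound is immediate from $\{X>s,Y>t\}\subseteq\{X>s\}$ and $\{X>s,Y>t\}\subseteq\{Y>t\}$. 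The lower bound is inclusion--exclusion: $\Pp(A\cap B)\ge \Pp(A)+\Pp(B)-1$. Integrating these pointwise bounds gives an upper and a lower bound on $\E[XY]$ that hold for every joint law in the class. The integrals may be $+\infty$ for the upper bound; the statement only asks for finite means, so I would allow $U=\infty$ and note that the identity still holds in $[0,\infty]$.

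Next I would show that both bounds are attained by the stated couplings. Let $V\sim\mathrm{Unif}(0,1)$ and use the standard fact that $Q_X(V)\sim F_X$ for the generalized inverse $Q(u)=\inf\{x:F(x)\ge u\}$. This fact rests on $Q(u)\le x\iff u\le F(x)$, which uses only right-continuity of $F$ and so is valid for discrete, continuous and mixed marginals alike.

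For the comonotone pair $(Q_X(V),Q_Y(V))$, $Q$ is nondecreasing, so $\{Q_X(V)>s\}=\{V>F_X(s)\}$ up to a null set. The joint survival probability is then $\Pp(V>\max\{F_X(s),F_Y(t)\})=\min\{\bar F_X(s),\bar F_Y(t)\}$, the upper Fréchet bound.

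For the countermonotone pair $(Q_X(V),Q_Y(1-V))$, the events are $\{V>F_X(s)\}$ and $\{1-V>F_Y(t)\}$. Their intersection is the interval $(F_X(s),\,1-F_Y(t))$, whose length is $(\bar F_X(s)+\bar F_Y(t)-1)_+$, the lower Fréchet bound. Since $1-V\sim\mathrm{Unif}(0,1)$, this coupling also has the right marginals. Under these couplings $\E[XY]$ equals $\int_0^1 Q_X(u)Q_Y(u)\,du$ and $\int_0^1Q_X(u)Q_Y(1-u)\,du$ respectively, simply by the law of the unconscious statistician applied to $V$. This identifies $U$ and $L$ and shows they are the sup and inf, and in fact a max and a min.

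The main obstacle is the careful handling of ties and atoms when $Q_X$ has flat pieces or jumps. Concretely, one must check that $\{Q_X(V)>s\}$ and $\{V>F_X(s)\}$ differ only by the event $\{V=F_X(s)\}$, which has probability zero. I would handle this directly via the equivalence $Q(u)\le x\iff u\le F(x)$, so that no continuity assumption is needed. A secondary point is finiteness. $L\le \E[X]\,\E[Y]$ because the independent coupling lies between the extremes, so $L$ is finite. $U$ can be infinite without second moments, which is harmless for the statement and irrelevant in the paper's bounded setting.
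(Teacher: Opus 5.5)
Your proposal is correct and follows the paper's proof essentially step for step: the exceedance representation with Tonelli, the pointwise Fr\'echet--Hoeffding bounds on $\Pp(X>s,Y>t)$, and attainment of those bounds simultaneously for all $(s,t)$ by the comonotone and countermonotone couplings. Two points you make explicit are left implicit in the paper. Your use of $Q(u)\le x\iff u\le F(x)$ in fact makes $\{Q_X(V)>s\}=\{V>F_X(s)\}$ an exact identity on $(0,1)$, so no null-set argument is needed. You also note that $U$ may equal $+\infty$ when only the means are assumed finite.
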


We now apply the lemma lane by lane. For two stable-territory plans $y$ and $z$, let $b_{ij}=(a_y)_{ij}-(a_z)_{ij}$ be the difference in lane coefficients: a positive $b_{ij}$ means that each unit of workload on lane $(i,j)$ costs $y$ more than $z$, a negative one the reverse. The product rearrangement gives, for each lane, the range $[L_{ij},U_{ij}]$ over which the unknown coupling can push its expected workload. Because the coupling that maximizes a lane's product need not maximize another lane's, one might expect a gap between the sum of lane-level worst cases and the achievable worst case. The theorem below shows that no gap arises: a single coupling, built one uniform per demand region, makes every lane with a positive coefficient attain its upper bound and every lane with a negative coefficient attain its lower bound, simultaneously. The argument extends from linear lane costs $\gamma_{ij}D_jT_{ij}$ to any nondecreasing transform $\phi_{ij}(T_{ij})$ of travel time, covering late-penalty structures of the kind used in emergency services.

\begin{theorem}[Dependence test for threshold costs]
\label{thm:marginal-threshold}
For stable-territory (fixed-fraction) plans, let the lane cost be $\gamma_{ij}r_{ij}D_j\,\phi_{ij}(T_{ij})$, where each $\phi_{ij}\colon[t^{\ell}_{ij},t^{u}_{ij}]\to\R_+$ is bounded and nondecreasing; this covers the linear cost ($\phi_{ij}(t)=t$), a flat late penalty ($\phi_{ij}(t)=t+(\lambda/\gamma_{ij})\,\mathbf 1\{t>\tau\}$), and piecewise-linear penalty schedules. Apply Lemma~\ref{lem:product-rearrangement} to the pair $(D_j,\,\phi_{ij}(T_{ij}))$ to obtain the lane bounds $L_{ij}\le\E_P[D_j\phi_{ij}(T_{ij})]\le U_{ij}$, valid for every joint law $P$ with the given marginals. Then the worst-case expected cost difference between plans $y$ and $z$ over all such laws is
\begin{equation}
K(y,z):=\sup_{P\in\mathfrak C}\E_P\bigl[C(y)-C(z)\bigr]
\;=\;c_y-c_z+\sum_{i,j}\bigl(b_{ij}^{+}\,U_{ij}\;-\;b_{ij}^{-}\,L_{ij}\bigr),
\label{eq:exact-marginal-gap}
\end{equation}
where $b_{ij}^{+}=\max(b_{ij},0)$ and $b_{ij}^{-}=\max(-b_{ij},0)$. The bound is attained: there exists a joint law in $\mathfrak C$ under which the difference equals $K(y,z)$.

\emph{Decision criterion.} Plan $y$ is optimal under every law compatible with the marginals if and only if $K(y,z)\le 0$ for every feasible alternative $z$. In that case the dependence question is closed: no unknown pairing of demand and travel can make a different plan better.
\end{theorem}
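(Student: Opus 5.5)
The plan has three parts: an upper bound by linearity, a single coupling that attains it, and the decision criterion as a corollary. For the upper bound, fix $y,z$ and any $P\in\mathfrak C$. Because both plans are stable-territory plans, the cost difference is affine in the lane quantities $W_{ij}:=D_j\phi_{ij}(T_{ij})$:
\[
C(y)-C(z)=c_y-c_z+\sum_{i,j}b_{ij}W_{ij}.
\]
Taking expectations and splitting $b_{ij}=b_{ij}^+-b_{ij}^-$, I apply Lemma~\ref{lem:product-rearrangement} lane by lane to $X=D_j$ and $Y=\phi_{ij}(T_{ij})$. Both are nonnegative with finite means, since the intervals are bounded and $\phi_{ij}$ is bounded. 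This gives $\E_PW_{ij}\le U_{ij}$ where $b_{ij}>0$ and $\E_PW_{ij}\ge L_{ij}$ where $b_{ij}<0$. Hence $\E_P[C(y)-C(z)]$ is at most the right side of \eqref{eq:exact-marginal-gap}, and so is the supremum.

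For attainment I build one coupling. Take independent uniforms $V_1,\dots,V_n$, one for each demand region, and set $D_j=Q_{F_j}(V_j)$. For each lane with $b_{ij}\ge0$ set $T_{ij}=Q_{G_{ij}}(V_j)$; for each lane with $b_{ij}<0$ set $T_{ij}=Q_{G_{ij}}(1-V_j)$. Every coordinate then has its prescribed marginal, so this law lies in $\mathfrak C$, because $\mathfrak C$ constrains only the marginals $F_j$ and $G_{ij}$, not any joint structure among the $T$'s. The lanes $(i,j)$ share only the demand variable $D_j$, and $D_j$ is a function of $V_j$ alone, so each lane can be oriented independently of the others. This is the point where simultaneous attainment must not be broken.

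The step I expect to be the main obstacle is showing that this coupling attains $U_{ij}$ and $L_{ij}$ for the transformed variable $\phi_{ij}(T_{ij})$, not just for $T_{ij}$. Two facts are needed. First, for a nondecreasing $\phi$, the variable $\phi(Q_G(V))$ is a quantile representation of $\phi(T)$, that is, $\phi\circ Q_G$ coincides with $Q_{\phi(T)}$ almost everywhere on $(0,1)$. I would check this by noting that $\phi\circ Q_G$ is nondecreasing on $(0,1)$ and has the law of $\phi(T)$, and that a nondecreasing function of a uniform equals the quantile function of its law almost everywhere. Second, Lemma~\ref{lem:product-rearrangement} itself requires $D_j$ and $\phi_{ij}(T_{ij})$ to be nondecreasing functions of a common uniform, which gives $U$, or of $V$ and $1-V$, which gives $L$. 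With both facts, $\E W_{ij}$ equals $\int Q_{D_j}(u)Q_{\phi(T_{ij})}(u)\,du=U_{ij}$ or $\int Q_{D_j}(u)Q_{\phi(T_{ij})}(1-u)\,du=L_{ij}$. Summing over lanes gives equality in \eqref{eq:exact-marginal-gap}. Jumps in $\phi$, as in the flat late penalty, and atoms in the marginals are handled by the almost-everywhere statement, so no continuity assumption is needed.

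The decision criterion then follows directly. Plan $y$ is optimal under every $P\in\mathfrak C$ if and only if $\E_P[C(y)-C(z)]\le0$ for all $z$ and all $P$. For each fixed $z$, this is the statement $\sup_P\E_P[C(y)-C(z)]=K(y,z)\le0$, and the attainment just shown guarantees that the supremum is a genuine value rather than only an upper bound.
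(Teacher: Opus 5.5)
Your proposal is correct and follows the paper's proof essentially step for step. It gets the upper bound from linearity plus Lemma~\ref{lem:product-rearrangement} lane by lane, and it gets simultaneous attainment from one shared uniform $V_j$ per region, with the comonotone orientation on lanes with $b_{ij}\ge 0$ and the countermonotone one on lanes with $b_{ij}<0$; your almost-everywhere identification of $\phi_{ij}\circ Q_{G_{ij}}$ with the quantile function of $\phi_{ij}(T_{ij})$ is slightly more careful than the paper's pointwise claim, which can fail at countably many points when $\phi_{ij}$ is not left-continuous but does not affect the integrals.
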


The nontrivial step in the proof is simultaneous attainment: each region's demand $D_j$ appears in the cost of several lanes, so the coupling that makes lane $(i,j)$ expensive for $y$ need not make lane $(k,j)$ cheap. The construction resolves this by using one shared uniform $V_j$ per region: lanes with $b_{ij}>0$ take the comonotone pairing on $V_j$ and attain $U_{ij}$, while lanes with $b_{ij}<0$ take the countermonotone pairing and attain $L_{ij}$. Both types coexist on the same $V_j$, and every marginal is preserved, so the bound is exact rather than a sum of mutually incompatible lane-level worst cases.

For the special case of choosing between two single posts $i$ and $k$, the test reduces to
\[
K(i,k)=f_i-f_k+\sum_j\bigl(\gamma_{ij}\,U_{ij}-\gamma_{kj}\,L_{kj}\bigr),
\]
which says: the fixed-cost advantage of post $i$ must survive an operating environment in which its lanes are slow precisely when demand is high, while post $k$'s lanes are fast on those same days.

\begin{remark}[One-sidedness]
\label{rem:onesided}
The test is one-sided. A pass ($K(y,z)\le 0$ for every $z$) is a guarantee: no pairing consistent with the marginals makes a different plan better, and the dependence question is closed. A fail ($K(y,z)>0$ for some $z$) is not a guarantee of harm: the attaining law, which pairs demand and travel adversarially on every lane at once, may be physically impossible (road geometry, shared weather, or observed cross-corridor correlations). In that case \eqref{eq:exact-marginal-gap} remains a valid upper bound on the loss the missing dependence could cause, but the bound need not be tight. The decision tree routes on this distinction: a pass closes the question; a fail reports the positive value as an upper bound and moves on to the next test. Finite-sample bands follow from DKW on each observed marginal; unobserved lanes carry the full interval width and no sample-size guarantee is given.
\end{remark}

\section{The Regret-to-Premium Ratio and How to Compute It}
\label{sec:cert2}

Section~\ref{sec:cert1} settles the question: can the unknown pairing of demand and travel change the positioning decision? The answer is yes or no, and the test is one-sided. This section takes up the quantitative question that the premium split of Section~\ref{sec:two-accountings} raised: how large is a design's worst-case regret relative to its worst-case premium? We call this quotient the \emph{regret-to-premium ratio}; we bound it with tractable computations and read it as a function of the ambiguity radius. A recurring theme is that the ratio depends on the design set as much as on the data: the same operating environment can expose one dimension of the design and leave another untouched, and the window lemma at the end of the section isolates which parts of the design carry worst-case regret.

\subsection{The finite-support setting}
\label{sec:standing}

The results in this section are stated on a finite state space, which is the setting in which the operator's data live. The $S$ states $\xi^1,\dots,\xi^S$ are the recorded or reachable operating periods; the $N$ observations $\wh\xi_k$ are the training data, each appearing in the state set (repeats allowed), with empirical weight $w_k$ (default $1/N$). The design set $\cY$ is finite. For a design $y$ and state $s$, write $c_y(s)=C(y,\xi^s)=F(y)+Q(y;\xi^s)$, where $Q$ is the recourse value of \eqref{eq:recourse}; collect these into the cost vector $c_y\in\R^S$.

The ambiguity set is the finite-support Wasserstein ball of radius $\rho$ around the empirical law $\wh p=(w_1,\dots,w_N)$, defined through the transport polytope
\begin{equation}
\begin{gathered}
\Theta_\rho=\Bigl\{\theta\in\R^{N\times S}_+:\ \textstyle\sum_s\theta_{ks}=w_k\ \ \forall k,\ \ \sum_{k,s}d_{ks}\,\theta_{ks}\le\rho\Bigr\},\\[2pt]
\cU_\rho=\bigl\{p\in\R^S_+:\ p_s=\textstyle\sum_k\theta_{ks}\ \ \forall s,\ \ \theta\in\Theta_\rho\bigr\},
\end{gathered}
\label{eq:transport-polytope}
\end{equation}
where $d_{ks}$ is the ground distance between observation $\wh\xi_k$ and state $\xi^s$. The variable $\theta_{ks}$ is the probability mass relocated from observation $k$ to state $s$; the row sums $\sum_s\theta_{ks}=w_k$ preserve the total mass of each observation, and the budget constraint $\sum_{k,s}d_{ks}\,\theta_{ks}\le\rho$ caps the total transport cost. This is the exact Wasserstein ball when the adversary is restricted to the finite support; on the unrestricted support it is an inner approximation; the structural results below are stated for the finite-support ball.

For a law $p$ on the state space, the regret, premium, and regret-to-premium ratio of a design $y$ are
\[
R(y;p)\;=\;c_y^\top p-C^*(p),
\qquad
C^*(p)\;=\;\min_{z\in\cY}c_z^\top p,
\qquad
\Omega(y)\;=\;\max_{p\in\cU_\rho}R(y;p),
\qquad
\Pi(y)\;=\;\max_{p\in\cU_\rho}c_y^\top p-c_y^\top\wh p,
\]
and $\omega(y)=\Omega(y)/\Pi(y)$. The DRO adversary maximizes $c_y^\top p$ over the same polytope; the regret adversary maximizes $R(y;p)$. They live on the same feasible set and differ only in objective.

\subsection{The regret adversary and its worst case}
\label{sec:relabelling}

For a fixed design $y$, the regret adversary's problem is to find the law in the ball that makes $y$ look worst relative to the best alternative. Two structural facts make this problem both tractable and interpretable: the objective is convex in the law, so the worst case sits at a vertex of the transport polytope, and every vertex has a simple geometric reading: it is a relabelling of the operating record in which at most one period is split.

\begin{lemma}[Convexity of regret in the law]
\label{lem:convex-regret}
For a fixed design $y$, the regret $R(y;p)$ is a convex, piecewise-linear function of the law $p$ on the simplex, with the representation
\[
R(y;p)\;=\;\max_{z\in\cY}(c_y-c_z)^\top p.
\]
\end{lemma}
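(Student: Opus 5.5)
The plan is to derive the representation first and then read convexity and piecewise linearity off it. Fix $y$ and a law $p$ in the simplex. By definition, $C^*(p)=\min_{z\in\cY}c_z^\top p$. Since $\cY$ is finite, this minimum is attained. Negating it gives $-C^*(p)=\max_{z\in\cY}(-c_z^\top p)$. Adding the constant-in-$z$ term $c_y^\top p$ inside the maximum gives
\[
R(y;p)=c_y^\top p-\min_{z\in\cY}c_z^\top p=\max_{z\in\cY}\bigl(c_y^\top p-c_z^\top p\bigr)=\max_{z\in\cY}(c_y-c_z)^\top p,
\]
which is the claimed identity. Equivalently, each term is $\E_p[\Delta_{y,z}]$, matching the second form of \eqref{eq:drro}.

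Convexity then follows from the standard fact that a pointwise maximum of linear functions is convex. For $p,p'$ and $\alpha\in[0,1]$, each linear piece satisfies $(c_y-c_z)^\top(\alpha p+(1-\alpha)p')\le\alpha R(y;p)+(1-\alpha)R(y;p')$. Taking the maximum over $z$ preserves this inequality.

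Piecewise linearity uses finiteness of $\cY$. The simplex is covered by the finitely many closed polyhedral cells
\[
\{p:(c_y-c_z)^\top p\ge(c_y-c_{z'})^\top p\ \ \forall z'\},
\]
one for each $z$. On each cell, $R(y;\cdot)$ agrees with the single linear map $p\mapsto(c_y-c_z)^\top p$. Equivalently, the cell for $z$ is the region where $z$ is an optimal comparator, $c_z^\top p=C^*(p)$.

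No step is a genuine obstacle. The only point needing care is that finiteness of $\cY$, assumed in Section~\ref{sec:standing}, is what makes the maximum attained and the number of linear pieces finite. For a compact infinite design set the argument would still give convexity, but not piecewise linearity. It is also worth noting that $R(y;p)\ge0$, since the term $z=y$ contributes zero.
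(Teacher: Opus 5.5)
Your proof is correct and follows essentially the same argument as the paper. The paper observes that $C^*(p)=\min_{z\in\cY}c_z^\top p$ is a finite minimum of linear functions, hence concave and piecewise linear, so $c_y^\top p-C^*(p)$ is convex; it then gets the max representation by rewriting $-\min$ as $\max$, using the same ingredients as you in the opposite order.
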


\begin{proposition}[The worst-case law is a vertex relabelling]
\label{prop:vertex}
For every design $y$, the worst-case regret $\Omega(y)=\max_{p\in\cU_\rho}R(y;p)$ is attained at a law $p_{\theta^*}$ for which $\theta^*$ is a vertex of the transport polytope $\Theta_\rho$. A vertex of $\Theta_\rho$ has at most $N+1$ positive entries. Geometrically, this means that every observation is reassigned wholesale to a single state, except possibly one observation, which is split between at most two states. The same structural result holds for the DRO adversary.
\end{proposition}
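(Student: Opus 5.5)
The proof lifts the adversary's problem from the law $p$ to the transport plan $\theta$ and then uses convexity. The map $\theta\mapsto p_\theta$, $(p_\theta)_s=\sum_k\theta_{ks}$, is linear. By Lemma~\ref{lem:convex-regret}, the composite $\theta\mapsto R(y;p_\theta)=\max_{z\in\cY}(c_y-c_z)^\top p_\theta$ is therefore a maximum of finitely many linear functions of $\theta$, so it is convex and continuous. The set $\Theta_\rho$ is a polytope: it is a nonempty intersection of finitely many linear equalities and inequalities, bounded because $\theta\ge0$ and each row sums to $w_k$. It is nonempty once $\rho\ge\min_\theta\sum d_{ks}\theta_{ks}$, which holds for every $\rho\ge0$ since each $\wh\xi_k$ is in the state set at distance zero. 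A convex function on a polytope attains its maximum at a vertex, because any point is a convex combination of vertices and convexity bounds the value by the largest vertex value. Hence $\Omega(y)=\max_{\theta\in\Theta_\rho}R(y;p_\theta)$ is attained at a vertex $\theta^*$. The same argument covers the DRO adversary, whose objective $c_y^\top p_\theta$ is linear and hence also convex.

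Next comes the support count, which is the step needing care. A vertex of $\{\theta\ge0:A\theta=w,\ g^\top\theta\le\rho\}$ is a basic feasible solution. Introduce a slack $\sigma\ge0$ for the budget, so the system has $N+1$ equality rows in the variables $(\theta,\sigma)$. A basic feasible solution has at most $N+1$ positive entries among $(\theta,\sigma)$, since the columns of its positive entries must be linearly independent in $\R^{N+1}$. I will argue this directly: at a vertex, the columns of the positive entries are linearly independent. Otherwise, a nonzero direction supported on them satisfies the equalities, and moving $\pm\epsilon$ along it stays feasible, contradicting extremality. It follows that $\theta^*$ has at most $N+1$ positive entries, with at most $N$ if the budget is slack. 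One must check that vertices of $\Theta_\rho$ correspond to vertices of the slack-augmented set; this holds because $\sigma$ is an affine function of $\theta$.

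Finally, the geometric reading. Each row $k$ has $\sum_s\theta^*_{ks}=w_k>0$, so each of the $N$ rows carries at least one positive entry. With at most $N+1$ positive entries in total, at most one row has two positive entries, and none has three or more. Each row with a single positive entry sends observation $k$ wholesale to one state. The exceptional row, if one exists, is split between exactly two states. The main obstacle is only the bookkeeping of the budget row, namely whether it is tight. When the budget is slack, the vertex is a pure relabelling with no split at all.
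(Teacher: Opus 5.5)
Your proof is correct and follows essentially the same route as the paper. You lift the adversary's problem to $\theta$, use convexity of $\theta\mapsto R(y;p_\theta)$ (linearity for DRO) to get a vertex maximizer, and add a slack to the budget row to count positive entries of a basic feasible solution. You then use $w_k>0$ to conclude that at most one row is split. You are more explicit than the paper about the nonemptiness of $\Theta_\rho$ and about why vertices of $\Theta_\rho$ match basic feasible solutions of the slack-augmented system, but the argument is the same.
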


The worst-case law is therefore a \emph{relabelling} of the operating record: it takes the observed periods and reassigns them to other recorded or reachable periods, dividing at most one period. A manager can read the adversary's law as a list of statements of the form ``this Tuesday behaved like that Friday,'' with at most one period split in two. The next result shows that restricting to pure relabellings (no splitting) loses at most one observation's worth of regret.

\begin{corollary}[Relabellings without splitting are nearly as good]
\label{cor:det-relabelling}
Let $\Omega_{\det}(y)$ be the maximum of $R(y;p_\theta)$ over the pure relabellings, i.e.\ over $\theta\in\Theta_\rho$ with $\theta_{ks}\in\{0,\,w_k\}$ for every $(k,s)$. Then
\[
\Omega_{\det}(y)
\;\leq\;\Omega(y)
\;\leq\;
\Omega_{\det}(y)
\;+\;
\max_k w_k
\;\cdot\;
\max_{z\in\cY}\Bigl[\max_s\Delta_{y,z}(\xi^s)-\min_s\Delta_{y,z}(\xi^s)\Bigr].
\]
The gap between the two sides is at most one observation's weight times the largest swing of the pairwise differential across the state space.
\end{corollary}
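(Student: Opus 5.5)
The lower bound $\Omega_{\det}(y)\le\Omega(y)$ needs only one observation. Every pure relabelling, meaning a $\theta\in\Theta_\rho$ with $\theta_{ks}\in\{0,w_k\}$, is a feasible point of $\Theta_\rho$. So $\Omega_{\det}(y)$ maximizes the same function $R(y;p_\theta)$ over a subset of the feasible set defining $\Omega(y)$. The work is in the upper bound, and for it I would use a rounding argument that starts from the vertex structure of Proposition~\ref{prop:vertex}.

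\emph{Step 1 (start at a vertex).} By Proposition~\ref{prop:vertex}, choose a maximizer $\theta^*$ of $R(y;p_\theta)$ over $\Theta_\rho$ that is a vertex. In $\theta^*$ every row $k$ is concentrated on a single state, with one possible exception: a row $k_0$ split between two states $s_1,s_2$ with masses $\alpha$ and $w_{k_0}-\alpha$. If no row is split, then $\theta^*$ is already a pure relabelling and $\Omega=\Omega_{\det}$. Otherwise, label the states so that $d_{k_0s_1}\le d_{k_0s_2}$.

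\emph{Step 2 (round the split row to the cheaper state).} Define $\theta'$ from $\theta^*$ by moving the mass $w_{k_0}-\alpha$ of row $k_0$ from $s_2$ to $s_1$. The row sums are unchanged. The transport cost changes by $(w_{k_0}-\alpha)(d_{k_0s_1}-d_{k_0s_2})\le 0$, so the budget $\rho$ is still met. Hence $\theta'$ is a feasible pure relabelling, and $R(y;p_{\theta'})\le\Omega_{\det}(y)$. This is the step I expect to need the most care. Rounding in the wrong direction could violate the budget, so the choice of the nearer state is essential. It also relies on Proposition~\ref{prop:vertex} guaranteeing at most two states in the split row; with three or more states the rounding would not be a single mass shift.

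\emph{Step 3 (compare regrets via Lemma~\ref{lem:convex-regret}).} Write $p^*=p_{\theta^*}$ and $p'=p_{\theta'}$. Then $p'-p^*=(w_{k_0}-\alpha)(e_{s_1}-e_{s_2})$. Let $z^*$ attain the maximum in $R(y;p^*)=\max_z\Delta_{y,z}^\top p^*$, where $\Delta_{y,z}$ denotes the vector $(\Delta_{y,z}(\xi^s))_s$. Using the representation of Lemma~\ref{lem:convex-regret},
\[
R(y;p')\;\ge\;\Delta_{y,z^*}^\top p'
\;=\;\Omega(y)-(w_{k_0}-\alpha)\bigl[\Delta_{y,z^*}(\xi^{s_2})-\Delta_{y,z^*}(\xi^{s_1})\bigr].
\]
The shifted mass satisfies $w_{k_0}-\alpha\le\max_k w_k$, and the bracket is at most $\max_s\Delta_{y,z^*}-\min_s\Delta_{y,z^*}$, which is at most the maximum of this swing over $z\in\cY$. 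Combining this with $R(y;p')\le\Omega_{\det}(y)$ from Step 2 gives the stated upper bound.
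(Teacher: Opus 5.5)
Your proof is correct and follows the paper's argument essentially step for step. You take the vertex maximizer from Proposition~\ref{prop:vertex}, round the single split row onto its nearer destination so the budget stays feasible, and lower-bound the rounded law's regret through the fixed comparator $z^*$ via Lemma~\ref{lem:convex-regret}. The paper states the same bound as a total-variation distance of at most $w_{k_0}$ times the oscillation of $\Delta_{y,z^*}$; your explicit moved mass $w_{k_0}-\alpha$ is that same quantity.
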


\subsection{Exact separation is a single MILP}
\label{sec:separation}

The hard part of computing the regret-to-premium ratio is not the adversary's law but the comparator: for a proposed design $y$, which alternative $z$ and which law $p$ together expose the largest worst-case regret? The separation value $\Omega(y)=\max_{z\in\cY}\max_{p\in\cU_\rho}\E_p\Delta_{y,z}$ nests a maximization over laws inside a maximization over designs, and each comparator's cost contains a minimization over recourse flows. Three levels of optimization look intractable. The key observation is that the comparator's cost enters with a negative sign, so the adversary benefits from the comparator's cheapest recourse; $-\min$ becomes $\max$ and the two levels merge. What remains is a single mixed-integer program.

\begin{theorem}[Exact separation as one MILP]
\label{thm:separation}
Let the design set consist of opening/closing and fleet-size coordinates: $z=(x,v)$ with $x_i\in\{0,1\}$ indicating whether post $i$ is open and $v_i\in\{0,1,\dots,\bar V_i\}$ the integer fleet size at post $i$, with fixed cost $F(z)$ linear in $(x,v)$. For a fixed proposed design $\bar y$, the separation value
\[
\Omega(\bar y)\;=\;\max_{z\in\cY}\max_{p\in\cU_\rho}\E_p\bigl[C(\bar y,\xi)-C(z,\xi)\bigr]
\]
equals the optimal value of the following mixed-integer program:
\begin{equation}
\begin{aligned}
\max\quad
&\textstyle\sum_s p_s\,c_{\bar y}(s)\;-\;F(z)
\;-\;\sum_s\Bigl[\sum_{i,j}\ell_{ij}(t^s_{ij})\,Q^s_{ij}+\sum_j p_j\,U^s_j\Bigr]\\[4pt]
\text{s.t.}\quad
&\theta\in\Theta_\rho,
\qquad
p_s=\textstyle\sum_k\theta_{ks},
\qquad
z\in\cY,\\[2pt]
&\textstyle\sum_iQ^s_{ij}+U^s_j\;=\;p_s\,d^s_j
\qquad&&\forall\; j,\,s,\\[2pt]
&\textstyle\sum_j a_{ij}(t^s_{ij})\,Q^s_{ij}\;\le\;W_{is}
\qquad&&\forall\; i,\,s,
\qquad
Q^s,\,U^s\ge 0,
\end{aligned}
\label{eq:separation-milp}
\end{equation}
where the product $W_{is}=p_s\,v_i$ is eliminated by a standard binary expansion: write $v_i=\sum_{b=0}^{B_i}2^b\,\beta_{ib}$ with $\beta_{ib}\in\{0,1\}$ and $B_i=\lceil\log_2(\bar V_i+1)\rceil$, and set $W_{is}=\sum_b 2^b\,\omega_{ibs}$ subject to
\[
0\le\omega_{ibs}\le\beta_{ib},
\qquad
\omega_{ibs}\le p_s,
\qquad
\omega_{ibs}\ge p_s-(1-\beta_{ib}),
\]
the McCormick constraints for the bilinear term $p_s\cdot 2^b\beta_{ib}$.

\emph{Size.} The program has $O(NS)$ transport variables, $O(S|I||J|)$ recourse-flow variables, and $O(S|I|\log\bar V)$ binary and auxiliary variables.
\end{theorem}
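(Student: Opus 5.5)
The plan has three steps: rewrite the comparator's expected cost as a scaled LP, collapse the inner minimization into the outer maximization, and then show that the binary expansion with McCormick constraints linearizes the remaining bilinear capacity term exactly.

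\emph{Step 1: the comparator's expected cost as a scaled LP.} Fix $\bar y$, a comparator $z=(x,v)$ and a law $p\in\cU_\rho$. Then
\[
\E_p C(z,\xi)=F(z)+\sum_s p_s\,Q(z;d^s,t^s).
\]
For each $s$ we would substitute $Q^s=p_s q^s$ and $U^s=p_s u^s$ into the recourse LP \eqref{eq:recourse}, with capacity $y_i=v_i$. This gives
\[
p_s\,Q(z;\xi^s)=\min\Bigl\{\sum_{i,j}\ell_{ij}(t^s_{ij})Q^s_{ij}+\sum_j p_jU^s_j\Bigr\}
\]
subject to the balance constraints $\sum_iQ^s_{ij}+U^s_j=p_sd^s_j$ and the capacity constraints $\sum_j a_{ij}(t^s_{ij})Q^s_{ij}\le p_sv_i$, with $Q^s,U^s\ge0$. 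For $p_s>0$ this is a pure rescaling of the LP. For $p_s=0$ the feasible set is forced to $\{0\}$, because $a_{ij}>0$ and all terms are nonnegative, so both sides vanish. If closing post $i$ forces $v_i=0$ through $x$, positivity of $a_{ij}$ kills flow there, consistently with the recourse model. Complete recourse, supplied by the shortage variable, guarantees feasibility, and the objective is bounded below by $0$.

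\emph{Step 2: merging the levels.} We now have
\[
\E_p\Delta_{\bar y,z}=\sum_sp_sc_{\bar y}(s)-F(z)-\sum_s\min_{Q^s,U^s}\{\cdots\}.
\]
Since $-\min=\max(-\cdot)$ and the minimizations over different $s$ are independent, we get
\[
\max_z\max_p\E_p\Delta=\max_{z,p}\max_{Q,U}\Bigl[\sum_sp_sc_{\bar y}(s)-F(z)-\sum_s(\cdots)\Bigr].
\]
This is a single joint maximization over $(z,\theta,p,Q,U)$ subject to the scaled constraints, which is exactly \eqref{eq:separation-milp} with $W_{is}=p_sv_i$. The first term is linear because $c_{\bar y}$ is a precomputed constant vector. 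Attainment follows because $\cY$ is finite and, for each fixed $z$, the feasible region is a polytope (bounded since $p_s\le1$) with a linear objective. We would also note that the empirical $p$ coming from $\Theta_\rho$ is exactly $\cU_\rho$, by \eqref{eq:transport-polytope}.

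\emph{Step 3: exact linearization, the main obstacle.} The only remaining nonlinearity is $p_sv_i$. Write
\[
v_i=\sum_b2^b\beta_{ib}.
\]
Each product $p_s\beta_{ib}$ has one binary factor and one factor $p_s\in[0,1]$. For a binary $\beta$ and $p\in[0,1]$, the McCormick envelope $0\le\omega\le\beta$, $\omega\le p$, $\omega\ge p-(1-\beta)$ pins $\omega=p\beta$ exactly:
\begin{itemize}
\item if $\beta=0$, the constraints give $\omega=0$;
\item if $\beta=1$, the constraints give $\omega=p$.
\end{itemize}
The relaxation is therefore exact at every feasible integer point, not just a relaxation, and $W_{is}=p_sv_i$ holds at all feasible points. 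We would also check that $B_i$ bits suffice to represent every value up to $\bar V_i$, and that any extra bit patterns exceeding $\bar V_i$ are excluded by the constraint $z\in\cY$ (add $\sum_b2^b\beta_{ib}\le\bar V_i$ if needed).

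Exactness of this linearization is the delicate point. It must hold for every feasible $(\beta,p)$, not only at optima, so that the MILP's feasible set projects onto precisely the set in Step 2. The size count then follows by tallying the variables: $\theta$ contributes $NS$, the pairs $(Q^s,U^s)$ contribute $O(S|I||J|)$, and $(\beta,\omega)$ contribute $O(S|I|\log\bar V)$.
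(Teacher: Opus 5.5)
Your proposal is correct and follows the paper's proof essentially step for step. You rescale each scenario's recourse LP by $p_s$ (with the $p_s=0$ case vanishing), turn $-\min$ into $\max$ so the comparator's recourse merges with the outer maximization over $(z,p)$, use $\sum_s p_s=1$ to pull out $F(z)$, and linearize the single bilinear term $p_s v_i$ exactly by the binary expansion with McCormick constraints. Your explicit case check that the envelope forces $\omega=p\beta$ at every binary $\beta$, and your note that bit patterns above $\bar V_i$ are excluded by $z\in\cY$, spell out what the paper asserts in one line.
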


\begin{remark}[Simplifications and time limits]
\label{rem:separation-practice}
When each post has a fixed capacity $C_i$ upon opening (no fleet-size coordinate), the product $W_{is}=C_i\,p_s\,x_i$ is linear in $p_s$ for each fixed $x_i$, and the McCormick constraints reduce to the single bound $0\le W_{is}\le C_i\,x_i$ together with $W_{is}\le C_i\,p_s$; no binary expansion is needed. In practice the separation is run with a time limit. A time-limited solve produces a primal feasible solution, which is a valid lower bound on $\Omega(\bar y)$, and a dual bound, which is a valid upper bound; the gap between them is the width of the interval reported when the separation is cut short. Both bounds should be reported when the solve does not certify optimality.
\end{remark}

\subsection{Bounding the worst-case regret without separation}
\label{sec:sandwich}

The separation MILP gives the exact value, but it is the hardest computation in the pipeline and most instances do not need it. This subsection provides a family of bounds that bracket $\Omega$ from quantities a planner already has or can compute cheaply: per-candidate recourse solves, one transport LP, and Lipschitz constants from the cost model. Together they form a sandwich: a lower bound from a pool of candidate comparators, an upper bound from the ex-post best design in each state, and several cheaper fallbacks (a Lipschitz bound, a radius-free ratio, an outer hull) that tighten the interval when the main two are too wide. These bounds are useful because they can determine the magnitude of the regret-to-premium ratio without necessarily solving the MILP.

\begin{proposition}[Sandwich bounds on $\Omega$]
\label{prop:sandwich}
\leavevmode
\begin{enumerate}[label=(\roman*),leftmargin=1.9em]
\item \textbf{Lower bounds from a comparator pool.} For any finite set $Z\subseteq\cY$ of candidate comparators,
\[
\Omega_{\mathrm{LB}}(y;Z)\;:=\;\max_{z\in Z}\operatorname{LP}(y,z)
\;\leq\;\Omega(y),
\]
where $\operatorname{LP}(y,z)$ is the transport LP over $\Theta_\rho$ with linear objective $(c_y-c_z)^\top p$: the adversary's best law for the \emph{fixed} pair $(y,z)$, one LP per comparator. The DRO worst-case law also gives a lower bound: $R(y;\,p^*_{\mathrm{DRO}}(y))\le\Omega(y)$. A natural pool collects the SAA design, the DRO design, the ex-post optima $z_s\in\arg\min_z c_z(s)$ for each state $s$, and any comparators generated during the solve, deduplicated.

\item \textbf{Ex-post upper bound.} Let $g_s(y):=c_y(s)-\min_{z\in\cY}c_z(s)\ge 0$ be the cost by which $y$ exceeds the best design in state $s$, computed over the enumerated design set, one recourse solve per state and candidate. Then
\[
\Omega(y)\;\leq\;\Omega_{\mathrm{UB1}}(y)
\;:=\;\max_{p\in\cU_\rho}\sum_s p_s\,g_s(y),
\]
a single transport LP once the $g_s$ values are known. Equality holds if and only if there is a maximizer $p^*$ on whose support a single design is optimal in every state; the gap at $p^*$ is the expected value of perfect information for the design problem under $p^*$. (Replacing $\cY$ by a subset in the definition of $g_s$ yields a quantity that is neither an upper nor a lower bound on $\Omega$; we do not report it.) If a single-scenario MIP is time-limited, $g_s$ must be computed from the solver's lower bound on $\min_z c_z(s)$; an incumbent is not valid.

\item \textbf{Outer-hull bound.} If the ambiguity set is contained in the convex hull of a finite set of laws, $\cU_\rho\subseteq\operatorname{conv}\{p_v\}_{v\in V}$, then $\Omega(y)\le\max_v R(y;p_v)$, each term evaluated by one design MIP. Point masses on the reachable states give $\max_s g_s(y)$, which is weaker than part (ii); the outer hull is used only to refine (ii) when the sandwich is still open.

\item \textbf{Lipschitz bound.} For the recourse model with $a_{ij}(t)=t+\bar s$ and $\ell_{ij}(t)=\gamma_{ij}t$, assume the ground metric has per-coordinate scales $s^D_j$ and $s^T_{ij}$, in that $|d_j-d'_j|\le s^D_j\,d(\xi,\xi')$ and $|t_{ij}-t'_{ij}|\le s^T_{ij}\,d(\xi,\xi')$ for states $\xi=(d,t)$, $\xi'=(d',t')$. Then each cost $C(y,\cdot)$ is $\bar\Lambda$-Lipschitz on the finite support and each pairwise differential $\Delta_{y,z}(\cdot)$ is $2\bar\Lambda$-Lipschitz, with the decision-free constant
\[
\bar\Lambda
\;=\;\sum_j \bar\pi\,s^D_j
\;+\;\sum_{i,j}(\gamma_{ij}+\bar\mu_i)\,d_j^{u}\,s^T_{ij},
\]
where $\bar\pi=\max_j p_j$ and $\bar\mu_i=\max_{j:\,a^{\ell}_{ij}>0}(p_j-\gamma_{ij}t^{\ell}_{ij})_+/a^{\ell}_{ij}$ are bounds on the recourse dual multipliers (Appendix~\ref{app:service}). The bound gives, for every comparator $z$,
\[
\max_{p\in\cU_\rho}\E_p\Delta_{y,z}
\;\leq\;\E_{\wh p}\Delta_{y,z}+2\rho\,\bar\Lambda,
\]
and in particular $\Omega(y_{\mathrm{SAA}})\le2\rho\,\bar\Lambda$ since $\E_{\wh p}\Delta_{y_{\mathrm{SAA}},z}\le 0$ for every $z$.

\item \textbf{Radius-free ratio under stable territories.} Under the stable-territory model with the covariance ground metric, the premium takes the closed form $\Pi(y)=\rho\,\sd(C(y,Z))$ on unrestricted support, and the worst-case regret satisfies $\Omega(y_{\mathrm{SAA}})\le\rho\,\max_z\sd(C(y_{\mathrm{SAA}},Z)-C(z,Z))$. Taking the ratio,
\begin{equation}
\omega(y_{\mathrm{SAA}})
\;\leq\;
\max_{z\in\cY}\frac{\sd\bigl(C_{y_{\mathrm{SAA}}}-C_z\bigr)}{\sd\bigl(C_{y_{\mathrm{SAA}}}\bigr)}
\qquad\text{for every }\rho>0\text{ with }\sd(C_{y_{\mathrm{SAA}}})>0.
\label{eq:sd-certificate}
\end{equation}
The bound is independent of the radius: enlarging the ambiguity set does not change the right-hand side. On bounded support the numerator bound remains valid and, when positive, the exact finite-support premium replaces the denominator (Appendix~\ref{app:affine}).
\end{enumerate}
\end{proposition}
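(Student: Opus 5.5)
The plan is to treat the five parts separately. All of them rest on the representation $R(y;p)=\max_{z\in\cY}(c_y-c_z)^\top p$ of Lemma~\ref{lem:convex-regret}, which gives $\Omega(y)=\max_{z\in\cY}\max_{p\in\cU_\rho}(c_y-c_z)^\top p$ once the two maxima are exchanged.

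\textbf{Part (i).} Restrict the outer maximum to $Z\subseteq\cY$; each inner maximum over $\cU_\rho$ is exactly $\operatorname{LP}(y,z)$, which gives $\Omega_{\mathrm{LB}}\le\Omega$. The DRO bound is immediate because $p^*_{\mathrm{DRO}}(y)\in\cU_\rho$.

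\textbf{Part (ii).} For every $z$ and $s$ we have $c_y(s)-c_z(s)\le g_s(y)$, so $(c_y-c_z)^\top p\le\sum_s p_s g_s(y)$ for $p\ge0$. Maximize over $z$ and then over $p$.

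For the equality clause, write $\sum_s p_sg_s-R(y;p)=C^*(p)-\sum_s p_s\min_z c_z(s)\ge0$. This is the EVPI at $p$, and it vanishes if and only if a single $z$ attains $\min_z c_z(s)$ for every $s$ in the support of $p$.
\begin{itemize}
\item If such a maximizer $p^*$ of the UB1 LP exists, then $\Omega\ge R(y;p^*)=\Omega_{\mathrm{UB1}}$.
\item Conversely, if $\Omega=\Omega_{\mathrm{UB1}}$, take $p^\dagger$ attaining $\Omega$ (it exists by Proposition~\ref{prop:vertex}). Then $\Omega_{\mathrm{UB1}}=R(y;p^\dagger)\le\sum_s p^\dagger_s g_s\le\Omega_{\mathrm{UB1}}$. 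Hence $p^\dagger$ is a UB1 maximizer with zero EVPI.
\end{itemize}
The remarks on subsets and on solver bounds follow from the direction of the inequality $c_z(s)\ge\min_z c_z(s)$: any quantity substituted for the minimum must lie below it.

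\textbf{Part (iii).} By Lemma~\ref{lem:convex-regret}, $R(y;\cdot)$ is convex, so its maximum over a polytope containing $\cU_\rho$ is attained at a vertex $p_v$. For point masses $\delta_s$ one gets $R(y;\delta_s)=g_s(y)$, and $\sum_s p_sg_s\le\max_s g_s$, so (ii) is tighter.

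\textbf{Part (iv).} This is the main obstacle, since it requires a Lipschitz estimate for the LP value $Q(y;d,t)$ in its coefficients.
\begin{itemize}
\item \emph{Demand perturbation.} By LP sensitivity, $\partial Q/\partial d_j$ is the demand-row dual. Along any path between states this dual is at most $\bar\pi$ in absolute value, since one extra unit can always be sent to shortage at cost $p_j$, and is nonnegative.
\item \emph{Travel-time perturbation.} This is the delicate step. The coefficients $\ell_{ij}(t)=\gamma_{ij}t$ and $a_{ij}(t)=t+\bar s$ change both the objective and the constraint matrix. By envelope-type bounds on parametric LPs, the derivative of $Q$ in $t_{ij}$ is $q_{ij}(\gamma_{ij}+\mu_i)$, with $\mu_i\ge0$ the capacity dual and $q_{ij}\le d_j\le d_j^u$. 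I would bound $\mu_i\le\bar\mu_i$ from dual feasibility: the reduced-cost condition $\gamma_{ij}t_{ij}+\mu_i a_{ij}\ge\lambda_j-\dots$ together with $\lambda_j\le p_j$ caps the value of capacity at $(p_j-\gamma_{ij}t)_+/a_{ij}$, and maximizing over $j$ with $a$ at its lower bound gives $\bar\mu_i$, as Appendix~\ref{app:service} asserts.
\item To make this rigorous without differentiability, I would compare optimal solutions at two states directly. Take an optimal $q$ at $\xi$ and scale it to feasibility at $\xi'$, or use weak duality with the optimal dual at one state evaluated at the other.
\end{itemize}
Summing with the metric scales gives $|C(y,\xi)-C(y,\xi')|\le\bar\Lambda d(\xi,\xi')$, since $F$ cancels, and the differential is $2\bar\Lambda$-Lipschitz. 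Then, for $p=p_\theta$, write $\E_p\Delta-\E_{\wh p}\Delta=\sum_{k,s}\theta_{ks}(\Delta(\xi^s)-\Delta(\wh\xi_k))\le2\bar\Lambda\sum\theta_{ks}d_{ks}\le2\bar\Lambda\rho$. SAA optimality gives $\E_{\wh p}\Delta_{y_{\mathrm{SAA}},z}\le0$, which yields the final bound on $\Omega(y_{\mathrm{SAA}})$.

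\textbf{Part (v).} Under the affine model $C(y,Z)=c_y+a_y^\top Z$, I would invoke the covariance-metric duality of Appendix~\ref{app:affine}: $\sup$ of a linear functional $a^\top Z$ over the ball equals its mean plus $\rho\sqrt{a^\top\Sigma a}=\rho\,\sd(a^\top Z)$. Applying this to $a_y$ gives $\Pi$. Applying it to $a_{y}-a_z$, combined with $\E_{\wh p}\Delta\le0$ for SAA, bounds each comparator term. Divide the two, and note that $\rho$ cancels.
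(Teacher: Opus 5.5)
Parts (i)--(iii) and (v) follow the paper's argument. Your zero-EVPI proof of the equality clause in (ii), argued in both directions, is if anything more complete than the paper's one-line assertion.

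The gap is in (iv), at the step you yourself flag as delicate. You name two ways to make the Lipschitz estimate rigorous but carry out neither, and the dual one fails as stated. The dual feasible set $\{(\pi,\mu):\mu\ge0,\ \pi_j\le p_j,\ \pi_j-\mu_ia_{ij}(t_{ij})\le\ell_{ij}(t_{ij})\ \forall i,j\}$ depends on the travel times. An optimal dual at $\xi'$ is therefore in general infeasible at $\xi$, and weak duality cannot be invoked with it directly. The envelope formula is only heuristic as written: $t$ enters the constraint matrix, so both the differentiability of $Q$ and the integration along a path would need justification you do not give. Separately, $\mu_i\le\bar\mu_i$ does not follow from dual feasibility, which bounds $\mu_i$ from \emph{below}; at a post with $y_i=0$ the optimal $\mu_i$ is unbounded. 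The bound holds only for a suitably selected optimal dual, obtained by clipping as in Appendix~\ref{app:service}. Inside your derivative heuristic it can instead come from complementary slackness on lanes with $q_{ij}>0$.

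The missing idea, which is the paper's proof, is a feasibility repair. Take an optimal dual $(\pi,\mu)$ at $\xi'$ with $0\le\pi_j\le\bar\pi$ and $\mu_i\le\bar\mu_i$, keep $\mu$, and lower each $\pi_j$ by $\delta_j=\max_i[\ell_{ij}(t'_{ij})-\ell_{ij}(t_{ij})+\mu_i(a_{ij}(t'_{ij})-a_{ij}(t_{ij}))]^+$. Then $(\pi-\delta,\mu)$ is feasible at $\xi$, and weak duality gives $Q(y;\xi)\ge\pi^\top d-\mu^\top y-\sum_j\delta_jd_j$. Hence $Q(y;\xi')-Q(y;\xi)\le\pi^\top(d'-d)+\sum_j\delta_jd_j$. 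The bound $\delta_j\le\sum_i(\gamma_{ij}+\bar\mu_i)|t_{ij}-t'_{ij}|$, together with $d_j\le d^u_j$, yields exactly the terms of $\bar\Lambda$.

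Your primal route also works if executed, and it needs no dual selection. From an optimal $(q,u)$ at $\xi$, move $x_{ij}=(t'_{ij}-t_{ij})^+q_{ij}/a_{ij}(t'_{ij})\le q_{ij}$ units on each lane into shortage; this restores every capacity row at $t'$. The lane's cost then changes by $\gamma_{ij}(t'_{ij}-t_{ij})q_{ij}+(p_j-\gamma_{ij}t'_{ij})x_{ij}\le(\gamma_{ij}+\bar\mu_i)|t'_{ij}-t_{ij}|\,d^u_j$, because $(p_j-\gamma_{ij}t)_+/a_{ij}(t)$ is nonincreasing in $t$. Demand increases then go to shortage at cost at most $\bar\pi$ per unit, and decreases only lower cost.

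A minor omission: (v) also asserts the bounded-support clause. It follows because restricting the support can only shrink the supremum in the numerator.
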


Part (ii) is the workhorse of the sandwich. Its inputs, the best design in each recorded state, are quantities a planner would want for scenario analysis anyway, and they are computed as $S\times|\cY|$ recourse solves with no coupling across states. The same solves yield the pre-flight statistic
$D=\max_s g_s(y_{\mathrm{SAA}})\,/\,\bar c_{y_{\mathrm{SAA}}}$,
which answers the prior question ``can positioning matter here at all?'' If $D$ is small at every setting of the design set, no radius and no coupling will make a different design substantially better, and the diagnosis is in the model, not in the data (Remark~\ref{rem:needs}).

\subsection{How the regret-to-premium ratio moves with the radius}
\label{sec:law-radius}

The bounds in the previous subsection are stated at a fixed radius $\rho$. In practice the radius is a calibration choice: the operator sets it to a fraction of nominal cost, or to a statistical confidence level, and the resulting ratio depends on that choice. This subsection gives the exact shape of $\Omega$, $\Pi$, and $\omega$ as functions of $\rho$ for discrete design sets. Two results stand out. First, there is a sharp threshold $\rho^\sharp$ below which the sample-average design is \emph{exactly} regret-robust: no law in the ball makes any alternative better, and the selected SAA design is a DRRO optimizer. Second, above the threshold the ratio is bounded by a slope ratio that does not involve $\rho$. Both results are specific to discrete design sets; in the smooth case the DRRO value collapses gradually, at rate $o(\rho)$, and no hard threshold appears \citep{fiechtner2025}.

Here, each observation $\wh\xi_k$ has a home state $s(k)$ in the state set (possibly itself) and carries weight $w_k$. Observation rows of zero weight are removed before the slopes are defined, and the states they could reach are retained as destinations; the remaining weights are positive. For a vector $v\in\R^S$ on the state space, define the \emph{transport value function}: the maximum gain the adversary can extract by moving mass from observations to states, paying the ground distance per unit moved,
\begin{equation}
\Phi_v(\rho)
\;:=\;\max_{\theta\in\Theta_\rho}
\sum_{k,s}\theta_{ks}\,v_s
\;-\;\sum_k w_k\,v_{s(k)},
\label{eq:phi-def}
\end{equation}
and the \emph{steepest pair slope}, the largest gain per unit of transport cost over all admissible moves,
\begin{equation}
\sigma_v
\;:=\;\max_{k,\,s:\,d_{ks}>0}
\frac{v_s-v_{s(k)}}{d_{ks}}.
\label{eq:sigma-def}
\end{equation}
When no positive-distance move remains---no pair with $d_{ks}>0$ and positive weight at its origin---the maximum in \eqref{eq:sigma-def} is empty; the transport gain is then zero for every budget, and we set $\sigma_v=0$.
Finally, let $\bar\rho_v:=w_{k^\circ}\,d_{k^\circ s^\circ}$ be the exhaustion budget of the chosen steepest pair $(k^\circ,s^\circ)$: the budget at which that pair's mass runs out; steepest pairs tied at the same slope can extend the initial linear segment beyond it.

\begin{lemma}[Properties of the transport value function]
\label{lem:phi}
The function $\Phi_v(\rho)$ is concave, piecewise linear, and nondecreasing in $\rho$, with $\Phi_v(0)=0$. It is bounded above by the linear function $\sigma_v^+\,\rho$, where $\sigma_v^+=\max(\sigma_v,0)$. More precisely:
\begin{enumerate}[label=(\roman*),leftmargin=1.5em]
\item If $\sigma_v>0$, then $\Phi_v(\rho)=\sigma_v\,\rho$ on the interval $[0,\,\bar\rho_v]$: the adversary moves mass along the single steepest pair and saturates it at $\bar\rho_v$.
\item If $\sigma_v\le 0$, then $\Phi_v(\rho)\equiv 0$ for all $\rho$: every admissible move has nonpositive gain, so mass stays at its home state.
\end{enumerate}
\end{lemma}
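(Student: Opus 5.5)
The plan is to rewrite $\Phi_v$ as a parametric linear program in which $\rho$ enters only the right-hand side. Everything in the lemma then follows from LP duality and two elementary comparisons, one with the home assignment and one with the steepest pair.

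\emph{Reformulation.} Each $\theta\in\Theta_\rho$ has row sums $\sum_s\theta_{ks}=w_k$. Hence
\[
\Phi_v(\rho)=\max_{\theta\in\Theta_\rho}\sum_{k,s}\theta_{ks}\bigl(v_s-v_{s(k)}\bigr),
\]
so the objective measures only the gain of each unit of mass relative to its home state. I will use the standing convention that the ground distance vanishes exactly on the diagonal: $d_{ks}=0$ if and only if $s=s(k)$. This holds whenever $d$ is a metric on the state set and the home state is the recorded copy of the observation. The step I expect to need the most care is exactly this convention. If some off-home state sat at distance zero from $\wh\xi_k$ with $v_s>v_{s(k)}$, the adversary could collect a positive gain at zero cost, and both $\Phi_v(0)=0$ and the bound $\sigma_v^+\rho$ would fail. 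I would therefore state the convention explicitly, or equivalently merge zero-distance states into their home state.

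\emph{Global properties.} Under this convention, the home assignment $\theta_{k s(k)}=w_k$ uses zero budget and is feasible for every $\rho\ge0$. It gives $\Phi_v(\rho)\ge0$. At $\rho=0$ it is the only feasible point, since every positive $\theta_{ks}$ with $s\ne s(k)$ costs budget; this gives $\Phi_v(0)=0$. Monotonicity follows because $\Theta_\rho$ grows with $\rho$. Concavity and piecewise linearity follow from duality. For $\rho\ge0$ the feasible set is a nonempty compact polytope, and LP duality gives
\[
\Phi_v(\rho)=\min_{\lambda\ge0}\Bigl[\lambda\rho+\sum_k w_k\max_s\bigl(v_s-v_{s(k)}-\lambda d_{ks}\bigr)\Bigr].
\]
The bracket is affine in $\rho$ for each fixed $\lambda$. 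The minimum is attained at one of the finitely many vertices of the dual polyhedron, so $\Phi_v$ is the minimum of finitely many affine functions and is therefore concave and piecewise linear.

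\emph{Upper bound.} Split the reformulated objective into home terms and off-home terms. Home terms contribute zero. For each off-home term, $d_{ks}>0$ and the definition \eqref{eq:sigma-def} gives $v_s-v_{s(k)}\le\sigma_v d_{ks}\le\sigma_v^+d_{ks}$. Summing these and using the budget constraint yields $\Phi_v(\rho)\le\sigma_v^+\sum_{k,s}d_{ks}\theta_{ks}\le\sigma_v^+\rho$. If no positive-distance move exists, only home terms remain and the gain is identically zero, consistent with the convention $\sigma_v=0$.

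\emph{Parts (i) and (ii).} Part (ii) is immediate: when $\sigma_v\le0$ we have $\sigma_v^+=0$, so the upper bound $0$ meets the lower bound $0$ from the home assignment. For part (i), take $\rho\in[0,\bar\rho_v]$ and move mass $t=\rho/d_{k^\circ s^\circ}$ from $\wh\xi_{k^\circ}$ to $s^\circ$. This is feasible because $t\le w_{k^\circ}$ exactly when $\rho\le\bar\rho_v$. All other mass stays at home. This plan spends budget $\rho$ and gains $t\,(v_{s^\circ}-v_{s(k^\circ)})=\sigma_v\rho$, which matches the upper bound. Hence $\Phi_v(\rho)=\sigma_v\rho$ on $[0,\bar\rho_v]$.
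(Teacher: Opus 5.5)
Your proof is correct and follows the paper's argument. Both use the same rewriting of the objective as $\sum_{k,s}\theta_{ks}(v_s-v_{s(k)})$, the same pairwise bound $v_s-v_{s(k)}\le\sigma_v^+d_{ks}$ summed against the budget, and the same steepest-pair construction for (i); for concavity and piecewise linearity you write out the dual explicitly, where the paper simply cites the parametric-LP value-function property. Your convention that $d_{ks}=0$ exactly when $s=s(k)$ is a hypothesis the paper's proof uses silently, both for $\Phi_v(0)=0$ and for the pairwise bound on zero-distance pairs, so stating it is a genuine improvement rather than a detour.
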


We now specialize the transport value function to the two quantities of interest. Write $\Delta_z:=c_{y_{\mathrm{SAA}}}-c_z$ for the cost vector of design $y_{\mathrm{SAA}}$ relative to comparator $z$, and $g_z:=\E_{\wh p}[C(z)-C(y_{\mathrm{SAA}})]\ge 0$ for the nominal margin: the expected cost by which $z$ already loses to the SAA design under the empirical law. Then the worst-case regret and the worst-case premium of the SAA design take the form
\begin{equation}
\Omega(y_{\mathrm{SAA}};\rho)
\;=\;\max_{z\in\cY}\bigl[\Phi_{\Delta_z}(\rho)-g_z\bigr],
\qquad
\Pi(y_{\mathrm{SAA}};\rho)
\;=\;\Phi_{c_{y_{\mathrm{SAA}}}}(\rho),
\label{eq:omega-phi}
\end{equation}
where each $\Phi$ term is one transport LP. The structure is a race, comparator by comparator: the adversary's budget $\rho$ generates gain at slope $\sigma_{\Delta_z}$, and the comparator $z$ has a head start of $g_z$. For $\sigma_{\Delta_z}>0$, $g_z/\sigma_{\Delta_z}$ is a lower bound on the crossing radius; the exact crossing radius is determined by the LP below.

\begin{theorem}[Sharp SAA threshold]
\label{thm:saa-sharp}
Let $y=y_{\mathrm{SAA}}$ and define the threshold
\[
\rho^\sharp(y)
\;:=\;\inf\bigl\{\rho\ge 0:\ \Omega(y;\rho)>0\bigr\},
\]
the infimum of radii giving positive regret; it is a threshold level, not an attained smallest such radius. A minimum over an empty index set is $+\infty$.

\begin{enumerate}[label=(\roman*),leftmargin=1.9em]
\item \textbf{Exact protection below the threshold.} $\Omega(y;\rho)=0$ if and only if $\rho\le\rho^\sharp$. On the interval $[0,\,\rho^\sharp]$ the DRRO optimizer set is contained in the SAA optimizer set and contains $y_{\mathrm{SAA}}$: below the threshold the sample-average design is exactly, not approximately, regret-robust.

\item \textbf{Per-comparator crossing.} $\rho^\sharp=\min_{z\in\cY}\rho^\sharp_z$, where $\rho^\sharp_z=+\infty$ whenever $\max_s\Delta_z(s)\le0$, and otherwise $\rho^\sharp_z$ is the smallest budget that erases the nominal margin of comparator $z$:
\[
\rho^\sharp_z
\;:=\;\min\Bigl\{\sum_{k,s}d_{ks}\,\theta_{ks}\;:\;
\theta\ge 0,\ \sum_s\theta_{ks}\le w_k\ \ \forall k,
\ \sum_{k,s}\theta_{ks}\bigl(\Delta_z(s)-\Delta_z(\wh\xi_k)\bigr)\ge g_z
\Bigr\},
\]
with $\rho^\sharp_z=+\infty$ when $g_z$ exceeds the total gain available to $\Delta_z$ over all budgets. If every comparator has $\max_s\Delta_z(s)\le0$, $\rho^\sharp=+\infty$.

\item \textbf{Bounding chain.} If $\bar\Lambda=0$, or the design set is a singleton, then $\Omega(y;\rho)\equiv0$ and $\rho^\sharp=+\infty$; the ratios below are not formed. Otherwise,
\[
\rho^\sharp
\;\ge\;
\rho^{**}
:=\min_{z\ne y:\,\sigma_{\Delta_z}>0}\frac{g_z}{\sigma_{\Delta_z}}
\;\ge\;
\rho^*
:=\min_{z\ne y:\,L_z>0}\frac{g_z}{L_z}
\;\ge\;
\rho^*_{\mathrm{cheap}}
:=\frac{g_{(2)}}{2\bar\Lambda},
\]
where $L_z$ is the Lipschitz constant of $\Delta_z$ in the ground metric, $g_{(2)}$ is the smallest $g_z$ over $z\ne y$, including zero, $\bar\Lambda$ is the decision-free Lipschitz bound of Proposition~\ref{prop:sandwich}(iv), and the minima run over their stated index sets; comparators with $L_z=0$ are omitted from $\rho^*$. A nominal tie ($g_z=0$ with $\sigma_{\Delta_z}>0$) gives $\rho^\sharp=0$: the SAA design is not exactly protected at any positive radius.
\end{enumerate}
\end{theorem}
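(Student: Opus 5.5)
The backbone is the per-comparator representation \eqref{eq:omega-phi}, $\Omega(y;\rho)=\max_z[\Phi_{\Delta_z}(\rho)-g_z]$, combined with Lemma~\ref{lem:phi}. I would first verify \eqref{eq:omega-phi} directly from Lemma~\ref{lem:convex-regret}. The key identity is $\sum_{k,s}\theta_{ks}\Delta_z(s)=\E_{\wh p}\Delta_z+\bigl[\sum\theta_{ks}\Delta_z(s)-\sum_k w_k\Delta_z(s(k))\bigr]$, and swapping the two maxima (over $z$ and over $\theta$) is legitimate. Each term $\rho\mapsto\Phi_{\Delta_z}(\rho)-g_z$ is nondecreasing, concave and piecewise linear, with value $-g_z\le 0$ at $\rho=0$. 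Including $z=y$, for which $\Delta_y=0$ and $g_y=0$, gives $\Omega\ge0$. Since $\Omega$ is then a nondecreasing continuous function of $\rho$, the set $\{\rho:\Omega=0\}$ is a closed initial interval $[0,\rho^\sharp]$; when it is unbounded, $\rho^\sharp=+\infty$.

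\emph{Part (i).} The ``if and only if'' follows from the interval structure just described. For the optimizer-set claim, fix $\rho\le\rho^\sharp$. Then $y_{\mathrm{SAA}}$ achieves worst-case regret $0$, which is the minimal possible value, so it is a DRRO optimizer. Any other DRRO optimizer $y'$ must also have $\Omega(y')=0$, and in particular $R(y';\wh p)=0$ because $\wh p\in\cU_\rho$. Hence $y'$ is SAA-optimal.

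\emph{Part (ii).} I would show that $\Phi_{\Delta_z}(\rho)>g_z$ holds exactly when $\rho$ exceeds the LP value $\rho^\sharp_z$. The LP with inequality rows $\sum_s\theta_{ks}\le w_k$ is equivalent to $\Theta_\rho$ once the leftover mass is kept at home at zero distance. So $\rho^\sharp_z$ is the least budget at which $\Phi_{\Delta_z}(\rho)\ge g_z$. This minimum is attained by compactness and is $+\infty$ when the LP is infeasible.

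The subtle point is converting ``$\ge g_z$ at $\rho^\sharp_z$'' into ``$>g_z$ just beyond it.'' This requires $\Phi$ to be strictly increasing past $\rho^\sharp_z$ whenever some state has $\Delta_z(s)>0$. I expect this to be the main obstacle. The approach would be to argue via concavity that $\Phi$ is nondecreasing and stays flat only once every positive-gain move is exhausted; at that point the LP constraint cannot be newly met. The degenerate case $g_z=0$ with a flat $\Phi$ needs separate care. When $\max_s\Delta_z(s)\le0$, every $\theta$ gives $\sum\theta\Delta_z\le0\le g_z$, with $R\le0$, so that comparator never produces positive regret. Taking the minimum over $z$ then gives $\rho^\sharp=\min_z\rho^\sharp_z$.

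\emph{Part (iii).} If $\bar\Lambda=0$, every $\Delta_z$ is constant on the support. Proposition~\ref{prop:sandwich}(iv) then gives $\Omega\le2\rho\bar\Lambda=0$, and the singleton case is trivial. For the chain of inequalities:
\begin{itemize}
\item By Lemma~\ref{lem:phi}, $\Phi_{\Delta_z}\le\sigma^+_{\Delta_z}\rho$. So a comparator with $\sigma\le0$ never crosses, and one with $\sigma>0$ cannot cross before $g_z/\sigma$. This gives $\rho^\sharp\ge\rho^{**}$.
\item Each move gains at most $L_z d_{ks}$, so $\sigma_{\Delta_z}\le L_z$, hence $g_z/\sigma\ge g_z/L_z$. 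Any comparator with $\sigma>0$ also has $L_z>0$, so the minimum defining $\rho^*$ runs over a superset of indices; this gives $\rho^{**}\ge\rho^*$.
\item Finally $L_z\le2\bar\Lambda$ and $g_z\ge g_{(2)}$, giving $\rho^*\ge\rho^*_{\mathrm{cheap}}$.
\end{itemize}
For the tie case, $g_z=0$ with $\sigma>0$ makes $\Phi=\sigma\rho>0$ on $(0,\bar\rho_v]$, so $\Omega>0$ at every positive radius and $\rho^\sharp=0$.
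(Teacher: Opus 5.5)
Your backbone (the representation $\Omega=\max_z[\Phi_{\Delta_z}-g_z]$, with $z=y$ giving $\Omega\ge0$), part (i), the case $\max_s\Delta_z(s)\le0$, and the chain in (iii) all match the paper's proof. The gap is the step you flag yourself in (ii), and it is not closed.

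Concavity and monotonicity of $\Phi_{\Delta_z}$ do not, by themselves, exclude one bad case: $\Phi_{\Delta_z}$ reaches $g_z$ exactly where it enters its terminal plateau. Then $\Phi_{\Delta_z}\ge g_z$ holds from the LP value onward, but $\Phi_{\Delta_z}>g_z$ never holds. Comparator $z$ would never produce positive regret, and its crossing radius would be $+\infty$, not the LP value. Your remark that on the plateau ``the LP constraint cannot be newly met'' concerns a plateau \emph{below} $g_z$, so it does not touch this case. The case you single out, $g_z=0$, is not where the difficulty lies.

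The missing input is the hypothesis $\max_s\Delta_z(s)>0$, used quantitatively. With unlimited budget every row can be sent to a maximizing state $s^+$. Using $\sum_kw_k=1$ and $\sum_kw_k\Delta_z(s(k))=\E_{\wh p}\Delta_z=-g_z$, this gives
\[
\max_{\rho\ge0}\Phi_{\Delta_z}(\rho)=\sum_k w_k\bigl(\Delta_z(s^+)-\Delta_z(s(k))\bigr)=g_z+\max_s\Delta_z(s)>g_z .
\]
Now suppose $\Phi_{\Delta_z}(\rho_1)=g_z$ for some $\rho_1>\rho^\sharp_z$. Monotonicity makes $\Phi_{\Delta_z}$ constant on $[\rho^\sharp_z,\rho_1]$. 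A concave nondecreasing function that is constant on a nondegenerate interval stays constant from there on, which contradicts the plateau value above. Hence $\Phi_{\Delta_z}(\rho)>g_z$ for every $\rho>\rho^\sharp_z$.

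The same computation shows that the LP in (ii) is always feasible when $\max_s\Delta_z(s)>0$, and that $g_z=0$ needs no separate treatment. The paper closes this step differently: it mixes the LP-optimal crossing law with a point mass at $s^+$ and computes the mixture's budget. Both routes rest on the same fact, that sending all mass to $s^+$ yields a gain strictly above $g_z$. Your concavity route is valid once you add this fact.
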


Below $\rho^\sharp$ the sample-average design is not approximately but exactly regret-robust: no law in the ball, however adversarially chosen, makes any alternative design better in expectation, and the selected SAA design is a DRRO optimizer. This is the discrete counterpart of the gradual $o(\rho)$ collapse in the smooth setting \citep{fiechtner2025}: with finitely many designs the nearest competitor sits at a positive expected-cost distance when the empirical optimizer is unique, and the collapse becomes a hard threshold rather than a rate. \citet{fiechtner2025} also show exact all-radius optimality of the empirical optimizer in a quadratic model; that result concerns coincidence of the optimizing decision, possibly with positive regret, whereas Theorem~\ref{thm:saa-sharp} asserts zero regret on a neighborhood of laws for finite design sets. In practice the chain of part (iii) gives three diagnostics at decreasing cost. The exact threshold is the minimum of the per-comparator LPs of (ii) over the enumerated design set; $\rho^{**}$ is a lower bound when its minimum runs over all $z\ne y$; computed over a comparator pool it only upper-bounds the full-set $\rho^{**}$; and $\rho^*_{\mathrm{cheap}}=g_{(2)}/(2\bar\Lambda)$ uses only the smallest nominal margin and the decision-free Lipschitz constant. The intermediate bound $\rho^*$ requires every comparator's own Lipschitz constant and is not computed at the scale of the empirical instances.

\begin{theorem}[The law of $\Omega$, $\Pi$, $\omega$ in the radius]
\label{thm:law-radius}
Let $y=y_{\mathrm{SAA}}$, assume the weights of the remaining observation rows are positive (Section~\ref{sec:law-radius}), and write $\sigma_\Pi:=\sigma_{c_y}$ and $\bar\rho_\Pi:=\bar\rho_{c_y}$ for the steepest-pair slope and exhaustion budget of the SAA design's own cost vector.

\begin{enumerate}[label=(\alph*),leftmargin=1.9em]
\item \textbf{Premium.} The worst-case premium $\Pi(y;\rho)=\Phi_{c_y}(\rho)$ is concave, piecewise linear, and nondecreasing. If $\sigma_\Pi>0$, then
$\Pi(y;\rho)=\sigma_\Pi\,\rho$ for
$0\le\rho\le\bar\rho_\Pi$, and
$\Pi(y;\rho)\le\sigma_\Pi\,\rho$ for all $\rho\ge0$.
If $\sigma_\Pi\le0$, then $\Pi(y;\rho)=0$ for all $\rho\ge0$.

\item \textbf{Worst-case regret.} The worst-case regret $\Omega(y;\rho)$ is piecewise linear in $\rho$, identically zero on $[0,\rho^\sharp]$, and bounded above by the envelope of the per-comparator lines:
\[
\Omega(y;\rho)
\;\leq\;
\max_{z\in\cY}\bigl(\sigma_{\Delta_z}^{+}\,\rho-g_z\bigr)
\;\leq\;
\rho\,\max_{z\in\cY}\sigma_{\Delta_z}^{+}
\;\leq\;
2\rho\,\bar\Lambda.
\]
Each term in the maximum is one transport LP; the final bound uses the decision-free Lipschitz constant of Proposition~\ref{prop:sandwich}(iv).

\item \textbf{Regret-to-premium ratio.} When $\sigma_\Pi>0$, the ratio $\omega(y;\rho)=\Omega(y;\rho)/\Pi(y;\rho)$ is continuous on $(0,\infty)$, identically zero on $(0,\rho^\sharp]$, and, when $0<\rho^\sharp<\infty$, has one-sided right-derivative at $\rho^\sharp$ equal to $\lambda^\sharp/\Pi(y;\rho^\sharp)$, where $\lambda^\sharp=\max_{z\in Z^\sharp}\lambda_z$, $Z^\sharp=\{z:\rho^\sharp_z=\rho^\sharp\}$ is the set of comparators whose crossing radius of Theorem~\ref{thm:saa-sharp}(ii) equals $\rho^\sharp$, and $\lambda_z$ is the smallest optimal dual multiplier on the budget row of $\Phi_{\Delta_z}$ at $\rho^\sharp$. Moreover,
\[
\omega(y;\rho)
\;\leq\;
\frac{\max_{z}\sigma_{\Delta_z}^{+}}{\sigma_\Pi}
\;\cdot\;
\frac{\rho}{\min(\rho,\,\bar\rho_\Pi)},
\]
so in particular, for every $0<\rho\le\bar\rho_\Pi$,
\[
\omega(y;\rho)
\;\leq\;
\frac{\max_{z\in\cY}\sigma_{\Delta_z}^{+}}{\sigma_\Pi},
\]
an upper bound that does not involve $\rho$.
\end{enumerate}
\end{theorem}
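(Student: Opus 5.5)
The argument rests on the representation \eqref{eq:omega-phi} together with Lemma~\ref{lem:phi} and Theorem~\ref{thm:saa-sharp}. Each piece reduces to a statement about the parametric LP $\Phi_v(\rho)$.

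\textbf{Part (a).} This is Lemma~\ref{lem:phi} applied to $v=c_y$. The only point to check is the identification $\Pi(y;\rho)=\Phi_{c_y}(\rho)$. It holds because the reference term $c_y^\top\wh p$ equals $\sum_k w_k c_y(s(k))$. Concavity, piecewise linearity and monotonicity follow from parametric LP theory: the right-hand side $\rho$ enters only the budget row, and the feasible set grows with $\rho$. The linear segment on $[0,\bar\rho_\Pi]$ and the envelope $\sigma_\Pi\rho$ are parts (i)--(ii) of the lemma.

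\textbf{Part (b).} Write $\Omega(y;\rho)=\max_z[\Phi_{\Delta_z}(\rho)-g_z]$. This is a finite maximum of piecewise-linear functions, hence piecewise linear. It vanishes on $[0,\rho^\sharp]$ by Theorem~\ref{thm:saa-sharp}(i). The first inequality uses $\Phi_{\Delta_z}\le\sigma^+_{\Delta_z}\rho$ from Lemma~\ref{lem:phi}. The second drops $g_z\ge0$. For the third I bound each slope: $\Delta_z$ is $2\bar\Lambda$-Lipschitz by Proposition~\ref{prop:sandwich}(iv), so every ratio $(\Delta_z(s)-\Delta_z(s(k)))/d_{ks}$ is at most $2\bar\Lambda$.

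\textbf{Part (c), continuity and vanishing.} Continuity on $(0,\infty)$ follows because $\Omega$ and $\Pi$ are continuous and $\Pi(\rho)>0$ there. Indeed, if $\sigma_\Pi>0$, then $\Pi(\rho)=\sigma_\Pi\rho>0$ on $(0,\bar\rho_\Pi]$, and $\Pi$ is nondecreasing afterwards. The ratio vanishes on $(0,\rho^\sharp]$ because $\Omega$ does.

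\textbf{Part (c), right-derivative.} Since $\Omega(\rho^\sharp)=0$, the quotient rule reduces the right-derivative of $\omega$ at $\rho^\sharp$ to $\Omega'_+(\rho^\sharp)/\Pi(\rho^\sharp)$. For each $z$, the function $\Phi_{\Delta_z}$ is concave. Its right-derivative at $\rho$ equals the smallest optimal dual multiplier on the budget row, by standard LP sensitivity: the right-derivative of a concave parametric LP value in a right-hand side is the minimum over optimal dual solutions.

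Comparators with $\rho^\sharp_z>\rho^\sharp$ have $\Phi_{\Delta_z}(\rho)-g_z<0$ in a right-neighborhood of $\rho^\sharp$, by continuity and the strict negativity at $\rho^\sharp$. I need to verify that strict negativity, since $\rho^\sharp_z$ is the first budget at which the gap closes. The remaining comparators, those in $Z^\sharp$, have value zero at $\rho^\sharp$. The right-derivative of a max of functions equal at a point is the max of their right-derivatives, which gives $\lambda^\sharp$.

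The main obstacle is making this rigorous. Two points need care. First, the LP defining $\rho^\sharp_z$ in Theorem~\ref{thm:saa-sharp}(ii) must coincide with the first zero of $\Phi_{\Delta_z}-g_z$, so that the active set is exactly $Z^\sharp$. Second, I must exclude a $z$ whose value reaches zero at $\rho^\sharp$ without having crossing radius $\rho^\sharp$. Both follow from the monotonicity of $\Phi$.

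\textbf{Part (c), ratio bound.} Combine (b) with a lower bound on $\Pi$. By concavity and $\Pi(0)=0$, for $\rho\ge\bar\rho_\Pi$,
\[
\Pi(\rho)\ge\Pi(\bar\rho_\Pi)=\sigma_\Pi\bar\rho_\Pi,
\]
using monotonicity. For $\rho\le\bar\rho_\Pi$ we have $\Pi(\rho)=\sigma_\Pi\rho$. Both cases give $\Pi(\rho)\ge\sigma_\Pi\min(\rho,\bar\rho_\Pi)$. Dividing $\Omega\le\rho\max_z\sigma^+_{\Delta_z}$ by this bound gives the stated inequality. The $\rho$-free form is the case $\rho\le\bar\rho_\Pi$.
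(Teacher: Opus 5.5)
Your proposal is correct and follows the paper's proof essentially step for step: Lemma~\ref{lem:phi} for (a) and (b), the $2\bar\Lambda$ Lipschitz bound on the pair slopes, LP sensitivity with the active-set maximum and the quotient rule for the right derivative, and $\Pi\ge\sigma_\Pi\min(\rho,\bar\rho_\Pi)$ for the ratio bound. One correction is needed in the right-derivative step. Comparators with $\rho^\sharp_z=+\infty$ because $\max_s\Delta_z(s)\le0$ need not be strictly negative at $\rho^\sharp$, and they cannot be excluded from the active set; this includes $z=y$ itself, which contributes the identically zero function to the maximum. They are harmless, however: $\Phi_{\Delta_z}-g_z\le\max_s\Delta_z(s)\le0$ and is nondecreasing, so their right slope is $0$, and since every $\lambda_z\ge0$ they leave $\lambda^\sharp$ unchanged (the paper's proof also passes over this point).
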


Part (c) has a clean reading. Up to the exhaustion budget $\bar\rho_\Pi$, the regret-to-premium ratio is bounded by a ratio of two slopes, and neither slope involves the radius. The numerator measures how fast the adversary can widen the gap between the proposed design and its best rival; the denominator measures how fast it can raise the proposed design's own cost. The bound is radius-free on $0<\rho\le\bar\rho_\Pi$ and is a property of the cost matrix and the design set; $\omega$ itself varies with $\rho$ and need not be monotone (Proposition~\ref{prop:tree-correctness}(c)).

\begin{corollary}[Common-mode bound]
\label{cor:common-mode}
Suppose each design's cost splits as $c_y(s)=f(s)+e_y(s)$, where $f$ is the same for every design and $e_y$ is the idiosyncratic part. Let $\bar E:=\max_y\operatorname{Lip}_d(e_y)$ bound the idiosyncratic slopes and $\sigma_f$ be the steepest-pair slope of the common part. If the common part moves faster than any idiosyncratic part can ($\sigma_f>\bar E$), then for every $0<\rho\le\bar\rho_f$,
\[
\omega(y_{\mathrm{SAA}};\rho)\le\frac{2\bar E}{\sigma_f-\bar E}.
\]
For a general design $y$ (not necessarily the SAA design) the bound acquires an additive term $R(y;\wh p)/\rho$ in the numerator. (Proof in Appendix~\ref{app:proofs}.)
\end{corollary}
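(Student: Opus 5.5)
The plan is to combine the ratio bound of Theorem~\ref{thm:law-radius}(c) with a decomposition of the two slopes, $\sigma_\Pi$ and $\max_z\sigma^+_{\Delta_z}$, into common and idiosyncratic parts. Write $y=y_{\mathrm{SAA}}$. For any comparator $z$, the common part cancels in the differential, $\Delta_z=c_y-c_z=e_y-e_z$. For every admissible pair $(k,s)$ with $d_{ks}>0$,
\[
\frac{\Delta_z(s)-\Delta_z(s(k))}{d_{ks}}\le \operatorname{Lip}_d(e_y)+\operatorname{Lip}_d(e_z)\le 2\bar E .
\]
Taking the maximum over pairs and comparators gives $\max_z\sigma^+_{\Delta_z}\le 2\bar E$. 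This step is routine and only needs the Lipschitz property restricted to the finite support.

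The denominator needs a lower bound on the premium itself, not just on $\sigma_\Pi$. The reason is that Theorem~\ref{thm:law-radius}(c) only gives the slope-ratio bound on $[0,\bar\rho_\Pi]$, and the corollary is stated on $[0,\bar\rho_f]$, so I will argue directly with $\Phi$. The numerator is handled by Theorem~\ref{thm:law-radius}(b), which gives $\Omega(y;\rho)\le\rho\max_z\sigma^+_{\Delta_z}\le 2\bar E\rho$. For the premium, I will use the steepest pair $(k^\circ,s^\circ)$ of $f$. Moving mass $\theta=\rho/d_{k^\circ s^\circ}\le w_{k^\circ}$ along it is feasible exactly when $\rho\le\bar\rho_f$. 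This move yields
\[
\Pi(y;\rho)=\Phi_{c_y}(\rho)\ge \frac{\rho}{d_{k^\circ s^\circ}}\bigl[c_y(s^\circ)-c_y(s(k^\circ))\bigr]\ge \rho\,(\sigma_f-\operatorname{Lip}_d(e_y))\ge\rho(\sigma_f-\bar E).
\]
This is positive by the hypothesis $\sigma_f>\bar E$, so the ratio is well defined. Dividing the two bounds gives $\omega\le 2\bar E/(\sigma_f-\bar E)$.

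For a general design $y$, the SAA property $\E_{\wh p}\Delta_{y,z}\le0$ is no longer available. I will replace it with the sandwich estimate of Proposition~\ref{prop:sandwich}(iv), applied with the slope $2\bar E$ in place of $2\bar\Lambda$. The transport argument of Lemma~\ref{lem:phi} gives
\[
\max_p\E_p\Delta_{y,z}\le \E_{\wh p}\Delta_{y,z}+2\bar E\rho\le R(y;\wh p)+2\bar E\rho .
\]
Hence $\Omega(y)\le R(y;\wh p)+2\bar E\rho$. The premium lower bound above did not use optimality of $y$, so it still holds. Dividing produces the additive term $R(y;\wh p)/\rho$ in the numerator.

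The main obstacle I expect is bookkeeping around the transport value function when observations sit at home states $s(k)$. I have to check that the gain of the single-pair move is measured relative to the home state, so that it matches $\Phi$'s definition. I also have to check that the idiosyncratic Lipschitz bound is applied to exactly those pairs. Zero-weight rows must be excluded, consistently with Section~\ref{sec:law-radius}.
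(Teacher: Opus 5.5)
Your proposal is correct and follows the paper's proof essentially step by step. You cancel the common part to get $\sigma_{\Delta_z}\le 2\bar E$, which gives $\Omega\le 2\bar E\rho$. You then lower-bound the premium by $(\sigma_f-\bar E)\rho$ through a single move along the $f$-steepest pair, which is feasible exactly when $\rho\le\bar\rho_f$. For a general $y$, your Lipschitz transport estimate $\max_p\E_p\Delta_{y,z}\le R(y;\wh p)+2\bar E\rho$ makes explicit what the paper asserts in one line.
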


The corollary captures the regime in which an additive common cost component dominates the variation in cost. The common part moves every design's cost in the same direction at rate $\sigma_f$, while the idiosyncratic part, which is what distinguishes one design from another, moves at rate at most $\bar E$. The differential between designs is therefore small relative to each design's own cost, and the regret-to-premium ratio is correspondingly small at every radius up to $\bar\rho_f$. A multiplicative common slowdown is not covered.

\subsection{Which variables of the design carry regret}
\label{sec:binding-band}

The results above treat the design set as a single object. In positioning the set has two kinds of dimensions, the amount of capacity to field and the location at which to field it, and the regret adversary treats them very differently. To see the distinction, consider the capacity dimension in isolation: hold the positioning pattern fixed, let the total fleet $V$ be the only free variable, and write the cost in state $s$ as $c_V(s)=fV+p\,(R_s-V)^+$, where $R_s$ is the capacity the state requires and $f<p$ is the per-unit fleet cost. The sample-average design is then any minimizer of the empirical newsvendor cost on the fleet ladder.

\begin{lemma}[Window lemma]
\label{lem:window}
For $V_y<V_z$ and $\Delta V=V_z-V_y$,
\[
\Delta_{y,z}(s)=c_{V_y}(s)-c_{V_z}(s)=
\begin{cases}
-f\,\Delta V, & R_s\le V_y,\\[2pt]
-f\,\Delta V+p\,(R_s-V_y), & V_y<R_s<V_z,\\[2pt]
(p-f)\,\Delta V, & R_s\ge V_z.
\end{cases}
\]
The differential is constant on each side of the \emph{window} $(V_y,V_z)$, linear inside it, nondecreasing in $R_s$, and its total variation over the support is at most $p\,\Delta V$, with equality when the support reaches both sides of the window.
\end{lemma}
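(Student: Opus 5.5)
The plan is a direct case computation on the definition $c_V(s)=fV+p\,(R_s-V)^+$, split according to where $R_s$ sits relative to the two fleet levels $V_y<V_z$. First I write
\[
\Delta_{y,z}(s)=f(V_y-V_z)+p\bigl[(R_s-V_y)^+-(R_s-V_z)^+\bigr]=-f\,\Delta V+p\,h(R_s),
\]
where $h(r):=(r-V_y)^+-(r-V_z)^+$. Everything then reduces to evaluating $h$ on three ranges. If $r\le V_y$, both positive parts vanish and $h=0$. If $V_y<r<V_z$, only the first positive part survives and $h=r-V_y$. If $r\ge V_z$, both survive and $h=(r-V_y)-(r-V_z)=\Delta V$, which gives $-f\Delta V+p\Delta V=(p-f)\Delta V$. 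This reproduces the three displayed cases. The endpoint conventions are consistent, since the middle formula tends to $-f\Delta V$ at $r=V_y$ and to $(p-f)\Delta V$ at $r=V_z$, so $h$ is continuous.

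Next come the qualitative claims. Constancy on each side of the window and linearity inside (slope $p$ in $R_s$) are read off the cases. For monotonicity, I note that $h$ is a difference of two ramps with the same slope, the second one starting later. Its derivative is therefore $\mathbf 1\{V_y<r<V_z\}\ge 0$ almost everywhere, and since $h$ is continuous it is nondecreasing. Multiplying by $p>0$ and adding a constant preserves this.

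For the total-variation claim, monotonicity gives that the variation over the support $\{R_s\}$ equals $\max_s\Delta_{y,z}(s)-\min_s\Delta_{y,z}(s)=p\bigl[h(\max_sR_s)-h(\min_sR_s)\bigr]$. Since $0\le h\le\Delta V$, this is at most $p\,\Delta V$. If the support reaches both sides of the window, meaning some $R_s\le V_y$ and some $R_{s'}\ge V_z$, then $h$ attains both $0$ and $\Delta V$, so equality holds.

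The only delicate step is this interpretation of ``total variation over the support''. I would state explicitly that, for a monotone function on a finite set, it means the range. I would also note the converse direction for completeness: if the support misses one side, then $h$ does not attain one of its extremes and the inequality is strict. Beyond that everything is routine algebra, so I expect no real obstacle.
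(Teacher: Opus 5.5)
Your proposal is correct and follows essentially the paper's route: the paper writes $(R-V_y)^+-(R-V_z)^+=\min(R,V_z)-\min(R,V_y)$ and reads off the same three values $0$, $R-V_y$, $\Delta V$, then multiplies by $p$ and adds $-f\,\Delta V$. Your extra care with monotonicity and with reading ``total variation over the support'' as the range of a monotone function on a finite set makes explicit what the paper leaves implicit.
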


The lemma identifies the only region in which the regret adversary can earn. If the adversary moves probability mass between two states that both fall on the same side of the window $(V_y,V_z)$, the expected differential is unchanged: the differential is constant on each side. The only moves that matter are those that carry mass from below the window to above it, or from outside into the window itself, and each unit of mass carried from below to above gains exactly $p\,\Delta V$. Three consequences follow.

\begin{proposition}[Regimes of the capacity coordinate]
\label{prop:regimes}
Let $y=V_{\mathrm{SAA}}$ and write $m(\rho)=\max\{\sum_{k\neq s(k)}\theta_{ks}:\theta\in\Theta_\rho\}$ for the maximum mass the adversary can reassign within budget $\rho$.
\begin{enumerate}[label=(\roman*),leftmargin=1.9em]
\item \textbf{Margin regime.} If $\rho\le\rho^\sharp$, then $\Omega=0$ (Theorem~\ref{thm:saa-sharp}): no comparator can overtake the SAA design.
\item \textbf{Window regime.} For a comparator $V_z>V_{\mathrm{SAA}}$, set $\Delta V=V_z-V_{\mathrm{SAA}}$. The adversary's gain against this comparator is bounded by
\[
\Phi_{\Delta_z}(\rho)-g_z
\;\leq\;
\Delta V\,\bigl(p\,m(\rho)-f\bigr)
\;+\;p\,\E_{\wh p}\bigl[(R-V_{\mathrm{SAA}})^+\bigr],
\]
and the symmetric bound holds for $V_z<V_{\mathrm{SAA}}$ with $(p-f)$ in place of $f$ and $\E_{\wh p}[(V_{\mathrm{SAA}}-R)^+]$ in place of the last term. When $m(\rho)<f/p$ (right comparator) or $m(\rho)<(p-f)/p$ (left comparator), the right side is affine in $\Delta V$ with negative slope, so it is negative for all sufficiently large $\Delta V$. The maximizing comparator therefore sits at a finite distance from $V_{\mathrm{SAA}}$: the regret gain is zero at $\Delta V=0$, positive only while the window $(V_y,V_z)$ straddles reachable mass, and negative beyond that.
\item \textbf{Saturation regime.} If, for a set of designs $\cC\subseteq\cY$, the cost difference between any two members is the same in every state, then no law in the ball can make one member better than another, and $\Omega(y;\rho)=0$ for the SAA design in $\cC$ at every radius. Two situations produce this constancy. First, when the fleet is so large that no state ever exhausts capacity (or so small that every state exhausts it), the capacity coordinate no longer separates designs: all of them serve (or fail to serve) every state identically. Second, when the total fleet is held fixed and the cheapest assignment of that fleet is feasible in every state, the recourse cost is the same for every allocation in every state, and the only difference between two designs is their fixed cost: $\Delta_{y,z}\equiv F(y)-F(z)$.
\end{enumerate}
\end{proposition}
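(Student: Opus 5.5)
The plan takes the three regimes in turn; each follows from results already established. Part (i) needs no new argument: for $\rho\le\rho^\sharp$, Theorem~\ref{thm:saa-sharp}(i) gives $\Omega(y;\rho)=0$ directly, since $V_{\mathrm{SAA}}$ is the SAA design of the scalar newsvendor problem on the fleet ladder.

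For part (ii), I will use the representation \eqref{eq:omega-phi}, where the adversary's gain against $z$ is $\Phi_{\Delta_z}(\rho)-g_z=\max_{\theta\in\Theta_\rho}\sum_{k,s}\theta_{ks}\Delta_z(s)-\E_{\wh p}\Delta_z+\E_{\wh p}\Delta_z-g_z$. Because $g_z=-\E_{\wh p}\Delta_z$, this equals $\max_{p\in\cU_\rho}\E_p\Delta_z$. I will split any $\theta$ into the mass $1-\sum_{k\ne s(k)}\theta_{ks}$ that stays home and the moved mass, which is at most $m(\rho)$.

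\emph{Right comparator} ($V_z>V_{\mathrm{SAA}}$). By Lemma~\ref{lem:window}, $\Delta_z(s)=-f\Delta V+p\min\{(R_s-V_y)^+,\Delta V\}$. For moved mass I bound $\Delta_z(s)\le(p-f)\Delta V$. For unmoved mass I bound $\Delta_z(s_{(k)})\le -f\Delta V+p(R_{s(k)}-V_y)^+$. Summing gives
\[
\E_p\Delta_z\le -f\Delta V+p\,m(\rho)\Delta V+p\,\E_{\wh p}[(R-V_y)^+],
\]
which is the claimed bound. In the unmoved term I drop the factor (fraction unmoved) $\le1$, which is legitimate because $(R-V_y)^+\ge0$.

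\emph{Left comparator} ($V_z<V_{\mathrm{SAA}}$). Here $\Delta_{y,z}=-\Delta_{z,y}$. Lemma~\ref{lem:window} with roles swapped gives $\Delta_z(s)=f\Delta V-p\min\{(R_s-V_z)^+,\Delta V\}$, which I rewrite as $-(p-f)\Delta V+p\min\{(V_y-R_s)^+,\Delta V\}$. The same split then yields the symmetric bound.

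Affinity with negative slope $p\,m(\rho)-f<0$ (respectively $p\,m(\rho)-(p-f)<0$) is immediate from these bounds. It forces the gain to be negative for $\Delta V$ larger than the constant term divided by the slope's magnitude, so the maximizer lies at finite distance. The qualitative remarks (zero at $\Delta V=0$, positive only while the window straddles reachable mass) follow from Lemma~\ref{lem:window}: moves between states on the same side of the window leave $\E\Delta$ unchanged.

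Part (iii) is the step where care is needed, though it is conceptually easy. If $\Delta_{y,z}(s)\equiv\kappa_{yz}$ for all $y,z\in\cC$, then $\E_p\Delta_{y,z}=\kappa_{yz}$ for every law $p$, because laws sum to one. The SAA design within $\cC$ has $\kappa_{y,z}=\E_{\wh p}\Delta_{y,z}\le0$ for all $z\in\cC$, so $\max_{z\in\cC}\E_p\Delta_{y,z}\le0$, with equality at $z=y$, for every $p$ and every $\rho$. The first mechanism follows from Lemma~\ref{lem:window}: if all $R_s\le\min V$ or all $R_s\ge\max V$ over the ladder, every state falls on one side of every window, so $\Delta$ is the constant $-f\Delta V$ or $(p-f)\Delta V$. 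For the second mechanism, I argue that if the cheapest assignment of a fixed total fleet is feasible for each allocation in every state, then $Q(y;\xi^s)$ equals a common value $Q^\circ(\xi^s)$ independent of the allocation, leaving $\Delta_{y,z}\equiv F(y)-F(z)$. The main obstacle is stating this hypothesis precisely enough that the recourse optimum really is allocation-independent. I would phrase it as ``the unconstrained-capacity optimum of \eqref{eq:recourse} is feasible under every $y\in\cC$,'' so that the capacity rows are slack and $Q$ reduces to the same uncapacitated value for all $y\in\cC$.
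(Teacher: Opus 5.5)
Your proof is correct and follows the paper's argument. Part (i) comes from Theorem~\ref{thm:saa-sharp}. Part (ii) uses the piecewise form of Lemma~\ref{lem:window}, caps the moved mass by $m(\rho)$, and bounds the home-state term through $(R-V_{\mathrm{SAA}})^+$; the paper writes this as $\Phi_{\Delta_z}\le p\,\Delta V\,m(\rho)$ plus a lower bound on $g_z$, which is the same bookkeeping. Part (iii) follows because $\E_p\Delta_{y,z}$ is the same under every law. The one blemish is the intermediate display in (ii), which carries a spurious extra term; the identity you draw from it, $\Phi_{\Delta_z}(\rho)-g_z=\max_{p\in\cU_\rho}\E_p\Delta_z$, is nonetheless correct.
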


Part (iii) nearly covers the fleet positioning instance: the nominal optimum uses the full fleet at every penalty level, and the cheapest assignment of that fleet is feasible in all but a few training states. The small positive ratios of Table~\ref{tab:positioning}, rising with the shortage penalty, come from those few states and from shipping-cost differences across allocations. Outside that hypothesis, equal fleets do not imply equal regret; differing shipping costs alone generate regret, as the two-post example shows.

\begin{proposition}[Where the robust designs sit]
\label{prop:ordering}
Assume $0<f<p$. Let $\mathcal V$ denote the feasible fleet ladder, or a compact interval in the continuous version, and write $\Psi(V)=\max_{p\in\cU_\rho}\E_p c_V$ for the DRO objective. For a law $P$ let
\[
M(P):=\arg\min_{V\in\mathcal V}\E_P c_V,
\qquad
V_-:=\min_{P\in\cU_\rho}\min M(P),
\qquad
V_+:=\max_{P\in\cU_\rho}\max M(P).
\]
\begin{enumerate}[label=(\roman*),leftmargin=1.9em]
\item \textbf{DRO hedges up.} For the unrestricted transport ball, $\Psi$ admits a convex extension and is nonincreasing along feasible levels $V\le V_{\mathrm{SAA}}$. At least one DRO minimizer satisfies $V_{\mathrm{DRO}}\ge V_{\mathrm{SAA}}$. Every DRO minimizer does if the nominal optimizer is unique.
\item \textbf{Both robust designs stay within the minimizer range.} Both robust objectives admit convex extensions, and every DRO and DRRO minimizer lies in $[V_-,V_+]$.
\item \textbf{One-sided ambiguity pins the order.} For one-sided ambiguity sets containing the empirical law, a DRO and a DRRO minimizer can be chosen at or above $V_{\mathrm{SAA}}$ under right shifts, and at or below it under left shifts. With a unique nominal optimizer, these inequalities hold for every robust minimizer.
\end{enumerate}
\end{proposition}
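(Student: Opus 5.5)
The plan rests on one structural fact: for each fixed law $P$ the newsvendor cost $V\mapsto\E_Pc_V=fV+p\,\E_P(R-V)^+$ is convex in $V$. Its right derivative is $f-p\,P(R>V)$, which is nondecreasing in $V$. A supremum of convex functions is convex, so $\Psi(V)=\max_{P\in\cU_\rho}\E_Pc_V$ is convex in the continuous variable. This gives the convex extension claimed in (i) and (ii); on the ladder one simply restricts $\Psi$. For DRRO the objective is $\Omega(V)=\max_P[\E_Pc_V-C^*(P)]$, and $C^*(P)$ does not depend on $V$. Each term is therefore convex in $V$, and so is the supremum. (In the ladder version one extends each $\E_Pc_V$ by its piecewise-linear continuous formula and subtracts the ladder minimum $C^*(P)$.)

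\textbf{Part (ii).} Take $V<V_-$. For every $P$ we have $V<\min M(P)$, so by convexity $\E_Pc_V$ is strictly decreasing up to $\min M(P)\ge V_-$. Hence $\E_Pc_{V}>\E_Pc_{V_-}$ for every $P$ in the ball. Taking the maximum over $P$ gives $\Psi(V)\ge\Psi(V_-)$, and likewise $\Omega(V)\ge\Omega(V_-)$ after subtracting the common term $C^*(P)$. The symmetric argument handles $V>V_+$. Strictness of the pointwise inequality only yields a weak inequality after taking suprema, unless the supremum is attained; it is attained here because $\cU_\rho$ is a compact polytope. Attainment gives strict inequality, so no minimizer lies outside $[V_-,V_+]$. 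This attainment step needs care in the unrestricted-ball version, where one argues via an attaining $P$ for the point $V$ itself.

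\textbf{Part (i).} The key observation is that moving mass upward in $R$ can only increase $\E_P(R-V)^+$. In the unrestricted transport ball the adversary can always push mass to larger $R$ within budget. I would show that for $V\le V'\le V_{\mathrm{SAA}}$ the worst-case law for $V'$ can be evaluated at $V$ without loss, as follows. The map $V\mapsto\E_P(R-V)^+$ is decreasing with slope $-P(R>V)$. Under the worst-case law $P'$ for $V'$, the adversary loads the upper tail, so $P'(R>V)\ge\wh P(R>V)\ge f/p$, the last inequality holding because $V\le V_{\mathrm{SAA}}$ lies at or below the empirical critical fractile. Hence $\E_{P'}c_V\ge\E_{P'}c_{V'}$, and $\Psi(V)\ge\Psi(V')$. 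Monotonicity below $V_{\mathrm{SAA}}$ then gives a minimizer at or above $V_{\mathrm{SAA}}$. If the nominal optimizer is unique, the fractile inequality is strict below $V_{\mathrm{SAA}}$, which gives strict decrease and hence the claim for every DRO minimizer.

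\textbf{Main obstacle.} The hard step is justifying $P'(R>V)\ge\wh P(R>V)$ for the worst-case law. I would first try to show that in the unrestricted ball an optimal adversary only transports mass upward in $R$. Downward moves both cost budget and lower $(R-V')^+$, so a rearrangement argument should replace any optimal plan by a stochastically larger one.

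\textbf{Part (iii).} Under a right-shift set every $P$ stochastically dominates $\wh P$, so $P(R>V)\ge\wh P(R>V)$ for all $V$. The critical fractile then gives $\min M(P)\ge\min M(\wh P)$, hence $V_-\ge$ the smallest SAA minimizer. Part (ii) applied to this set places all robust minimizers at or above $V_-$. With a unique nominal optimizer this bound is $V_{\mathrm{SAA}}$ itself; otherwise one selects the minimizer appropriately. Left shifts are handled symmetrically.
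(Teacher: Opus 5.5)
Parts (i) and (ii) follow the paper's proof: you take a maximizing law for $V'$ that never moves mass downward in $R$, so it first-order dominates $\wh P$, and you use convexity of $\E_Pc_V$ and $R(V;P)$ in $V$ together with attainment. Part (iii), however, has a gap.

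Your route in (iii) differs from the paper's. First-order dominance gives $\min M(P)\ge\min M(\wh P)$ for every $P\in\cU_\rho^{+}$, so the one-sided $V_-$ equals $\min M(\wh P)$, and then (ii) applies. This does prove the unique-optimizer claim, provided the fractile step is read through differences on the ladder, as below.

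The first claim of (iii), though, is about the given SAA design, which may be any nominal minimizer. When $M(\wh P)$ is not a singleton, (ii) only places robust minimizers at or above $\min M(\wh P)$. That can be strictly below $V_{\mathrm{SAA}}$, and ``one selects the minimizer appropriately'' presupposes what is to be shown.

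The missing step is the pointwise dominance the paper uses. For $V<V_{\mathrm{SAA}}$ and every $P\in\cU_\rho^{+}$,
\[
\E_P(c_V-c_{V_{\mathrm{SAA}}})\ge\E_{\wh P}(c_V-c_{V_{\mathrm{SAA}}})\ge0,
\]
because $c_V-c_{V_{\mathrm{SAA}}}$ is nondecreasing in $R$ (Lemma~\ref{lem:window}) and $P$ dominates $\wh P$. Since $C^*(P)$ does not depend on $V$, both the worst-case cost and the worst-case regret over $\cU_\rho^{+}$ are at least as large at $V$ as at $V_{\mathrm{SAA}}$. So any robust minimizer below $V_{\mathrm{SAA}}$ can be replaced by $V_{\mathrm{SAA}}$; left shifts are symmetric. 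Under uniqueness the last inequality is strict, and a law attaining the one-sided objective at $V_{\mathrm{SAA}}$ then excludes every minimizer below it. This single argument gives both halves of (iii) without the detour through (ii).

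Two smaller repairs are needed in (i) and (ii).

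In (i), concluding $\E_{P'}c_V\ge\E_{P'}c_{V'}$ from slopes requires $P'(R>u)\ge f/p$ for every $u\in[V,V')$, not only at $u=V$. On the ladder, the empirical critical fractile can lie strictly between $V$ and $V_{\mathrm{SAA}}$, because the ladder minimizer may sit above it. Then $\wh P(R>u)<f/p$ on part of that interval, and the slope argument breaks, including its strict version under uniqueness. Work with differences as the paper does: $\E_{P'}(c_V-c_{V'})\ge\E_{\wh P}(c_V-c_{V'})\ge0$. The last step follows from convexity of the nominal cost and ladder-optimality of $V_{\mathrm{SAA}}$, and it is strict when $V'=V_{\mathrm{SAA}}$ is the unique nominal optimizer.

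In (ii), the strict separation needs a law $P^*$ attaining the robust objective at the endpoint: $\Psi(V)\ge\E_{P^*}c_V>\E_{P^*}c_{V_-}=\Psi(V_-)$. Your aside about using an attaining law for $V$ itself points the wrong way. It only bounds $\Psi(V)$ below by $\E_{P_V}c_{V_-}$, which is itself at most $\Psi(V_-)$. On the finite support attainment holds at every level, so only the aside needs correcting.
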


The window lemma gives the mechanism behind the middle inequality $V_{\mathrm{DRRO}}\le V_{\mathrm{DRO}}$ observed in every instance of Section~\ref{sec:empirics}: the regret adversary earns only inside the window between two fleet sizes, while the total-cost adversary earns on every reachable state. Proposition~\ref{prop:ordering} does not assert this inequality, and it is not universal: the regret adversary can favor a reachable intermediate state over the distant extreme that drives total cost.

\section{From Tests to a Decision Procedure}
\label{sec:tree}

The tests of Sections~\ref{sec:cert1} and~\ref{sec:cert2} answer, in sequence, the questions an operator should settle before committing to a robust positioning model, or just being confident in the SAA one. Figure~\ref{fig:tree} arranges them as a sequence of steps, each with a known computational cost and a clear set of outcomes that determines the next step. The inputs are the paired operating states (or marginals only), the feasible set $\cY$, a grid of ambiguity radii, and two thresholds on the regret-to-premium ratio, $\tau_{\mathrm{low}}=0.10$ and $\tau_{\mathrm{high}}=0.25$. These thresholds are managerial parameters fixed before any instance is examined. The logic is simple: if worst regret is below a tenth of worst premium we treat exposure as small; above a quarter, as material. The attainable improvement is $\Omega(y_{\mathrm{SAA}})-\min_z\Omega(z)$, which the ratio does not measure. 

\begin{figure}[htbp]
\centering
\resizebox{0.80\textwidth}{!}{%
\begin{tikzpicture}[
  font=\small,
  node/.style={draw=black!65,line width=.6pt,rounded corners=3pt,fill=white,
               align=left,inner xsep=9pt,inner ysep=7pt,text width=7.2cm},
  leaf/.style={rounded corners=3pt,align=left,inner xsep=9pt,inner ysep=6pt,text width=4.5cm},
  go/.style={-{Stealth[length=5pt]},line width=.6pt,black!60},
  tag/.style={font=\scriptsize,text=black!60,fill=white,inner sep=1.5pt},
  node distance=9mm and 28mm]

\node[node] (n1) {\textbf{Test 1. Dependence.} Can the joint demand and travel time distribution change the allocation?\\[2pt]
  {\scriptsize\itshape\textcolor{black!55}{Lane quantile bounds (Theorem~\ref{thm:marginal-threshold}); no optimization model.}}};

\node[node,below=of n1] (n2) {\textbf{Test 2. Regret-to-premium ratio.} How large is worst-case regret relative to worst-case premium?\\[2pt]
  {\scriptsize\itshape\textcolor{black!55}{$S\times|\cY|$ recourse solves and one transport LP (Proposition~\ref{prop:sandwich}).}}};

\node[node,below=of n2] (n3) {\textbf{Test 3. Exact separation.} Resolve the ratio when the cheap bounds leave it open.\\[2pt]
  {\scriptsize\itshape\textcolor{black!55}{One time-limited MILP per radius (Theorem~\ref{thm:separation}); primal and dual bound the ratio.}}};

\node[node,below=of n3] (n4) {\textbf{Test 4. Direction.} Is the direction of the shift predictable from context?\\[2pt]
  {\scriptsize\itshape\textcolor{black!55}{A registered forecast with out-of-sample skill; the same programs on a one-sided set (Section~\ref{sec:direction}).}}};

\node[leaf,fill=green!10,right=of n1] (l1)
  {\textbf{Dependence cannot change the decision.} Paired data price the budget, not the allocation.};
\node[leaf,fill=green!14,right=of n2] (l2)
  {\textbf{Confirmed low.} Worst-case regret is at most $\tau_{\mathrm{low}}$ of worst-case premium. Invest in capacity, dispatch, dynamic posting.};
\node[leaf,fill=orange!14,right=of n3] (l3)
  {\textbf{Indeterminate.} Report the interval as the answer.};
\node[leaf,fill=blue!10,right=of n4,yshift=8mm] (l4a)
  {\textbf{Directed hedge.} Solve DRRO on the one-sided set; regret sizes the move.};
\node[leaf,fill=red!8,below=4mm of l4a] (l4b)
  {\textbf{Symmetric hedge.} Solve DRRO on the full ball. Exposure is confirmed; direction is not.};

\draw[go] (n1) -- node[tag,right]{fail or inconclusive} (n2);
\draw[go] (n2) -- node[tag,right,align=center]{gray zone\\$(\tau_{\mathrm{low}}<\omega<\tau_{\mathrm{high}})$} (n3);
\draw[go] (n3) -- node[tag,right]{confirmed high} (n4);
\draw[go] (n1) -- node[tag,above]{pass} (l1);
\draw[go] (n2) -- node[tag,above,align=center]{$\omega\le\tau_{\mathrm{low}}$\\at every radius} (l2);
\draw[go] (n3) -- node[tag,above]{still gray} (l3);
\draw[go,dashed] ([xshift=5mm]n3.east) |- node[tag,left,pos=.25]{low} ([yshift=-6mm]l2.west);
\draw[go] (n4.east) -- ++(5mm,0) |- node[tag,above,pos=.85]{skill} (l4a.west);
\draw[go] (n4.east) -- ++(5mm,0) |- node[tag,below,pos=.85]{no skill} (l4b.west);
\draw[go] (n2.west) -- ++(-6mm,0) |- node[tag,left,align=right,pos=.5]{$\omega\ge\tau_{\mathrm{high}}$\\at some radius} (n4.west);
\end{tikzpicture}}
\caption{The test sequence. Each step is a computation of known cost; each outcome is a decision. Most instances are resolved at Test~2 from quantities a planner already has. Test~2 requires $S\times|\cY|$ recourse solves; Test~3 is one MILP and is bounded on both sides. Test~4 is reached only once exposure has been confirmed, because a test of exposure does not say which way the shift will come.}
\label{fig:tree}
\end{figure}

\paragraph{Test 1: dependence.}
An operator usually knows how often its district is busy and how often its roads are slow. What it often does not know is whether the busy periods and the slow periods are the same periods, and Example~\ref{ex:marginal-worlds} showed that this alone can reverse the best allocation. Test~1 asks whether that reversal is possible here. Theorem~\ref{thm:marginal-threshold} answers from the marginals, by comparing each alternative's nominal disadvantage with the largest change in relative cost that any pairing of demand and travel could produce. A pass is a guarantee: no pairing consistent with the data makes another allocation better, and there is no point in further modeling the dependence. Paired observations remain valuable, but to price the operating budget through the covariance charge of Proposition~\ref{prop:covariance-charge}, not to choose the allocation. A fail reports the decision price, the expected loss that the omitted dependence could cause.

\paragraph{Test 2: the regret-to-premium ratio.}
During a disruption, most of the additional cost falls on every feasible design alike. Test~2 isolates the part that does not: the ratio of worst regret to worst premium. Proposition~\ref{prop:sandwich} brackets this ratio from quantities that are cheap to compute, the best design for each recorded operating period and one transport linear program, and the pre-flight statistic $D$, obtained from the same computation, says whether positioning can matter under the model at all. If the upper bound lies below $\tau_{\mathrm{low}}$ at every radius of the grid, worst-case regret is at most $\tau_{\mathrm{low}}$ of worst-case premium, and the recommendation is to direct optimization effort to capacity, dispatch, or dynamic posting rather than further modeling positioning. If the lower bound exceeds $\tau_{\mathrm{high}}$ at some radius, some plausible operating environment makes another design substantially better, and the tree continues to Test~4.

\paragraph{Test 3: exact separation.}
Test 3 is reached only when the cheap bounds of Test 2 leave the ratio in the gray zone, below $\tau_{\mathrm{high}}$ but above $\tau_{\mathrm{low}}$. At this point the exact worst-case regret is needed, and Theorem~\ref{thm:separation} shows that it is the optimal value of a single mixed-integer program. In practice the program can be run with a time limit. If it finishes, the value is exact. If it is stopped early, the best solution found gives a valid lower bound and the dual value gives a valid upper bound, so the ratio is known to lie inside a specific interval. If that interval falls entirely below $\tau_{\mathrm{low}}$ or entirely above $\tau_{\mathrm{high}}$, the instance is routed the same way as at Test 2. If the interval still straddles a threshold, the correct output is the interval itself, with the explicit statement that the routing decision depends on where within the interval the true ratio lies. A manager can then weigh that uncertainty against the cost of the robust solve it would license. No robust model produces this statement on its own; it comes from the separation computation.

\paragraph{Test 4: direction.}
A test of exposure says that a different design could matter. It does not say which way the operating environment will move, and the hedge pays only if the realized shift falls on the side toward which the design was moved (Theorem~\ref{thm:transfer}(f)). Test~4 asks whether that side is predictable. If the operator has a forecasting rule that was fixed in advance and that has shown accuracy on periods it was not trained on, then Section~\ref{sec:direction} restricts the ambiguity set to shifts on the predicted side and lets regret determine the size of the move. Under a forecast of falling demand, for example, the robust move is a smaller fleet rather than a larger one. If no such forecast exists, the robust model may still be solved, but its report must state that the direction of the hedge is not supported by the test. The routing guarantees and their limits are stated next.

\begin{proposition}[Decision-tree routing claims]
\label{prop:tree-correctness}
Let $\tau_{\mathrm{low}}\le\tau_{\mathrm{high}}$.
(a) If $\Pi(y;\rho)>0$ and $\Omega_{\mathrm{UB}}(y;\rho)/\Pi(y;\rho)\le\tau_{\mathrm{low}}$ for a valid upper bound, then $\omega(y)\le\tau_{\mathrm{low}}$ (confirmed low).
(b) If $\Pi(y;\rho)>0$ and $\Omega_{\mathrm{LB}}(y;\rho)/\Pi(y;\rho)\ge\tau_{\mathrm{high}}$, then $\omega(y)\ge\tau_{\mathrm{high}}$ (confirmed high).
(c) $\Omega(y;\rho)$ and $\Pi(y;\rho)$ are nondecreasing in $\rho$, but $\omega$ need not be monotone, so a confirmed-low verdict requires the bound at \emph{every} radius of the grid, while a confirmed-high verdict requires it at \emph{some} radius.
(d) A pass of Theorem~\ref{thm:marginal-threshold} is sufficient, not necessary, for dependence-irrelevance.
(e) The directional route is valid on the event that $p_0$ lies in the symmetric ball \emph{and} the forecast side is correct; when the forecast side is wrong, the symmetric bounds of Theorem~\ref{thm:transfer} remain available only if symmetric-ball membership holds.
\end{proposition}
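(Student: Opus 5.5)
The five claims are largely bookkeeping consequences of earlier results, so the plan is to take them in order, isolating the one substantive step in (c).

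For (a) and (b), use the sandwich $\Omega_{\mathrm{LB}}(y;\rho)\le\Omega(y;\rho)\le\Omega_{\mathrm{UB}}(y;\rho)$ from Proposition~\ref{prop:sandwich} or from the primal/dual bounds of Remark~\ref{rem:separation-practice}. Since $\Pi(y;\rho)>0$, dividing by $\Pi$ preserves both inequalities. The stated hypotheses then give $\omega\le\tau_{\mathrm{low}}$ and $\omega\ge\tau_{\mathrm{high}}$ at that radius. One point needs care: $\Pi$ must be computed exactly. It is a single transport LP, so this is available, and I will note that any outer approximation of $\Pi$ would have to enter on the correct side.

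For (c), monotonicity follows from nesting. If $\rho\le\rho'$, then $\Theta_\rho\subseteq\Theta_{\rho'}$, hence $\cU_\rho\subseteq\cU_{\rho'}$. A supremum of a fixed function over a larger set cannot decrease. This applies to both $R(y;\cdot)$ and $c_y^\top p$; the reference term $c_y^\top\wh p$ in $\Pi$ does not depend on $\rho$.

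Non-monotonicity of $\omega$ requires a counterexample, and this is the main obstacle. The target is an instance where $\Omega$ grows early and saturates while $\Pi$ keeps growing, so $\omega$ first rises and then falls. I would use a single observation at state $1$ and three states at distances $d_{12}=1$ and $d_{13}=2$, with designs $y$ and $z$. Set $c_y=(0,1,M)$ and $c_z=(0,0,M)$. Then $\Delta_{y,z}=(0,1,0)$, so $\Omega(\rho)=\min(\rho,1)$ on small radii. Meanwhile $\Pi$ eventually follows the steep move to state $3$, giving $\Pi\approx (M/2)\rho$ for large $M$, and $\omega$ decays once $\Omega$ saturates. I would check the early regime with Theorem~\ref{thm:law-radius} and pick $M$ so that the ratio is visibly nonmonotone. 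An alternative is to let $\Omega$ be zero below $\rho^\sharp$ and then jump relative to $\Pi$. Either example shows that a bound at one radius says nothing at another. The asymmetric quantifiers then follow. A confirmed-low verdict over the grid means $\omega\le\tau_{\mathrm{low}}$ at each radius, which needs (a) at every radius. A confirmed-high verdict needs only one witness radius, by (b).

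For (d), sufficiency is the decision criterion of Theorem~\ref{thm:marginal-threshold}. Non-necessity needs an instance with $K(y,z)>0$ in which the attaining coupling lies outside the admissible class, as in Remark~\ref{rem:onesided}. A concrete choice is to restrict to laws consistent with observed cross-lane structure.

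For (e), invoke Theorem~\ref{thm:transfer}. Its guarantees are conditional on $p_0\in\cU_\rho$. The one-sided set is a subset of the symmetric ball, so its bounds transfer only when $p_0$ also lies in the one-sided set, that is, when the forecast side is correct. When the side is wrong, the only remaining certified statements are the symmetric-ball bounds, and those still require ball membership.
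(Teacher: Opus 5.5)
Your arguments for (a), (b), (d), (e), and the nesting argument for monotonicity of $\Omega$ and $\Pi$ in (c) match the paper's proof. The gap is the counterexample for non-monotonicity of $\omega$: your instance does not give a non-monotone ratio for any $M$.

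With a single observation at state~1, $\Delta_{y,z}=(0,1,0)$ gives $\Omega(\rho)=\min(\rho,1)$. For $M\ge 2$ the steepest move for $c_y$ is to state~3, so $\Pi(\rho)=\min(M\rho/2,\,M)$. Hence
\[
\omega=\begin{cases}
2/M & \text{on } (0,1],\\
2/(M\rho) & \text{on } [1,2],\\
1/M & \text{for } \rho\ge 2,
\end{cases}
\]
which is constant and then decreasing, i.e.\ monotone nonincreasing. For $1<M<2$ you get $\omega=1$ on $(0,1]$ followed by a decrease, and for $M\le 1$, $\omega\equiv 1$.

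There is no rising phase because $c_y$ and $c_z$ tie at the observed state. Then $g_z=0$ and $\rho^\sharp=0$, so $\Omega$ and $\Pi$ both leave the origin linearly.

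This matters for the routing conclusion, not only the wording. If $\omega$ were always nonincreasing, a bound at the smallest grid radius would certify confirmed low for the whole grid. If it were always nondecreasing, the largest radius would suffice. So your remark that ``either example shows that a bound at one radius says nothing at another'' is not right. Neither one-directional mechanism alone justifies checking \emph{every} radius: not your saturation example, and not the unworked ``zero below $\rho^\sharp$'' alternative. You need a single instance whose ratio peaks in the interior.

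The fix is to combine both mechanisms in one instance by giving the comparator a strict nominal disadvantage. Keep $c_y=(0,1,M)$ and set $c_z=(g,0,M)$ with $g>0$. Then $\Delta_{y,z}=(-g,1,0)$ and $\Omega(\rho)=\max\{0,\min\{(1+g)\rho-g,\,1\}\}$. This vanishes up to $\rho^\sharp=g/(1+g)$ and saturates at $1$ from $\rho=1$, while $\Pi$ is unchanged. So $\omega=0$ on $(0,\rho^\sharp]$, $\omega(1)=2/M$, and $\omega(2)=1/M$.

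With $g=1$ and $M=10$ this is, up to a common additive shift, exactly the paper's instance: $c_y=(1,2,11)$, $c_z=(2,1,11)$ on states $0,1,2$, with $d(s,t)=|s-t|$ and nominal law $\delta_0$. There $\Pi=\min\{5\rho,10\}$, $\Omega=\max\{0,\min\{2\rho-1,1\}\}$, and $\omega(1/2)=0$, $\omega(1)=1/5$, $\omega(2)=1/10$. A threshold $\tau_{\mathrm{low}}\in[1/10,1/5)$ therefore passes at both endpoints of the grid $\{1/2,1,2\}$ and fails in the middle. That is precisely why confirmed low needs every radius.
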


\section{The Two-Post Example, Revisited}
\label{sec:two-post}

An agency staffs two posts, one on each bank of a river, and must split six units between them before the day's demand and travel conditions are known. The usual forecast favors the west bank, where most clear-day demand occurs and where storms are predicted more often. An eastern storm creates a different problem: demand shifts to the east bank at the same time that crossing the river becomes slow. A unit stationed east can then avoid expensive crossings, but moving it there leaves less capacity for a western storm. Table~\ref{tab:post-inputs} gives three operating scenarios. Travel times include the return trip, each unit supplies 240 minutes of service, and allocation $x$ means $x$ units east and $6-x$ west. The response LP is
\begin{equation}
C(x,\omega)=\min_{q\geq0}\Bigl\{\sum_{i,j\in\{E,W\}}\tau_{ij}^{\omega}q_{ij}:\ q_{Ej}+q_{Wj}=d_j^{\omega},\ \sum_j\tau_{Ej}^{\omega}q_{Ej}\leq240x,\ \sum_j\tau_{Wj}^{\omega}q_{Wj}\leq240(6-x)\Bigr\}.
\label{eq:post-recourse}
\end{equation}
Only allocations for which \eqref{eq:post-recourse} is feasible in all three scenarios are admissible, which leaves two choices: $\mathcal X=\{2,3\}$, meaning keep four units west or move one east. The nominal forecast is $\wh p=(0.80,0.05,0.15)$ (clear, eastern storm, western storm). The ambiguity set is the finite-support Wasserstein ball \eqref{eq:transport-polytope} with unit ground metric and radius $\rho/2=0.20$, i.e.\ the $\ell_1$ ball of radius $\rho=0.40$.

\begin{table}[htbp]
\centering
\caption{Demand and round-trip response times in the two-post experiment. The first index of $\tau_{ij}$ is the post, the second the demand district.}
\label{tab:post-inputs}
\begin{tabular}{lrrrrrr}
\toprule
Scenario & $d_E$ & $d_W$ & $\tau_{EE}$ & $\tau_{EW}$ & $\tau_{WE}$ & $\tau_{WW}$ \\
\midrule
Clear weather & 8 & 40 & 6 & 18 & 18 & 6 \\
Eastern storm & 70 & 12 & 8 & 45 & 45 & 8 \\
Western storm & 12 & 80 & 10 & 40 & 40 & 10 \\
\bottomrule
\end{tabular}
\end{table}

\begin{table}[htbp]
\centering
\caption{Allocation outcomes at $\rho=0.40$. Average-cost planning and DRO choose 2E/4W; DRRO chooses 3E/3W. Both allocations serve every request in every scenario. Costs in unit-minutes.}
\label{tab:post-results}
\small
\begin{tabular}{lrrrrrr}
\toprule
Allocation & Clear & Eastern & Western & Nominal & Worst-case & Worst-case \\
(E/W) & & storm & storm & expected cost & expected cost & regret \\
\midrule
2/4 & 288 & 1026 & 920 & 419.70 & 567.30 & 92.50 \\
3/3 & 288 & 656 & 1160 & 437.20 & 611.60 & 84.00 \\
\bottomrule
\end{tabular}
\end{table}

The entire mechanism is a count of river crossings. In the eastern storm, two eastern units serve $60$ of the district's $70$ requests; the remaining ten cross the river at $45$ minutes instead of being served locally in $8$, adding $370$ unit-minutes. A third eastern unit eliminates all ten crossings. In the western storm, that same relocation creates eight crossings costing $240$ unit-minutes. Clear-day cost is the same under either allocation. The three methods then differ only in what they optimize. Average-cost planning keeps four units west because the nominal forecast puts three times more weight on the western storm than the eastern. Total-cost DRO also keeps four west: under its worst-case law the bill is $567.30$ for 2E/4W against $611.60$ for 3E/3W. DRRO moves one unit east because its largest expected loss from the allocation decision is $84.00$ against $92.50$. $\Omega$(2E/4W) becomes positive at $\rho^\sharp=0.057$; DRRO switches from 2E/4W to 3E/3W at $\rho=0.289$ and retains it at every radius through $0.80$, while the other two methods keep 2E/4W throughout.

\paragraph{Routing the example through the tree.}
We now run the three scenarios through the test sequence. \emph{Test 2} settles the question. The comparator pool contains the best design for each scenario (2E/4W in the western storm, 3E/3W in the eastern) together with the SAA design. Against the comparator 3E/3W and the law that puts $75\%$ of mass on clear weather and $25\%$ on the eastern storm, the lower bound on worst-case regret is $\Omega_{\mathrm{LB}}=92.50$. The premium for the SAA design is $567.30-419.70=147.60$. The regret-to-premium ratio is therefore at least $92.50/147.60=0.63$, far above the high threshold $\tau_{\mathrm{high}}=0.25$, so the verdict is \textbf{\CertHigh} at Test~2. The exact transport LP confirms the bound is tight. \emph{Test 4} identifies the direction. The exposure lives on the eastern-storm side. A forecast that predicts demand will shift east would route to the directed hedge, and the regret design 3E/3W is the correct move on that side. If the storm instead comes from the west, Theorem~\ref{thm:transfer}(f) says the test was right about exposure but the hedge is on the wrong side, and it costs $240$ unit-minutes. In this instance the tree reaches the same answer as the full model, and it did so from the comparator pool alone, without solving any robust design problem.

\section{Numerical Experiments}
\label{sec:empirics}

We use two open data sets, a yellow-taxi fleet and an emergency medical service, six training-to-test pairs in all: four on the fleet, two on the emergency service. For each pair we run the tests, select the three designs, and then score them on the test year. Exactness is relative to the enumerated state and design sets; the DRRO and DRO designs of Section~\ref{sec:empirics} are obtained by enumeration over the fleet ladder and the 5,000 / 982 / 968 candidate allocations. The dependence test is illustrated in Example~\ref{ex:marginal-worlds}; it is not applied to the data, which supply paired states, and the two-post example enters the tree at Test~2 for the same reason.

\subsection{The regret-to-premium-ratio test on fleet size}
\label{sec:emp-certificates}

Table~\ref{tab:certificates} reports the regret-to-premium-ratio test of Section~\ref{sec:cert2} on the capacity decision for the fleet instance, over four training-to-test pairs: three ordinary winters and the twelve months before Hurricane Sandy, with the test year covering the storm and its aftermath. $q$ is a stress normalization of the radius, not a confidence level.

\begin{table}[htbp]
\centering
\caption{Regret-to-premium-ratio test on the fleet-size decision. $\rho_q$ is the radius at which the sample-average fleet's worst-case premium equals the fraction $q$ of its nominal cost. ``sat'' marks radii at which the finite support cannot reach that target, so the row reflects the largest reachable shift rather than $q$. Fleet sizes are in vehicle-blocks and $\Omega$ in the instance's cost units; $\omega=\Omega/\Pi$ is the regret-to-premium ratio. The test result uses the thresholds $\tau_{\mathrm{low}}=\tauLow$ and $\tau_{\mathrm{high}}=\tauHigh$. $\rho_N(0.05)$ from Proposition~\ref{prop:weissman} at $N=150$: $0.561$ / $0.543$ / $0.566$ / $0.539$ for the four folds.}
\label{tab:certificates}
\footnotesize
\setlength{\tabcolsep}{3pt}
\begin{tabular}{llrrcrrrrrl}
\toprule
Fold (train$\to$test) & $q$ & $\rho_q$ & $\Pi/\E$ & sat & $\Omega$ & $\omega$ & $V_{\mathrm{SAA}}$ & $V_{\mathrm{DRRO}}$ & $V_{\mathrm{DRO}}$ & test result \\
\midrule
2016--17$\to$2017--18 & \qA & \FoldAQaRho & \FoldAQaPiE & \FoldAQaSat & \FoldAQaOmega & \FoldAQaShare & \FoldAVSaa & \FoldAQaVDrro & \FoldAQaVDro & \FoldAQaRoute \\
 & \qB & \FoldAQbRho & \FoldAQbPiE & \FoldAQbSat & \FoldAQbOmega & \FoldAQbShare & \FoldAVSaa & \FoldAQbVDrro & \FoldAQbVDro & \FoldAQbRoute \\
 & \qC & \FoldAQcRho & \FoldAQcPiE & \FoldAQcSat & \FoldAQcOmega & \FoldAQcShare & \FoldAVSaa & \FoldAQcVDrro & \FoldAQcVDro & \FoldAQcRoute \\
\addlinespace
2017--18$\to$2018--19 & \qA & \FoldBQaRho & \FoldBQaPiE & \FoldBQaSat & \FoldBQaOmega & \FoldBQaShare & \FoldBVSaa & \FoldBQaVDrro & \FoldBQaVDro & \FoldBQaRoute \\
 & \qB & \FoldBQbRho & \FoldBQbPiE & \FoldBQbSat & \FoldBQbOmega & \FoldBQbShare & \FoldBVSaa & \FoldBQbVDrro & \FoldBQbVDro & \FoldBQbRoute \\
 & \qC & \FoldBQcRho & \FoldBQcPiE & \FoldBQcSat & \FoldBQcOmega & \FoldBQcShare & \FoldBVSaa & \FoldBQcVDrro & \FoldBQcVDro & \FoldBQcRoute \\
\addlinespace
2022--23$\to$2023--24 & \qA & \FoldDQaRho & \FoldDQaPiE & \FoldDQaSat & \FoldDQaOmega & \FoldDQaShare & \FoldDVSaa & \FoldDQaVDrro & \FoldDQaVDro & \FoldDQaRoute \\
 & \qB & \FoldDQbRho & \FoldDQbPiE & \FoldDQbSat & \FoldDQbOmega & \FoldDQbShare & \FoldDVSaa & \FoldDQbVDrro & \FoldDQbVDro & \FoldDQbRoute \\
 & \qC & \FoldDQcRho & \FoldDQcPiE & \FoldDQcSat & \FoldDQcOmega & \FoldDQcShare & \FoldDVSaa & \FoldDQcVDrro & \FoldDQcVDro & \FoldDQcRoute \\
\addlinespace
pre-Sandy$\to$Sandy & \qA & \SandyQaRho & \SandyQaPiE & \SandyQaSat & \SandyQaOmega & \SandyQaShare & \SandyVSaa & \SandyQaVDrro & \SandyQaVDro & \SandyQaRoute \\
 & \qB & \SandyQbRho & \SandyQbPiE & \SandyQbSat & \SandyQbOmega & \SandyQbShare & \SandyVSaa & \SandyQbVDrro & \SandyQbVDro & \SandyQbRoute \\
 & \qC & \SandyQcRho & \SandyQcPiE & \SandyQcSat & \SandyQcOmega & \SandyQcShare & \SandyVSaa & \SandyQcVDrro & \SandyQcVDro & \SandyQcRoute \\
\bottomrule
\end{tabular}
\end{table}

Three results stand out. First, the test separates the years. At the middle radius every ordinary winter lands in the gray zone, with ratios between \FoldDQbShare\ and \FoldBQbShare, while the year before Sandy is \CertHigh\ at \SandyQbShare. The test ranked the pre-disruption training set above every ordinary winter, at a radius that was neither saturated nor chosen after the fact. Second, the ordering $V_{\mathrm{SAA}}\le V_{\mathrm{DRRO}}\le V_{\mathrm{DRO}}$ is observed in every row (Proposition~\ref{prop:ordering}(i) gives $V_{\mathrm{SAA}}\le V_{\mathrm{DRO}}$ for at least one DRO minimizer; the placement of $V_{\mathrm{DRRO}}$ between them is empirical), with the regret-robust fleet strictly between the other two in every row where the test result is not trivially low. Third, the largest radius is saturated on every fold. The finite support cannot produce a premium of a fifth of nominal cost, so those rows reflect the largest reachable shift rather than $q=\qC$. We include them because the \CertLow\ verdict requires the bound to hold at every radius, and a high result there only confirms exposure at the largest reachable shift, not at the calibrated radius.

Proposition~\ref{prop:vertex} says the worst-case law should be readable, and it is. At $q=\qB$ on the Sandy fold the adversary moves \SandyQbMass\ of the training observations wholesale onto the states with the highest required capacity. In each state, the extra cost of the sample-average fleet relative to the larger comparator is exactly what the window lemma predicts: paying for idle fleet where capacity is slack, and saving on shortages where it binds.

\subsection{The three designs out of sample}
\label{sec:emp-threeway}

For each fold we take the three designs selected at the middle radius and score them on the test year. The prediction registered for the Sandy fold (Appendix~\ref{app:data}) was that realized cost would rise with the amount of hedging bought, $\mathrm{SAA}<\mathrm{DRRO}<\mathrm{DRO}$, with both gaps clear of zero. It held on the Sandy fold, and the same ordering holds on two of the three ordinary winters, for which no prediction was registered. On those three folds, realized cost rose with the amount of hedging: the regret-robust design cost more than the sample-average design, and the total-cost design cost more than the regret-robust design, with both gaps outside their bootstrap intervals. On the three folds where both robust designs lost to SAA, the regret-robust design lost less than the total-cost design; on 2022--23$\to$2023--24 neither difference is resolved from zero and the total-cost point estimate is the lowest. On that fold the ex-post best fixed fleet on the test draw was the total-cost fleet, 1,100. On the Sandy fold, the one the test had flagged as exposed, the regret-robust fleet avoided a fraction \SandyTwAvoided\ of the total-cost fleet's realized loss, interval [\SandyTwAvoidedLo, \SandyTwAvoidedHi]. ``Avoided'' is the fraction of DRO's realized excess over SAA that DRRO did not incur; it is not a saving relative to SAA.

\begin{table}[htbp]
\centering
\caption{Out-of-sample cost differences per test state, with designs selected at $q=\qB$ and day-block bootstrap intervals. ``Avoided'' is the fraction of the total-cost DRO's realized loss over SAA that the regret-robust design did not incur: $[(\mathrm{DRO}-\mathrm{SAA})-(\mathrm{DRRO}-\mathrm{SAA})]/(\mathrm{DRO}-\mathrm{SAA})$. The Sandy row is the pre-registered high-branch instance; its result is unchanged when the data filter of Appendix~\ref{app:data} is applied.}
\label{tab:threeway}
\footnotesize
\setlength{\tabcolsep}{4pt}
\resizebox{\textwidth}{!}{%
\begin{tabular}{lrrrrrrl}
\toprule
Fold & $\mathrm{DRRO}-\mathrm{SAA}$ & 95\% CI & $\mathrm{DRO}-\mathrm{SAA}$ & 95\% CI & $\mathrm{DRO}-\mathrm{DRRO}$ & 95\% CI & Avoided \\
\midrule
2016--17$\to$2017--18 & \FoldADrroSaa & [\FoldADrroSaaLo, \FoldADrroSaaHi] & \FoldADroSaa & [\FoldADroSaaLo, \FoldADroSaaHi] & \FoldADroDrro & [\FoldADroDrroLo, \FoldADroDrroHi] & --- \\
2017--18$\to$2018--19 & \FoldBDrroSaa & [\FoldBDrroSaaLo, \FoldBDrroSaaHi] & \FoldBDroSaa & [\FoldBDroSaaLo, \FoldBDroSaaHi] & \FoldBDroDrro & [\FoldBDroDrroLo, \FoldBDroDrroHi] & --- \\
2022--23$\to$2023--24 & \FoldDDrroSaa & [\FoldDDrroSaaLo, \FoldDDrroSaaHi] & \FoldDDroSaa & [\FoldDDroSaaLo, \FoldDDroSaaHi] & \FoldDDroDrro & [\FoldDDroDrroLo, \FoldDDroDrroHi] & --- \\
pre-Sandy$\to$Sandy & \SandyTwDrroSaa & [\SandyTwDrroSaaLo, \SandyTwDrroSaaHi] & \SandyTwDroSaa & [\SandyTwDroSaaLo, \SandyTwDroSaaHi] & \SandyTwDroDrro & [\SandyTwDroDrroLo, \SandyTwDroDrroHi] & \SandyTwAvoided\ [\SandyTwAvoidedLo, \SandyTwAvoidedHi] \\
\bottomrule
\end{tabular}}
\end{table}

Per test state, DRO's excess over SAA is a fleet-cost increase of 20,605.5 minus a shortage reduction of 11,995.8, net 8,609.7; the shortage reduction fell on 16 test days.

\subsection{Out of sample: the bound and the direction}
\label{sec:emp-transfer}

Theorem~\ref{thm:transfer} says the confirmed bound on regret holds whenever the true law lies inside the ball. For each fold, Table~\ref{tab:transfer} reports the distance from the training law to the realized test law, whether that distance falls within $\rho_{\qB}$, the realized regret of the sample-average fleet against the best fixed fleet on the test year, and whether the confirmed $\Omega$ covers it. A second column applies the window lemma: it records whether the test year's critical quantile fell inside $[V_{\mathrm{DRRO}},V_{\mathrm{DRO}}]$, the interval in which the regret-robust fleet can beat both alternatives. At $N=150$ the Route A radius $\rho_N$ exceeds every calibrated radius, so Theorem~\ref{thm:transfer} is not invoked as a guarantee for these folds; Table~\ref{tab:transfer} reports realized distances only. The adversary's destination support is the 150 training states.

\begin{table}[htbp]
\centering
\caption{Transfer and direction at $q=\qB$. $W_1$ is the distance from the training to the realized test law in the test metric; $R_{\mathrm{test}}$ is the sample-average fleet's regret on the test year against the best fixed fleet $V^*$; ``covered'' is $R_{\mathrm{test}}\le\Omega$; ``band'' is whether the test year's critical quantile lies in $[V_{\mathrm{DRRO}},V_{\mathrm{DRO}}]$. $V^*$ is the minimizer over the capacity ladder on the test draw of Table~\ref{tab:threeway}; all columns are computed on the same 150-state test draw (Sandy: full clean window).}
\label{tab:transfer}
\footnotesize
\setlength{\tabcolsep}{3pt}
\begin{tabular}{lrrcrrrccr}
\toprule
Fold & $W_1$ & $\rho_{\qB}$ & $W_1\le\rho_{\qB}$ & $V^*$ & $R_{\mathrm{test}}$ & $\Omega$ & covered & band & $\mathrm{DRRO}-\mathrm{SAA}$ \\
\midrule
2016--17$\to$2017--18 & \FoldAWone & \FoldAQbRho & \FoldAInBall & \FoldAVStar & \FoldARtestStar & \FoldAQbOmega & \FoldACovered & \FoldABand & \FoldADrroSaa \\
2017--18$\to$2018--19 & \FoldBWone & \FoldBQbRho & \FoldBInBall & \FoldBVStar & \FoldBRtestStar & \FoldBQbOmega & \FoldBCovered & \FoldBBand & \FoldBDrroSaa \\
2022--23$\to$2023--24 & \FoldDWone & \FoldDQbRho & \FoldDInBall & \FoldDVStar & \FoldDRtestStar & \FoldDQbOmega & \FoldDCovered & \FoldDBand & \FoldDDrroSaa \\
pre-Sandy$\to$Sandy & \SandyWone & \SandyQbRho & \SandyInBall & \SandyVStar & \SandyRtestStar & \SandyQbOmega & \SandyCovered & \SandyBand & \SandyTwDrroSaa \\
\bottomrule
\end{tabular}
\end{table}

On the two folds whose training-to-test $W_1$ distance is below $\rho_{\qB}$ (the Sandy year and the last winter) the bound covers the realized regret. On the two folds above the radius, by factors of 1.7 and 1.4, the theorem makes no claim; the bound holds on one and is exceeded on the other by a factor of 1.6. The Sandy row is the one that matters: the Sandy test window, measured in the metric the test uses, had training-to-test $W_1$ distance below $\rho_{\qB}$.

The band test explains why the hedge nevertheless lost. On three folds the test year's critical quantile fell \emph{below} the sample-average fleet: demand moved left, a right-tail hedge cannot pay on a left shift, and the regret-robust fleet lost to the sample-average fleet with intervals clear of zero. On the one fold on which the band test passes---the last winter, the only fold whose critical quantile rose above the sample-average fleet---the regret-robust fleet cost less than the sample-average fleet in point estimate, with an interval that includes zero: the difference DRRO$-$SAA is \FoldDDrroSaa\ per state with interval [\FoldDDrroSaaLo, \FoldDDrroSaaHi]. Four folds, four correct signs: the direction of the realized shift, not the size of the confirmed exposure, decided whether the hedge paid. This pattern illustrates the distinction between exposure and direction.

\subsection{The regret-to-premium-ratio test on positioning}
\label{sec:emp-positioning}

What about placement: given a fixed fleet, which sites should carry it? Table~\ref{tab:positioning} reports the test on that coordinate in every real instance where it was computed. The fleet rows use the 2018--19 training winter; its test window is excluded from Tables~\ref{tab:certificates}--\ref{tab:transfer} because it runs into March 2020, but the positioning certificate is computed on training states alone and does not use it. The EMS recourse has two priority classes and is outside the class of Theorem~\ref{thm:separation}; on each EMS instance, $\Omega$ is computed by enumerating the
comparators in that instance's stated candidate design set.
Exactness is relative to that enumerated set. On the fleet instance the positioning coordinate is \CertLow\ in every row. The regret-to-premium ratio from reassigning the fleet across the eight depots is a few hundredths at most, and even at the highest shortage penalty (twenty driven minutes per lost trip) it reaches only \KapTwentyQbShare, a tenth of the high threshold. The reason is structural. At every penalty level the best design uses the full available fleet, so the test effectively compares two allocations of the same total number of units. In this instance the cheapest assignment of the full fleet is feasible in all but a few training states, so Proposition~\ref{prop:regimes}(iii) nearly applies: the pairwise differential is constant outside those states, and the ratio rises with the shortage penalty only through them. Equal fleets alone do not imply this; where shipping costs differ across allocations, regret is positive even with full service (Section~\ref{sec:two-post}).

\begin{table}[htbp]
\centering
\caption{Regret-to-premium-ratio test on the positioning coordinate at $q=\qB$. Fleet instance: the fleet is split across eight depots; $\kappa$ is the shortage penalty in driven minutes per lost trip. At every $\kappa$ the optimal design uses the full available fleet, so each row compares different splits of the same total. EMS instance: five boroughs; $\omega_{\mathrm{alloc}}$ is the regret-to-premium ratio from reallocating a fixed fleet across boroughs, $\omega_{\mathrm{full}}$ is the regret-to-premium ratio over all designs, including different fleet sizes.}
\label{tab:positioning}
\small
\begin{tabular}{llrll}
\toprule
Instance & design set & $\omega$ & test result & lever \\
\midrule
Fleet, 2018--19$\to$2019--20 & \TaxiPosNY\ designs, fleet free & \TaxiPosQbShare & \TaxiPosQbRoute & positioning \\
Fleet, $\kappa=1$ & same & \KapOneQbShare & \KapOneQbRoute & positioning \\
Fleet, $\kappa=5$ & same & \KapFiveQbShare & \KapFiveQbRoute & positioning \\
Fleet, $\kappa=10$ & same & \KapTenQbShare & \KapTenQbRoute & positioning \\
Fleet, $\kappa=20$ & same & \KapTwentyQbShare & \KapTwentyQbRoute & positioning \\
\addlinespace
EMS, pre-Jonas$\to$Jonas & equal fleet & \JonasQbShareAlloc & \JonasQbRouteAlloc & positioning \\
 & full set & \JonasQbShareFull & \JonasQbRouteFull & capacity \\
EMS, pre-Ida$\to$Ida & equal fleet & \IdaQbShareAlloc & \IdaQbRouteAlloc & positioning \\
 & full set & \IdaQbShareFull & \IdaQbRouteFull & capacity \\
\bottomrule
\end{tabular}
\end{table}

The EMS instance gives positioning a real stake. A high-priority call that cannot be served from its own borough counts as a response-time failure and is priced as one; low-priority calls may cross boroughs but pay the crossing cost. Both storms route the capacity lever \CertHigh\ (ratios \JonasQbShareFull\ and \IdaQbShareFull) and the reallocation lever to the gray zone (\JonasQbShareAlloc\ and \IdaQbShareAlloc). The reallocation ratio is roughly a tenth, more than zero but well below the high threshold. The realized shifts in borough demand share were small in both events, \JonasShift\ and \IdaShift\ percentage points across the five boroughs, and no design missed a high-priority call on any test state, so the exposure came from low-priority crossings and unmet low-priority demand.

Table~\ref{tab:ems-oos} reports the out-of-sample results. On Jonas the regret-robust design is identical to the sample-average design: the DRRO minimizer over the equal-fleet set coincides with the SAA design, so there is nothing to score. On Ida the regret-robust design does move. Holding the total fleet fixed, it shifts two units out of the Bronx into Brooklyn and Queens, the two boroughs where realized demand actually grew, and it cost less than the sample-average design on the storm window: the difference DRRO$-$SAA is \IdaDrroSaa\ per state (interval [\IdaDrroSaaLo, \IdaDrroSaaHi]), about 0.2\% of the sample-average design's modeled cost per state; since no design missed a high-priority call, the difference consists of low-priority crossing and shortage costs. It is one instance where the regret-robust positioning choice, made before the event, pointed in the direction the event actually went and came out ahead. The capacity decision on the same event goes the other way: the comparator that attains the sample-average design's worst-case regret at the middle radius is a smaller fleet, Ida brought more calls than expected, and that design lost \IdaZfullSaa\ per state. Direction, not exposure, decided the sign (Theorem~\ref{thm:transfer}(f)).

\begin{table}[htbp]
\centering
\caption{EMS out-of-sample cost per test state at $q=\qB$, with day-block bootstrap intervals. $z_{\mathrm{full}}$ is the comparator that attains the sample-average design's worst-case regret over the full design set (all fleet sizes and allocations). On Jonas the regret-robust design equals the sample-average design, so only two comparisons are shown.}
\label{tab:ems-oos}
\small
\begin{tabular}{llrrr}
\toprule
Event & comparison & difference & 95\% CI & fleet of design \\
\midrule
Jonas & $\mathrm{DRO}-\mathrm{SAA}$ & \JonasDroSaa & [\JonasDroSaaLo, \JonasDroSaaHi] & \JonasVDro \\
Jonas & $z_{\mathrm{full}}-\mathrm{SAA}$ & \JonasZfullSaa & [\JonasZfullSaaLo, \JonasZfullSaaHi] & \JonasVZfull \\
Ida & $\mathrm{DRRO}-\mathrm{SAA}$ & \IdaDrroSaa & [\IdaDrroSaaLo, \IdaDrroSaaHi] & \IdaVDrro \\
Ida & $\mathrm{DRO}-\mathrm{SAA}$ & \IdaDroSaa & [\IdaDroSaaLo, \IdaDroSaaHi] & \IdaVDro \\
Ida & $z_{\mathrm{full}}-\mathrm{SAA}$ & \IdaZfullSaa & [\IdaZfullSaaLo, \IdaZfullSaaHi] & \IdaVZfull \\
\bottomrule
\end{tabular}
\end{table}

\section{Concluding Remarks}
\label{sec:implications}

The tests change the order in which an operator should work. The sequence is: collect paired operating states (or work from marginals alone, accepting that some questions cannot be solved); run the dependence test; run the regret-to-premium-ratio test; if the ratio falls in the gray zone, run the separation to pin it down; check whether a forecast can supply the direction; and then, if the tree recommends it, solve the robust model, using the comparator pool already computed as the starting point. Six points follow.

\begin{enumerate}[label=\textbf{\arabic*.},leftmargin=2em,itemsep=0.8em]

\item \textbf{Check whether the missing pairing can change the decision before collecting more data.} Demand and travel can each be forecast accurately and still choose the wrong post (Example~\ref{ex:marginal-worlds}). The dependence test answers from the marginals alone whether the unknown pairing can overturn the selected plan. When it passes, paired data are still useful for pricing the operating budget (the covariance charge of Proposition~\ref{prop:covariance-charge}), but they are not needed to choose the allocation. For planning teams the practical rule is: spend the effort of joining demand and travel records within common periods only where the nominal margin between the leading designs is thin, and skip it where the test already closes the question.

\item \textbf{Separate the cost of a disruption from the cost of a bad decision.} Standard DRO minimizes worst-case total cost, and a disruption that adds the same expense to every design can pull it toward a worse choice (Theorem~\ref{thm:common-mode}); the cost reference already inside the recourse model is enough to do this (Corollary~\ref{cor:recourse-reversal}). The regret-to-premium ratio makes this concrete: it is the ratio of worst regret to worst premium. In the fleet data the ratio runs from \FoldDQaShare\ to \SandyQbShare\ at the calibrated radii. If the ratio is small, the statistic alone tells a manager that relocating capacity will not help much, and that the modeling effort is better spent on capacity, dispatch, or dynamic posting.

\item \textbf{Know which design choice carries the risk.} The window lemma shows that worst-case regret from fleet size lives only in the gap between two fleet sizes. With the total fleet held fixed, one allocation beats another only through state-dependent differences in shipping and shortage cost; an additive common cost cancels (Corollary~\ref{cor:common-mode}). In every instance in this paper the fleet-size decision carried most of the worst-case regret and the allocation decision carried little. The practical rule for a planning team: run the test across different fleet sizes and different numbers of sites, because the result is a joint property of the model, the design set and the city.

\item \textbf{The cheap tests bound worst-case regret before any robust model is solved.} The tests use only what a planning team already has: the marginal forecasts, the best design for each recorded period, and one small optimization. Most instances are settled at this stage, with no robust model ever run. The output is not a pass/fail flag but a number: worst-case regret in cost units, and its ratio to the worst-case premium. When the cheap bounds leave the ratio in a gray zone, the separation step gives an interval, and the manager should use that interval as a decision making tool to decide whether modeling effort is warranted.

\item \textbf{Use a forecast to absorb the trend, then hedge only what the forecast cannot see, and only in the direction the forecast points.} A worst-case model charges for every plausible shift, including the seasonal drift and the storm that was on the radar three days out. If the operator already has a forecast, whether a trend line, a seasonal index, or an event flag, the robust model should protect against the residual after that forecast, not the full range of history. More importantly, a test of exposure is two-sided. It says a different design could have mattered, but it does not say which way the world will move, and the hedge pays only if the realized shift lands on the side the design was moved toward. In the fleet data, three of four test years moved against the hedge, and on every fold the best fixed fleet in hindsight lay on the side the realized shift went (Table~\ref{tab:transfer}). The practical sequence is: fit the forecast first and check it on held-out periods; if it has predictive power, restrict the robust model to shifts in the predicted direction and let the regret number size the move. Under a forecast of falling demand, that move is a smaller fleet, not a larger one. If no forecast has out-of-sample prediction skill, the test still bounds worst-case regret relative to worst-case premium, which prices the symmetric hedge.

\item \textbf{Know when you have enough data.} If $\rho_N\le\rho^\sharp$, the SAA design is optimal under the true law with probability $1-\alpha$ (Theorem~\ref{thm:transfer}(d)) under i.i.d. sampling; the day-block states of Section~\ref{sec:empirics} are serially dependent, so the bootstrap, not Proposition~\ref{prop:weissman}, is the empirical check. In either case the operator needs no robust solve at all. If $\rho_N>\rho^\sharp$ and the ratio test does not confirm low, neither certificate applies; more data shrinks $\rho_N$ and may close the gap, but the tests do not rank that against solving the robust model. When $\bar c>0$, a nominal tie between designs is the one case the sample-size certificate does not resolve.

\end{enumerate}

Facility positioning cannot prevent a storm, a surge, or a holiday. It decides which customers stay reachable, which crossings must be made, and where scarce capacity is when demand moves. What the tests add is a way to say, before committing to any robust model, how much of a disruption's cost those placement choices can actually change, and, once a forecast supplies the direction, which side of the ledger that change lands on.

\section*{Acknowledgments}
The author is grateful to Jose Blanchet for many helpful discussions and comments on this work.

\bibliographystyle{plainnat}
\bibliography{references_full}

\appendix

\section{Proofs}
\label{app:proofs}

\subsection*{A.1\quad Proposition~\ref{prop:sandwich}(iii)--(v)}

\begin{proof}[(iii)]
$R(y;\cdot)$ is convex (Lemma~\ref{lem:convex-regret}); a convex function on a polytope attains its maximum at a vertex, and every $p\in\cU_\rho\subseteq\operatorname{conv}\{p_v\}$ is a convex combination of the $p_v$, so $R(y;p)\le\max_vR(y;p_v)$.
\end{proof}

\begin{proof}[(iv)]
Write the recourse dual for fixed $(y,s)$ (Appendix~\ref{app:service}, \eqref{eq:dual}) with multipliers $\pi_j$ for demand rows and $\mu_i\ge0$ for capacity rows. The dual objective is linear in demand for fixed multipliers; travel time enters the dual constraints, whose right-hand side is Lipschitz in $t$ on the bounded box: $|\ell_{ij}(t)-\ell_{ij}(t')|\le\gamma_{ij}|t-t'|$, and $|a_{ij}(t)-a_{ij}(t')|=|t-t'|$ for $a_{ij}(t)=t+\bar s$. Let $(\pi,\mu)$ be an optimal dual solution at $\xi'$ chosen within the bounds of Appendix~\ref{app:service} and set $\tilde\pi_j=\pi_j-\delta_j$ with $\delta_j=\max_i[\ell_{ij}(t'_{ij})-\ell_{ij}(t_{ij})+\mu_i(a_{ij}(t'_{ij})-a_{ij}(t_{ij}))]^+$. Then $(\tilde\pi,\mu)$ is dual feasible at $\xi$, so by weak duality $Q(y;\xi)\ge\pi^\top d-\mu^\top y-\sum_j\delta_j d_j$, and $Q(y;\xi')-Q(y;\xi)\le\pi^\top(d'-d)+\sum_j\delta_j d_j\le\sum_j\bar\pi|d_j-d'_j|+\sum_{ij}(\gamma_{ij}+\bar\mu_i)d^u_j|t_{ij}-t'_{ij}|$; the reverse inequality is symmetric. Here $\bar\pi=\max_jp_j$ bounds the demand multiplier (buying out demand is always feasible) and $\bar\mu_i=\max_{j:\,a^{\ell}_{ij}>0}(p_j-\gamma_{ij}t^{\ell}_{ij})_+/a^{\ell}_{ij}$ bounds the capacity multiplier (worth at most the net saving on the cheapest lane at the most favorable corner, clipped at zero). Rescaling coordinate differences by the ground metric's per-coordinate scales $s^D_j,s^T_{ij}$ gives the decision-free $\bar\Lambda$: each $C(y,\cdot)$ is $\bar\Lambda$-Lipschitz in the ground metric, and each differential $\Delta_{y,z}$ is $2\bar\Lambda$-Lipschitz. For $p\in\cU_\rho$ coupled to $\wh p$ by $\theta$, $\E_p\Delta-\E_{\wh p}\Delta=\sum\theta_{ks}(\Delta_s-\Delta_k)\le2\bar\Lambda\sum\theta_{ks}d_{ks}\le2\rho\bar\Lambda$; for $y=y_{\mathrm{SAA}}$, $\E_{\wh p}\Delta_{y,z}\le0$ for every $z$.
\end{proof}

\begin{proof}[(v)]
Under the stable-territory model $C(y,Z)=c_y+a_y^\top Z$ is affine in $Z$, so by Theorem~\ref{thm:relative-exposure} (Appendix~\ref{app:affine}) with the covariance ground metric the Wasserstein ball moves $\E[C(y,Z)]$ by exactly $\rho\,\sd(a_y^\top Z)$ on unrestricted support, and moves $\E\Delta_{y,z}$ by at most $\rho\,\sd(C_y-C_z)$; with $\E_{\wh p}\Delta_{y_{\mathrm{SAA}},z}\le0$ the ratio follows. On bounded support the movement is no larger, so the numerator bound remains valid, and the denominator is the actual positive finite-support premium.
\end{proof}

\subsection*{A.2\quad Corollary~\ref{cor:common-mode}}

\begin{proof}
For any comparator $z$, $\Delta_z=e_{y_{\mathrm{SAA}}}-e_z$: the common part cancels, so $\sigma_{\Delta_z}\le\operatorname{Lip}_d(e_{y_{\mathrm{SAA}}}-e_z)\le2\bar E$. For the premium, move mass $\rho/d_{k^\circ s^\circ}$ along the $f$-steepest pair: the common part contributes $\sigma_f\rho$ and the idiosyncratic part at most $\bar E\rho$ against it, so $\Pi\ge(\sigma_f-\bar E)\rho>0$ for $0<\rho\le\bar\rho_f$. Lemma~\ref{lem:phi} gives $\Omega\le2\bar E\rho$ and the ratio follows. For general $y$, $R(y;\wh p)$ enters the numerator additively.
\end{proof}

\subsection*{A.3\quad Proposition~\ref{prop:tree-correctness}}

\begin{proof}
(a)--(b) follow from $\Omega_{\mathrm{LB}}\le\Omega\le\Omega_{\mathrm{UB}}$ and $\Pi>0$. (c) Both maximizations have feasible sets nested in $\rho$, hence are nondecreasing. For non-monotonicity of $\omega$, let the states be $0,1,2$, with $d(s,t)=|s-t|$, nominal law $\delta_0$, and two designs having costs
$c_y=(1,2,11)$ and $c_z=(2,1,11)$. Then
\[
\Pi(y;\rho)=\min\{5\rho,10\},
\qquad
\Omega(y;\rho)=\max\bigl\{0,\min\{2\rho-1,1\}\bigr\}.
\]
Consequently,
$\omega(y;1/2)=0$, $\omega(y;1)=1/5$, and
$\omega(y;2)=1/10$: the ratio need not be monotone. A single-radius evaluation can therefore misclassify a design that the grid classifies correctly, which is why confirmed low requires every radius and confirmed high accepts one. (d) The certificate is a worst case over all couplings; a fail leaves open that the attaining coupling is physically excluded (Remark~\ref{rem:onesided}). (e) On the event that $p_0\in\cU_\rho$ (symmetric-ball membership) and the forecast side is correct, $p_0\in\cU^\pm_\rho$ and the one-sided bounds apply by the argument of Theorem~\ref{thm:transfer}; when the forecast side is wrong, the symmetric bounds remain available only if $p_0\in\cU_\rho$.
\end{proof}

\subsection*{A.4\quad Proposition~\ref{prop:weissman}}

\begin{proof}[Proposition~\ref{prop:weissman}]
\citet{weissman2003} give, for $p_0,\wh p_N$ on a finite set of size $S$, $\Pp(\|\wh p_N-p_0\|_1\ge\varepsilon)\le\exp(-N\varepsilon^2/2+S\ln2)$; inverting at $\alpha$ gives $\varepsilon_N(\alpha)=\sqrt{2(S\ln2+\ln(1/\alpha))/N}$. The coupling that moves the $\ell_1$ excess mass pays at most $D_{\max}$ per unit, and moved mass is $\|\cdot\|_1/2$, so $W_d\le(D_{\max}/2)\varepsilon_N$. The medoid correction is the triangle inequality applied observation by observation.
\end{proof}

\subsection*{A.5\quad Proofs of the results in Section~\ref{sec:model}}

\begin{proof}[Proof of Proposition~\ref{prop:covariance-charge}]
By linearity, $\E_P[C(y,Z)]-\E_{P_\perp}[C(y,Z)]=\sum_{ij}(a_y)_{ij}\bigl(\E_P[D_jT_{ij}]-\E_{P_\perp}[D_jT_{ij}]\bigr)$. Under the product law $\E_{P_\perp}[D_jT_{ij}]=\mu_j\tau_{ij}$, and under $P$ the difference $\E_P[D_jT_{ij}]-\mu_j\tau_{ij}$ is the covariance, finite because the marginals are bounded. Substituting $(a_y)_{ij}=\gamma_{ij}r_{ij}$ gives the identity; the bound is Cauchy--Schwarz applied termwise.
\end{proof}

\begin{proof}[Proof of Theorem~\ref{thm:common-mode}]
(i) Fix $P\in\cU$. The perturbed optimal cost is $C^*_\lambda(P)=\min_z\E_P[C(z)+\lambda h]=C^*(P)+\lambda\E_Ph$, so $R_\lambda(y;P)=\E_PC(y)+\lambda\E_Ph-C^*_\lambda(P)=R(y;P)$ for every $y$ and $P$; the supremum over $\cU$ preserves the whole objective.
(ii) Each $c_k$ is continuous on the compact $\cY$, hence bounded; $G_h$ is a finite maximum of such functions; every denominator in \eqref{eq:common-cost-threshold} is positive and there are finitely many indices, so $\lambda_0<\infty$. Fix $\lambda>\lambda_0$ and $y$. For $k\notin I_h$, $c_k(y)-G_h(y)<\lambda(a^*-a_k)$, i.e.\ $c_k(y)+\lambda a_k<G_h(y)+\lambda a^*$; for $k\in I_h$ the largest perturbed cost is exactly $G_h(y)+\lambda a^*$. Expectation is linear in the law, so the supremum over $\cU$ equals the maximum over generators, $\lambda a^*+G_h(y)$. The term $\lambda a^*$ does not depend on $y$, which gives \eqref{eq:effective-ambiguity}; when $I_h=\{r\}$ the strict inequalities exclude every other generator and every law in their convex hull distinct from $P^r$.
\end{proof}

\begin{proof}[Proof of Corollary~\ref{cor:common-cost-regret}]
Let $\cU$ be all laws on two states and let costs be $C(A)=(2,1)$, $C(B)=(1,M+1)$. Worst-case costs are $2$ and $M+1$, so DRO uniquely selects $A$. Set $h=(1,0)$; then $I_h=\{1\}$ and $\lambda_0=M$. For $\lambda>M$ the worst-case costs become $\lambda+2$ and $\lambda+1$, so DRO uniquely selects $B$. With $q=P(1)$, $\E_PC(A)-\E_PC(B)=(M+1)q-M$ before and after, so $R(A;P)=\max\{0,(M+1)q-M\}$ and $R(B;P)=\max\{0,M-(M+1)q\}$, with suprema $1$ and $M$. DRRO retains $A$; the new DRO optimizer has $M$ times its worst-case regret.
\end{proof}

\begin{proof}[Proof of Corollary~\ref{cor:recourse-reversal}]
Exactly one post is opened; both have zero fixed cost, capacity $M+3$, unit resource consumption, and shortage penalty $p=M+2$. The ambiguity set contains all laws on two states with $(d,\ell_A,\ell_B)=(M+3,\,1,\,(M+2)/(M+3))$ in state 1 and $(1,\,1,\,M+1)$ in state 2. Every unit shipping cost is below the penalty and either post can serve all demand, so every optimal recourse serves every unit: $C_A=(M+3,1)$, $C_B=(M+2,M+1)$, worst cases $M+3$ and $M+2$, and total-cost DRO uniquely selects $B$. Subtracting the common baseline $pd$ gives $\widetilde C_A=(-(M+1)(M+3),-(M+1))$ and $\widetilde C_B=(-(M+2)^2,-1)$; now state 2 governs both worst cases, the values are $-(M+1)$ and $-1$, and baseline-relative DRO uniquely selects $A$. Under either accounting $C_A-C_B=(1,-M)$, so with $q=P(1)$ the regrets are $\max\{0,(M+1)q-M\}$ and $\max\{0,M-(M+1)q\}$ with suprema $1$ and $M$; DRRO selects $A$. Shortage is zero for both posts in both states, so no nonnegative shortage-risk budget distinguishes them.
\end{proof}

\subsection*{A.6\quad Proofs of the results in Section~\ref{sec:cert1}}

\begin{proof}[Proof of Lemma~\ref{lem:product-rearrangement}]
By Tonelli's theorem applied to the exceedance representation $XY=\int_0^\infty\!\int_0^\infty \mathbf{1}\{X>s,\,Y>t\}\,ds\,dt$,
\[
\E[XY]=\int_0^\infty\!\int_0^\infty\Pp(X>s,\,Y>t)\,ds\,dt.
\]
Fix $(s,t)$ and write $a=\Pp(X>s)$, $b=\Pp(Y>t)$. Every joint law with these marginals satisfies the Fr\'echet--Hoeffding bounds
\[
\max\{0,\,a+b-1\}\;\leq\;\Pp(X>s,\,Y>t)\;\leq\;\min\{a,b\}.
\]
The comonotone coupling makes both $X$ and $Y$ increasing functions of the same uniform $V$, so the exceedance events $\{X>s\}$ and $\{Y>t\}$ are both upper intervals in $V$; their intersection has probability $\min\{a,b\}$, the upper bound. The countermonotone coupling makes one increasing and the other decreasing in $V$, so the two events are an upper and a lower interval, with intersection $\max\{0,a+b-1\}$, the lower bound. These identifications hold for every pair of thresholds $(s,t)$ simultaneously, and remain valid at the endpoints even when the marginals have atoms (the endpoints themselves carry probability zero). Integrating the bounds over $(s,t)$ gives $\E[XY]=U$ under comonotonicity and $\E[XY]=L$ under countermonotonicity; since the Fr\'echet--Hoeffding bounds hold for every admissible coupling, these are the supremum and infimum.
\end{proof}

\begin{proof}[Proof of Theorem~\ref{thm:marginal-threshold}]
Let $X_{ij}=\phi_{ij}(T_{ij})$. Since $\phi_{ij}$ is nondecreasing, $X_{ij}$ has quantile function $\phi_{ij}\circ Q_{G_{ij}}$ in the generalized-inverse convention (if $\phi_{ij}$ jumps, the inverse lands on the jump, so $\Pp(X_{ij}\ge\phi(Q_G(u)))\ge u>\Pp(X_{ij}\ge\phi(Q_G(u))+\epsilon)$). For stable plans $\E_P[C(y)-C(z)]=c_y-c_z+\sum_{ij}b_{ij}\E_P[D_jX_{ij}]$, and Lemma~\ref{lem:product-rearrangement} places each product in $[L_{ij},U_{ij}]$, giving the upper bound. For attainment take independent uniforms $V_1,\dots,V_n$, one per region, and set $D_j=Q_{F_j}(V_j)$, $T_{ij}=Q_{G_{ij}}(V_j)$ if $b_{ij}\ge0$ and $T_{ij}=Q_{G_{ij}}(1-V_j)$ otherwise. Every marginal is preserved; within region $j$ the single variable $V_j$ drives demand and all lane travel times, so lanes with positive coefficients attain $U_{ij}$ and lanes with negative coefficients attain $L_{ij}$ simultaneously. The construction is admissible because $\mathfrak C$ fixes only scalar marginals. The equivalence follows: if $K(y,z)\le0$ for all $z$ then $\E_PC(y)\le\E_PC(z)$ under every $P$; if $K(y,z)>0$ for some $z$, the attaining law makes $z$ strictly cheaper.
\end{proof}

\subsection*{A.7\quad Proofs of the results in Section~\ref{sec:cert2} and Appendix~\ref{app:reviewed}}

\begin{proof}[Proof of Lemma~\ref{lem:convex-regret}]
The best achievable cost $C^*(p)=\min_{z\in\cY}c_z^\top p$ is the pointwise minimum of finitely many linear functions, hence concave and piecewise linear. Subtracting the linear function $c_y^\top p$ from a concave function gives a convex function, and the maximum representation follows by writing the minimum as the negative of a maximum: $-C^*(p)=\max_z(-c_z)^\top p$.
\end{proof}

\begin{proof}[Proof of Proposition~\ref{prop:vertex}]
The map $\theta\mapsto p_\theta$ is linear, so $\theta\mapsto R(y;p_\theta)$ is convex on $\Theta_\rho$ by Lemma~\ref{lem:convex-regret}, and a convex function on a bounded polytope attains its maximum at a vertex. To count vertex dimensions, add a slack variable to the budget constraint: $\Theta_\rho$ has $N$ row-sum equalities plus one budget inequality, for $N+1$ binding rows in $NS+1$ nonnegative variables. A basic feasible solution has at most $N+1$ positive components. If the slack is positive, the budget is not binding, and at most $N$ entries are positive: each row of sum $w_k>0$ has exactly one positive entry, so every observation moves wholesale to a single state. If the slack is zero, at most one additional entry can be positive, so at most one row is split between two states. For DRO the objective $c_y^\top p_\theta$ is linear in $\theta$, and the same vertex argument applies with no convexity step needed.
\end{proof}

\begin{proof}[Proof of Corollary~\ref{cor:det-relabelling}]
The lower bound holds because the pure relabellings are a subset of $\Theta_\rho$. For the upper bound, take the vertex $\theta^*$ attaining $\Omega(y)$ from Proposition~\ref{prop:vertex}. If no row is split, $\theta^*$ is already a pure relabelling and the gap is zero. Otherwise, at most one row $k_0$ is split, between states $s_a$ and $s_b$. Move all of row $k_0$ to the destination $s\in\{s_a,s_b\}$ that minimizes $d_{k_0s}$ among the destinations receiving mass from the split row; that distance is no larger than the weighted average of the split destinations' distances, so the budget constraint remains feasible, and the resulting $\theta'$ is a pure relabelling. The law $p'$ differs from $p^*$ by at most $w_{k_0}$ in total variation. Let $z^*$ be the comparator that attains the maximum in $R(y;p^*)=\max_z(c_y-c_z)^\top p^*$; then
\[
(c_y-c_{z^*})^\top(p^*-p')
\;\leq\;
w_{k_0}\cdot\operatorname{osc}_s\bigl(\Delta_{y,z^*}\bigr),
\]
and since $R(y;p')\ge(c_y-c_{z^*})^\top p'$ by the maximum representation in Lemma~\ref{lem:convex-regret}, the gap $\Omega(y)-R(y;p')$ is at most $w_{k_0}$ times the oscillation of $\Delta_{y,z^*}$ across the state space, which is the second factor in the upper bound.
\end{proof}

\begin{proof}[Proof of Theorem~\ref{thm:separation}]
Since $p_s\ge0$ and the recourse objective and constraints are linear in $(q,u)$, $p_s\min_{q,u}\operatorname{cost}(q,u;z,\xi^s)=\min_{Q,U}\operatorname{cost}(Q,U)$ over the scaled set $\{\sum_iQ_{ij}+U_j=p_sd^s_j,\ \sum_ja_{ij}Q_{ij}\le p_sv_i\}$ (substitute $Q=p_sq$, $U=p_su$; both sides vanish when $p_s=0$). Hence $\sum_sp_s[c_{\bar y}(s)-C(z,\xi^s)]=c_{\bar y}^\top p-F(z)+\max_{Q,U}\sum_s(-\operatorname{cost}_s)$, and the outer maximization over $(p,z)$ merges with the inner one. The only nonlinearity is $p_sv_i$, a product of a variable in $[0,1]$ and a bounded integer, which the binary expansion with McCormick constraints linearizes exactly. Finally $\sum_sp_sF(z)=F(z)$ since $\sum_sp_s=1$.
\end{proof}

\begin{proof}[Proof of Proposition~\ref{prop:sandwich}(i)--(ii)]
(i) Each $\operatorname{LP}(y,z)$ maximizes over a subset of the adversary's feasible set, and $p^*_{\mathrm{DRO}}(y)$ is feasible for the regret adversary.
(ii) For any $p$, $\min_z\sum_sp_sc_z(s)\ge\sum_sp_s\min_zc_z(s)$, so $R(y;p)\le\sum_sp_sg_s(y)$; maximizing over $p$ gives the bound. Equality holds exactly when the maximizing $p^*$ puts weight only on states whose optimal designs coincide; otherwise the shortfall at $p^*$ is the value of knowing the state before choosing the design.
\end{proof}

\begin{proof}[Proof of Lemma~\ref{lem:phi}]
By the row-sum constraint $\sum_s\theta_{ks}=w_k$, the reference term $\sum_k w_k\,v_{s(k)}$ can be written as $\sum_{k,s}w_k\,\mathbf{1}\{s=s(k)\}\,v_s$, and the objective becomes $\sum_{k,s}\theta_{ks}(v_s-v_{s(k)})$. This is the value of a linear program whose right-hand side is the budget $\rho$; the value function of a parameterized LP is concave, piecewise linear, and nondecreasing in the right-hand side, with value zero at $\rho=0$. For the slope bound, each pair satisfies $v_s-v_{s(k)}\le\sigma_v^+\,d_{ks}$, so the objective is at most $\sigma_v^+\sum_{k,s}\theta_{ks}\,d_{ks}\le\sigma_v^+\,\rho$. If $\sigma_v>0$, let $(k^\circ,s^\circ)$ be a maximizing pair; for $\rho\le w_{k^\circ}d_{k^\circ s^\circ}=\bar\rho_v$, moving $\rho/d_{k^\circ s^\circ}$ units of mass along that pair is feasible and attains $\sigma_v\rho$. If $\sigma_v\le 0$, every pair coefficient $v_s-v_{s(k)}$ is nonpositive, so the objective is maximized by keeping all mass at its home state, giving $\Phi_v\equiv 0$.
\end{proof}

\begin{proof}[Proof of Theorem~\ref{thm:saa-sharp}]
(i) $\Omega(y;\rho)=\max_z[\Phi_{\Delta_z}(\rho)-g_z]$ is a finite maximum of continuous nondecreasing functions, each $\le0$ at $\rho=0$; the set $\{\rho:\Omega\le0\}$ is the interval $[0,\rho^\sharp]$. On it $R(y_{\mathrm{SAA}};p)=0$ for every $p\in\cU_\rho$, so $y_{\mathrm{SAA}}$ minimizes worst-case regret; any DRRO optimizer $y'$ has $\Omega(y';\rho)\le0$, hence $R(y';\wh p)\le0$, so $y'$ is SAA-optimal.
(ii) $\Omega>0$ iff some $z$ has $\Phi_{\Delta_z}(\rho)>g_z$; without the budget row, $\E_p\Delta_z-\E_{\wh p}\Delta_z=\sum\theta_{ks}(\Delta_z(s)-\Delta_z(\wh\xi_k))$ and $\E_{\wh p}\Delta_z=-g_z$, so the crossing condition is the displayed constraint and the LP value is the least budget achieving it. If $\max_s\Delta_z(s)\le0$, then $\E_p\Delta_z\le0$ under every law, so $\Phi_{\Delta_z}(\rho)-g_z\le0$ at every budget and $\rho^\sharp_z=+\infty$. When $\max_s\Delta_z(s)>0$, the LP optimum yields a zero-differential crossing law $p^0$ with $\E_{p^0}\Delta_z=0$; pick $s^+$ with $\Delta_z(s^+)>0$ and mix $p_\epsilon=(1-\epsilon)p^0+\epsilon\delta_{s^+}$, coupling the residual mass $\epsilon w_k$ of each row to $s^+$: the budget is $(1-\epsilon)\rho^\sharp_z+\epsilon\sum_kw_kd_{ks^+}$, arbitrarily close above the LP value, while $\E_{p_\epsilon}\Delta_z=\epsilon\Delta_z(s^+)>0$ gives strict positivity there. Hence the infimum of positive-regret radii for $z$ is exactly the LP value, and $\rho^\sharp=\min_z\rho^\sharp_z$, equal to $+\infty$ when every comparator has $\rho^\sharp_z=+\infty$.
(iii) Near the crossing $\Phi_{\Delta_z}(\rho)\le\sigma_{\Delta_z}\rho$, so $\rho^\sharp_z\ge g_z/\sigma_{\Delta_z}$; comparators with $\sigma_{\Delta_z}\le0$ never cross and are absent from $\rho^{**}$; $\sigma\le L\le2\bar\Lambda$, and $g_z\ge g_{(2)}$ for every $z\ne y$. A comparator with $L_z=0$ has $\Delta_z$ constant across states, never crosses, and is omitted from $\rho^*$. If $\bar\Lambda=0$, every cost is constant across states and $\Omega\equiv0$; if the design set is a singleton there is no comparator, and again $\Omega\equiv0$ with $\rho^\sharp=+\infty$. A nominal tie with a positive slope gives $\Omega>0$ at every $\rho>0$.
\end{proof}

\begin{proof}[Proof of Theorem~\ref{thm:law-radius}]
(a) is Lemma~\ref{lem:phi} applied to $c_y$: if $\sigma_\Pi>0$ the value is $\sigma_\Pi\rho$ up to the exhaustion budget of the chosen steepest pair and never exceeds $\sigma_\Pi\rho$; if $\sigma_\Pi\le0$ every admissible move has nonpositive gain and $\Phi_{c_y}\equiv0$. (b) is Lemma~\ref{lem:phi} applied to each $\Delta_z$ through \eqref{eq:omega-phi}. (c) Continuity is a quotient of continuous functions with positive denominator; on $(0,\rho^\sharp]$ the numerator vanishes. When $0<\rho^\sharp<\infty$, the right slope of the concave piecewise-linear $\Phi_{\Delta_z}$ at $\rho^\sharp$ is the smallest optimal dual of its budget row; the right derivative of the finite maximum in \eqref{eq:omega-phi} is the largest right slope among the comparators whose crossing radius equals $\rho^\sharp$, and the quotient rule gives $\lambda^\sharp/\Pi$. The envelope bound divides $\max_z(\sigma^+\rho-g_z)$ by $\Pi\ge\sigma_\Pi\min(\rho,\bar\rho_\Pi)$, positive for $\sigma_\Pi>0$ and $\rho>0$.
\end{proof}

\begin{proof}[Proof of Lemma~\ref{lem:window}]
$(R-V_y)^+-(R-V_z)^+=\min(R,V_z)-\min(R,V_y)$, which is $0$ for $R\le V_y$, $R-V_y$ in the window, and $\Delta V$ for $R\ge V_z$; multiply by $p$ and add $-f\Delta V$.
\end{proof}

\begin{proof}[Proof of Proposition~\ref{prop:regimes}]
(ii) By Lemma~\ref{lem:window}, $\Delta_z(s)-\Delta_z(k)\le p\,\Delta V$ for every pair, so $\Phi_{\Delta_z}(\rho)\le p\,\Delta V\,m(\rho)$. The nominal margin is $g_z=f\Delta V-p\,\E_{\wh p}[\min((R-V_{\mathrm{SAA}})^+,\Delta V)]\ge f\Delta V-p\,\E_{\wh p}[(R-V_{\mathrm{SAA}})^+]$; for $V_z<V_{\mathrm{SAA}}$, $g_z\ge(p-f)\Delta V-p\,\E_{\wh p}[(V_{\mathrm{SAA}}-R)^+]$. Subtract. The right side is affine in $\Delta V$ with slope $p\,m(\rho)-f<0$ when $m(\rho)<f/p$, so it is negative for $\Delta V$ large; at $\Delta V=0$ the gain is zero. (iii) If $\Delta_{y,z}$ is constant on the support then $\E_p\Delta_{y,z}=\E_{\wh p}\Delta_{y,z}=-g_z\le0$ for every $p$ with that support, so $\Phi_{\Delta_z}\equiv0$ and $\Omega=0$; the two sufficient conditions are read off Lemma~\ref{lem:window} and the recourse model.
\end{proof}

\begin{proof}[Proof of Proposition~\ref{prop:ordering}]
(i) Choose a worst-case law $P'$ attaining $\Psi(V')$ whose transport never decreases required capacity: replacing a downward move by staying home weakly increases shortage cost and reduces transport cost, so such a law exists among the maximizers. For feasible $V<V'\le V_{\mathrm{SAA}}$,
\[
\Psi(V)-\Psi(V')
\ge \mathbb E_{P'}(c_V-c_{V'})
\ge \mathbb E_{\widehat P}(c_V-c_{V'})
\ge0.
\]
The middle inequality follows from the window differential's monotonicity (Lemma~\ref{lem:window}: for $V<V'$ the differential is nondecreasing in $R$, and $P'$'s transport only moves mass up); the last follows from nominal convexity, since $c_V$ is convex in $V$ and $V_{\mathrm{SAA}}$ nominally minimizes it. This gives a convex extension of $\Psi$, nonincreasing along feasible levels $V\le V_{\mathrm{SAA}}$, so $\Psi(V)\ge\Psi(V_{\mathrm{SAA}})$ there and at least one DRO minimizer satisfies $V_{\mathrm{DRO}}\ge V_{\mathrm{SAA}}$. Uniqueness of the nominal optimizer makes the last inequality strict for $V<V_{\mathrm{SAA}}$ when $V'=V_{\mathrm{SAA}}$, so every DRO minimizer does in that case.
(ii) For fixed $P$, $V\mapsto\E_Pc_V$ and $V\mapsto R(V;P)$ are convex with common minimizer set $M(P)$ over $\mathcal V$, so both robust objectives are suprema of convex functions and admit convex extensions to the convex hull of $\mathcal V$. Outside $[V_-,V_+]$, moving to the nearer endpoint strictly improves every law's cost: if $V>V_+$ then $V$ exceeds the right endpoint of every $M(P)$, where the convex cost is strictly increasing along feasible capacity levels. Evaluating at a worst-case law attaining the robust objective at that endpoint excludes an outside optimizer, for the DRO and the DRRO objective alike.
(iii) The window differential and stochastic dominance show that any wrong-side capacity is weakly dominated by $V_{\mathrm{SAA}}$ under every admissible law, hence for both cost and regret: under right shifts, $c_V-c_{V_{\mathrm{SAA}}}$ is nondecreasing in $R$ for $V<V_{\mathrm{SAA}}$ and the admissible laws put no more mass on low $R$ than $\wh P$; under left shifts the argument is symmetric for $V>V_{\mathrm{SAA}}$. A DRO and a DRRO minimizer can therefore be chosen at $V_{\mathrm{SAA}}$'s side. Nominal uniqueness makes this dominance strict, so with a unique nominal optimizer every robust minimizer satisfies the inequality.
\end{proof}

\begin{proof}[Proof of Theorem~\ref{thm:transfer}]
On the event that $p_0$ is supported on the state set defining $\cU_\rho$ and $W_d(\wh p_N,p_0)\le\rho_N$, we have $p_0\in\cU_{\rho_N}\subseteq\cU_\rho$, so every worst case over the ball bounds the true value; (a)--(e) follow, uniformly in $y$ because the event depends only on $(\wh p_N,p_0)$, with (d) from Theorem~\ref{thm:saa-sharp}(i). For (f), the first clause is the two-post instance with $p_0=\wh p$; for the second, take the capacity coordinate with a right comparator certifying $\omega\ge\tau_{\mathrm{high}}$ and $p_0$ a left shift of $\wh p$ within the ball: by Lemma~\ref{lem:window}, $\E_{p_0}[c_{V_{\mathrm{DRRO}}}-c_{V_{\mathrm{SAA}}}]\ge\E_{\wh p}[\cdot]=g>0$ since the differential is nonincreasing in $R$ against a larger comparator.
\end{proof}

\begin{proof}[Proof of Proposition~\ref{prop:one-sided}]
(i) Each inequality $F_p(R_{(r)})\le F_{\wh p}(R_{(r)})$ at a support point is linear in $p$ and hence in $\theta$; adding $S$ inequality rows with slacks to the basis count of Proposition~\ref{prop:vertex} gives at most $N+1+S$ basic variables. (ii) Restrictions of the feasible set. (iii) By Lemma~\ref{lem:window}, $\Delta_z(s)$ is nondecreasing in $R_s$ for $V_z>V_{\mathrm{SAA}}$; expectations of nondecreasing functions are nonincreasing under left first-order shifts, so $\E_p\Delta_z\le\E_{\wh p}\Delta_z=-g_z$. The statement for $V_z<V_{\mathrm{SAA}}$ is symmetric. Proposition~\ref{prop:ordering}(iii) gives the existence and uniqueness qualifications for the location of the one-sided robust designs.
\end{proof}

\section{Affine Theory}
\label{app:affine}

Under stable territories, cost is affine in lane workloads, and the Wasserstein adversary's effect has a closed form that makes the difference between the two criteria visible as a norm.

\begin{theorem}[Relative network exposure]
\label{thm:relative-exposure}
Let the ambiguity set be the radius-$\rho$ order-one Wasserstein ball around the empirical workload law, with ground cost $\norm{u-v}$ and laws on all of $\R^{mn}$ with finite first moments, and let $\norm{\cdot}_*$ be the dual norm \citep{blanchet2019}. For every pair of plans,
\begin{equation}
\sup_{P\in\cU_\rho}\E_P[\Delta_{y,z}(Z)]=\wh\E[\Delta_{y,z}(Z)]+\rho\norm{a_y-a_z}_*,
\qquad
\sup_{P\in\cU_\rho}\E_PC(y,Z)=\wh\E[C(y,Z)]+\rho\norm{a_y}_*,
\label{eq:relative-exposure}
\end{equation}
both attained, and $V_{\mathrm{DRRO}}=\min_y\max_z\{\wh\E[C(y)-C(z)]+\rho\norm{a_y-a_z}_*\}$.
\end{theorem}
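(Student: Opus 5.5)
The plan is to reduce both identities in \eqref{eq:relative-exposure} to a single fact. For an affine function $Z\mapsto c+a^\top Z$ on $\R^{mn}$, the order-one Wasserstein ball raises the expectation by exactly $\rho\norm{a}_*$. The first identity is that fact for $a=a_y-a_z$, since $\Delta_{y,z}(Z)=(c_y-c_z)+(a_y-a_z)^\top Z$. The second is the same fact for $a=a_y$. The DRRO formula then follows by exchanging the two maximizations, which is valid because both are suprema: $\sup_P\max_z\E_P\Delta_{y,z}=\max_z\sup_P\E_P\Delta_{y,z}$. We then substitute the first identity and minimize over $y$.

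The upper bound comes from a coupling. Fix $P\in\cU_\rho$ and a coupling $\pi$ of $\wh P$ and $P$ with $\E_\pi\norm{Z'-Z}\le\rho$. Hölder's inequality for dual norms gives $a^\top(Z'-Z)\le\norm{a}_*\norm{Z'-Z}$. Taking expectations under $\pi$ yields $\E_P[a^\top Z]-\wh\E[a^\top Z]\le\rho\norm{a}_*$. Finite first moments make every term integrable. If the infimum over couplings is not attained, we take $\pi$ within $\epsilon$ of optimal and let $\epsilon\to0$, so the supremum is bounded by $\wh\E[c+a^\top Z]+\rho\norm{a}_*$. The duality result of \citet{blanchet2019} for Lipschitz objectives gives the same bound, but the direct argument is self-contained.

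For attainment, choose a unit vector $u^*$ with $\norm{u^*}=1$ and $a^\top u^*=\norm{a}_*$. Such a vector exists because the dual norm is a maximum over the compact unit sphere in finite dimension. Define $P^*$ as the pushforward of $\wh P$ under $Z\mapsto Z+\rho u^*$, which shifts every atom of the empirical workload law by the same vector. The deterministic coupling costs exactly $\rho$, so $P^*\in\cU_\rho$. Moreover $\E_{P^*}[a^\top Z]=\wh\E[a^\top Z]+\rho\norm{a}_*$, so the supremum is attained. If $a=0$, any $u^*$ works, and the identity reduces to $\wh\E$.

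The main obstacle is the support convention rather than the algebra. The theorem requires laws on all of $\R^{mn}$: the shifted law $P^*$ can leave the bounded box of workloads, and it can even produce negative workloads. Attainment therefore depends on the unrestricted support in the statement, and I will state explicitly that the result does not transfer to the finite-support ball of \eqref{eq:transport-polytope}. This is consistent with Proposition~\ref{prop:sandwich}(v), where on bounded support only the upper bound survives. A second point is that the two identities are attained at different shifts, $u^*(a_y-a_z)$ and $u^*(a_y)$, so the worst-case regret law and the worst-case cost law generally differ. I will note this, since it is the structural reason $\Sigma$ can take either sign.
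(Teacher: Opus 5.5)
Your proof is correct and follows the paper's argument: coupling plus the dual-norm inequality for the upper bound, attainment by translating every observation by $\rho v$ with $\norm{v}=1$ and $a^\top v=\norm{a}_*$, and the total-cost identity as the case $a=a_y$; your exchange $\sup_P\max_z=\max_z\sup_P$ supplies the $V_{\mathrm{DRRO}}$ formula, which the paper states without comment. One correction to your closing aside, which the theorem does not need: distinct worst-case laws are not what lets $\Sigma$ go negative, since at a common worst-case law $P^*$ the SAA design has $\Sigma=C^*(P^*)-C^*(\wh P)$. For instance, take the design set $\{y,z\}$ with $y$ the empirical optimizer and nominal margin $g_z\ge0$, $a_y=(1,0)$, $a_z=(0,1)$, and the $\ell_\infty$ ground norm: the single shift $v=(1,-1)$ attains both identities, yet $\Sigma=g_z-\rho<0$ for $\rho>g_z$; likewise, in the paper's two-state example ($c_y=(2,1)$, $c_z=(0,4)$) both adversaries pick $p=(1,0)$ and $\Sigma=-1.5$.
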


\begin{proof}
With $b=a_y-a_z$ and any coupling $\pi$ of $P$ and the empirical law, $\E_P[b^\top Z]-\wh\E[b^\top Z]=\E_\pi[b^\top(Z-\wh Z)]\le\norm{b}_*\E_\pi\norm{Z-\wh Z}$; the infimum over couplings gives the upper bound. For attainment pick $v$ with $\norm v=1$, $b^\top v=\norm b_*$, and translate every observation by $\rho v$: the coupling to the originals costs $\rho$, and the expected differential rises by exactly $\rho\norm b_*$. The total-cost statement is the same with $a_y$.
\end{proof}

Standard DRO penalizes $\norm{a_y}_*$, the sensitivity of a design's total cost; DRRO penalizes $\norm{a_y-a_z}_*$, the sensitivity of its advantage over each alternative. An exposure every design carries contributes to the first and cancels from the second. Unrestricted support is what makes the formula exact; on a workload polytope it is an upper bound, and the exact value is given by the following dual.

\begin{proposition}[Exact affine formula on a workload polytope]
\label{prop:polytope}
With $\mathcal Z=\{w:Hw\le h\}$ compact with nonempty interior containing the observations \citep{blanchet2019},
\[
\sup_{P\in\cU_\rho(\mathcal Z)}\E_P\Delta_{y,z}=
\inf_{\lambda\ge0,\,v_k\ge0}\Bigl\{c_y-c_z+\lambda\rho+\frac1N\sum_k\bigl[b^\top\wh Z_k+v_k^\top(h-H\wh Z_k)\bigr]:\ \norm{b-H^\top v_k}_*\le\lambda\ \forall k\Bigr\}.
\]
\end{proposition}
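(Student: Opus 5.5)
The plan is to reduce the supremum to the affine linear part and then apply the standard strong duality for Wasserstein DRO with a piecewise-linear (here linear) loss on a polyhedral support, in the form of \citet{blanchet2019} and \citet{esfahani2018}.

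\emph{Step 1 (reduction).} Write $\Delta_{y,z}(w)=c_y-c_z+b^\top w$ with $b=a_y-a_z$. The constant passes through the supremum, so it suffices to compute $\sup_{P\in\cU_\rho(\mathcal Z)}\E_P[b^\top Z]$.

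\emph{Step 2 (Lagrangian dual).} Dualize the budget $\E_\pi\norm{Z-\wh Z}\le\rho$ with a multiplier $\lambda\ge0$. Strong duality for Wasserstein balls gives
\[
\sup_P\E_P[b^\top Z]=\inf_{\lambda\ge0}\Bigl\{\lambda\rho+\tfrac1N\textstyle\sum_k\sup_{w\in\mathcal Z}\bigl[b^\top w-\lambda\norm{w-\wh Z_k}\bigr]\Bigr\}.
\]
This holds because the loss is upper semicontinuous and $\mathcal Z$ is compact, and because $\rho>0$ gives a Slater point (the empirical law is interior to the ball).

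\emph{Step 3 (inner problem).} For each $k$, the inner supremum is a convex program over the polytope. Write $-\lambda\norm{w-\wh Z_k}=\min_{\norm{q}_*\le\lambda}q^\top(w-\wh Z_k)$ and swap max and min by Sion's theorem, which applies since $\mathcal Z$ is compact and convex. This turns the inner problem into
\[
\min_{\norm q_*\le\lambda}\Bigl\{-q^\top\wh Z_k+\max_{Hw\le h}(b+q)^\top w\Bigr\}.
\]
Now apply LP duality: $\max_{Hw\le h}(b+q)^\top w=\min\{v_k^\top h:\ v_k\ge0,\ H^\top v_k=b+q\}$. The LP is feasible and bounded because $\mathcal Z$ is nonempty and compact.

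Eliminating $q=H^\top v_k-b$ gives the inner value
\[
\min_{v_k\ge0}\bigl\{b^\top\wh Z_k+v_k^\top(h-H\wh Z_k)\bigr\}\quad\text{subject to}\quad \norm{b-H^\top v_k}_*\le\lambda.
\]
Combining the $k$ minima with the outer infimum over $\lambda$ yields the displayed formula.

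\emph{Main obstacle.} The delicate point is the strong-duality step, i.e.\ justifying no duality gap in Step 2 and the interchange in Step 3. I would cite the Wasserstein duality theorem of \citet{blanchet2019} or \citet{gao2023} for Step 2, relying on the nonempty interior and on the observations lying in $\mathcal Z$. Step 3 is then finite-dimensional convex duality with compact feasible sets, which is routine.
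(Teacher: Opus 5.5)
Your proof is correct. At the outer level it matches the paper's: both use Wasserstein strong duality to reduce the problem to one state problem $\sup_{Hw\le h}\{b^\top w-\lambda\norm{w-\wh Z_k}\}$ per observation.

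The two routes differ in how that inner problem is dualized. The paper attaches a multiplier $v\ge0$ to $Hw\le h$ and substitutes $s=w-\wh Z_k$. It then reads off $\sup_s\{(b-H^\top v)^\top s-\lambda\norm{s}\}$ as the conjugate of $\lambda\norm{\cdot}$, which is zero if $\norm{b-H^\top v}_*\le\lambda$ and $+\infty$ otherwise. Strict feasibility of $\mathcal Z$ closes the Lagrangian gap. You instead dualize the norm first, swap min and max by Sion, apply exact LP duality to the support function of the polytope, and then eliminate $q=H^\top v_k-b$.

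The paper's route is a one-line computation. Yours justifies each exchange using only compactness and nonemptiness of $\mathcal Z$. It therefore shows that the nonempty-interior hypothesis is not used in the inner step. It also shows, as a by-product, that $\{v\ge0:\norm{b-H^\top v}_*\le\lambda\}$ is nonempty for every $\lambda\ge0$: the LP dual is feasible for every $q$ because $\mathcal Z$ is bounded.

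Your Slater justification in Step 2 is also sound. With a finitely supported nominal law, the coupling problem is a convex program in the $N$ conditional laws with a single budget constraint, and the identity coupling satisfies it strictly when $\rho>0$. The case $\rho=0$, which your argument does not cover, follows directly: $v_k^\top(h-H\wh Z_k)\ge0$, and $v_k=0$, $\lambda=\norm{b}_*$ is feasible.
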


\begin{proof}
Apply Wasserstein duality to the affine differential; for observation $k$ the state problem $\sup_{Hw\le h}\{b^\top w-\lambda\norm{w-\wh Z_k}\}$ has Lagrangian $b^\top\wh Z_k+v^\top(h-H\wh Z_k)+(b-H^\top v)^\top s-\lambda\norm s$ in $s=w-\wh Z_k$, finite iff $\norm{b-H^\top v}_*\le\lambda$ and then zero at $s=0$; strict feasibility of the interior gives strong duality.
\end{proof}

\begin{proposition}[Two alternatives on unrestricted support]
\label{prop:two-design}
Under Theorem~\ref{thm:relative-exposure} with $\cY=\{A,B\}$, the DRRO optimizer set equals the empirical optimizer set at every radius.
\end{proposition}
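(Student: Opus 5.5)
The plan is to compute both worst-case regrets in closed form from Theorem~\ref{thm:relative-exposure} and compare them directly. First I would swap the two suprema in \eqref{eq:drro}:
\[
\Omega(y)=\sup_{P\in\cU_\rho}\max_{z\in\{A,B\}}\E_P\Delta_{y,z}=\max_{z\in\{A,B\}}\sup_{P\in\cU_\rho}\E_P\Delta_{y,z}.
\]
The swap is legitimate because a supremum of a finite maximum is the maximum of the suprema. The comparator $z=y$ contributes $\E_P\Delta_{y,y}=0$.

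Next I would set $b=a_A-a_B$, $\beta=\rho\norm{b}_*\ge0$, and $m=\wh\E[C(A,Z)-C(B,Z)]$. Theorem~\ref{thm:relative-exposure} gives $\sup_P\E_P\Delta_{A,B}=m+\beta$. Since $\Delta_{B,A}=-\Delta_{A,B}$ and $\norm{-b}_*=\norm{b}_*$, it also gives $\sup_P\E_P\Delta_{B,A}=-m+\beta$. Hence
\[
\Omega(A)=\max\{0,\,m+\beta\},\qquad \Omega(B)=\max\{0,\,-m+\beta\}.
\]
The key structural point is that the dual-norm penalty is the same for both designs. Only the sign of the nominal margin $m$ differs between the two expressions.

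The comparison then splits into three cases by the sign of $m$.

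\emph{Case $m=0$.} The two regrets coincide, so both designs are DRRO optimizers. Both are also empirical optimizers, so the two optimizer sets agree.

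\emph{Case $m<0$.} Here $A$ is the unique empirical optimizer, and $-m+\beta\ge -m>0$, so $\Omega(B)=-m+\beta>0$.
\begin{itemize}
\item If $m+\beta>0$, then $\Omega(A)=m+\beta<-m+\beta=\Omega(B)$.
\item If $m+\beta\le 0$, then $\Omega(A)=0<\Omega(B)$.
\end{itemize}
Either way $A$ is the unique DRRO optimizer, at every radius $\rho\ge0$.

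\emph{Case $m>0$.} This is symmetric to the previous case with the roles of $A$ and $B$ exchanged.

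The only step needing care is the strictness in the case $m<0$. It would fail if $\Omega(B)$ could be $0$, making the two regrets tie at zero. This is ruled out because $\beta\ge0$ and $-m>0$ force $\Omega(B)>0$.

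I would also note briefly why two designs are essential here. With two designs, each design has only one nontrivial comparator, and both comparisons share the same penalty $\rho\norm{b}_*$. With three or more designs, the penalties $\norm{a_y-a_z}_*$ differ across pairs, and this symmetry breaks.
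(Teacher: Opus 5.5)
Your proposal is correct and follows essentially the same route as the paper, which also computes the two worst-case regrets from Theorem~\ref{thm:relative-exposure} as $\max\{0,\rho L-g\}$ and $\rho L+g$ with a shared dual-norm penalty and then compares them. Your separate check that the losing design has strictly positive worst-case regret when the nominal margin is nonzero makes explicit the strictness that the paper's proof leaves implicit.
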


\begin{proof}
With $g=\wh\E[C(B)-C(A)]\ge0$ and $L=\norm{a_A-a_B}_*$, the robust regrets are $\max\{0,\rho L-g\}$ for $A$ and $\rho L+g$ for $B$; $A$ is the unique minimizer whenever $g>0$, and both tie when $g=0$.
\end{proof}

With two designs and unrestricted support a larger radius raises the assessed cost of uncertainty but never creates a different decision; reversals require bounded support or further alternatives---which is exactly what the two-post example and the fleet instances have.

\section{Recourse Duals and Shadow Prices}
\label{app:service}

\paragraph{Recourse dual.} For fixed $(y,s)$,
\begin{equation}
Q(y;\xi^s)=\max_{\pi\in\R^n,\,\mu\in\R^m_+}\Bigl\{\pi^\top d^s-\mu^\top y:\ \pi_j\le p_j,\ \ \pi_j-\mu_ia_{ij}(t^s_{ij})\le\ell_{ij}(t^s_{ij})\ \forall i,j\Bigr\}.
\label{eq:dual}
\end{equation}
$\pi_j$ prices a unit of demand in region $j$ after optimal reassignment, bounded by the shortage penalty; $\mu_i$ is the value of a unit of capacity at post $i$. An optimal dual solution can be chosen with
\[
0\le\pi_j\le p_j,
\qquad
0\le\mu_i\le\bar\mu_i,
\qquad
\bar\mu_i:=\max_j\frac{(p_j-\gamma_{ij}t^{\ell}_{ij})_+}{a^{\ell}_{ij}},
\qquad a^{\ell}_{ij}>0,
\]
though these bounds need not hold for every optimal dual solution. Raising a negative $\pi_j$ to zero preserves dual feasibility (the remaining constraint $\pi_j-\mu_ia_{ij}(t)\le\ell_{ij}(t)$ holds at $\pi_j=0$ because $\ell_{ij}\ge0$) and cannot worsen the objective, whose $\pi_j$-coefficient $d^s_j$ is nonnegative; clipping $\mu_i$ at the cap preserves feasibility because $\pi_j\le p_j$ makes the binding lane inequality hold down to $t^{\ell}_{ij}$, and cannot worsen the objective because its $\mu_i$-coefficient $-y_i$ is nonpositive. These are the bounds $\bar\pi$, $\bar\mu_i$ used in Proposition~\ref{prop:sandwich}(iv).

\paragraph{Shadow prices.} At an interior parameter point with locally unique primal and dual optima, $\partial Q/\partial d_j=\pi_j$, $\partial Q/\partial y_i=-\mu_i$, and for $\ell_{ij}=\gamma_{ij}t$, $\partial Q/\partial t_{ij}=\gamma_{ij}q^*_{ij}+\mu_iq^*_{ij}\,\partial a_{ij}/\partial t$: the direct cost of a delay on the flow it carries plus the value of the capacity it consumes. For two designs the demand and travel sensitivities of $\Delta_{y,z}$ are the differences of these quantities, which is how an adversarial scenario is traced to a bottleneck: demand that is expensive to reach, flows forced onto slow lanes, or capacity whose scarcity makes those delays costly under one design and not the other.


\section{Results Referenced in the Main Text}
\label{app:reviewed}

\subsection{The same forecasts, opposite posts}

Preserving the joint state is costly: it requires demand and travel to be recorded in the same places and the same periods. Before asking an agency to do so, one should ask whether the pairing can change the decision at all. The following example says that it can, with perfect marginal forecasts and nothing else wrong.

\begin{example}[The same forecasts, opposite posts]
\label{ex:marginal-worlds}
An agency must choose which of two posts to staff to serve one district. It knows how busy the district can get and how fast each post can respond, but not whether busy days coincide with slow responses. On any day the district generates $10$ or $100$ calls, each with probability $1/2$, and response time from either post is $1$ or $10$ minutes, each with probability $1/2$; serving one call costs one minute-equivalent per minute of response. Two worlds are consistent with these forecasts:
\begin{center}
\begin{tabular}{cccccc}
\toprule
& &\multicolumn{2}{c}{World A}&\multicolumn{2}{c}{World B}\\
& &\multicolumn{4}{c}{Response time to the district (minutes)}\\
\cmidrule{3-4}\cmidrule{5-6}
Probability & Demand (calls) & Post 1 & Post 2 & Post 1 & Post 2\\
\midrule
$1/2$ & $100$ & $1$ & $10$ & $10$ & $1$\\
$1/2$ & $10$ & $10$ & $1$ & $1$ & $10$\\
\bottomrule
\end{tabular}
\end{center}
In world A, post 1 is fast on the busy day and post 2 the opposite; expected costs are $C_1=\tfrac12(100\cdot1+10\cdot10)=100$ and $C_2=\tfrac12(100\cdot10+10\cdot1)=505$, so post 1 is preferred. World B reverses every pairing and swaps the two costs. The independence model assigns both posts $55\times5.5=302.5$ and misses the distinction entirely. The reversal does not depend on the numbers: with $0<d_L<d_H$ and $0<t_L<t_H$ the gap between the two pairings is $(d_H-d_L)(t_H-t_L)/2>0$, so the preferred post always reverses.
\end{example}

The example is an identification failure, not a numerical curiosity. The agency estimated demand exactly and response times exactly, and was still forced to gamble: the two worlds are both consistent with the data, and they demand opposite choices. Whichever post a forecast-only rule picks, one of the two worlds makes it the worse choice, and no additional marginal data reveals which world is real. The practical question is not whether the missing dependence exists (it does, or it might) but whether it has the power to change the \emph{decision}, and whether that can be confirmed without estimating a joint law. Section~\ref{sec:cert1} shows that it can, exactly, and in one direction only: a pass is a guarantee, a fail is an upper bound.

\subsection{The reference cost is already inside the recourse model}
\label{sec:baseline}

The preceding result might appear to depend on adding a cost from outside the problem. In fact, the recourse model already contains a decision-independent reference cost: the penalty that would be incurred if no demand were served at all. Substituting $u_j=d_j-\sum_iq_{ij}$ into \eqref{eq:recourse} separates the recourse value into this baseline and the net saving from serving:
\begin{equation}
Q(y;d,t)=p^\top d+H(y;d,t),
\qquad
H(y;d,t)=\min_{q\in\mathcal Q(y;d,t)}\sum_{ij}\bigl(\ell_{ij}(t_{ij})-p_j\bigr)q_{ij},
\label{eq:baseline-identity}
\end{equation}
where $\mathcal Q$ is the set of flows satisfying $\sum_iq_{ij}\le d_j$ and $\sum_ja_{ij}(t_{ij})q_{ij}\le y_i$ for all $i,j$. Each unit served in region $j$ avoids the penalty $p_j$ at the cost of shipping $\ell_{ij}$; because zero flow is always feasible, $H\le0$, and $-H$ is the largest net saving achievable relative to serving nobody. The two cost functions $C(y,\xi)=F(y)+Q(y;\xi)$ and $\widetilde C(y,\xi)=F(y)+H(y;\xi)=C(y,\xi)-p^\top d$ describe the same operation and select the same optimal flows in every state. Under a known law they therefore choose the same design. Under standard DRO this is not necessarily true.

\begin{corollary}[Accounting reversal within recourse]
\label{cor:recourse-reversal}
For every $M>1$ there is a one-region, two-post instance of \eqref{eq:recourse} with posts $A$ and $B$ such that
\begin{itemize}[leftmargin=1.5em,itemsep=0pt]
\item both posts serve all demand in every state, so service outcomes are identical under either post;
\item total-cost DRO uniquely selects $B$, whose worst-case regret is $M$;
\item DRO under the no-service-relative cost $C-p^\top d$ uniquely selects $A$, whose worst-case regret is $1$;
\item DRRO uniquely selects $A$ under either accounting.
\end{itemize}
The only difference between the two DRO problems is the reference against which cost is reported; the recourse flows and service levels are the same in every state.
\end{corollary}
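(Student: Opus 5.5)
The plan is to build an explicit instance and check each bullet by hand, in the spirit of Corollary~\ref{cor:common-cost-regret}. The target is the same pairwise structure as there: the cost difference $C_A-C_B$ should equal $(1,-M)$ across two states. Then, with $q=P(\text{state }1)$ and $\cU$ all laws on two states, the regrets are
\[
R(A;P)=\max\{0,(M+1)q-M\},\qquad R(B;P)=\max\{0,M-(M+1)q\}.
\]
Their suprema are $1$ (at $q=1$) and $M$ (at $q=0$), so DRRO uniquely selects $A$. By the invariance argument of Theorem~\ref{thm:common-mode}(i), regret is unchanged when the common term $p^\top d$ from \eqref{eq:baseline-identity} is subtracted. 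The DRRO bullet therefore holds under both accountings once the cost difference is arranged.

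The construction uses one region and two posts. Exactly one post is open, with zero fixed cost and unit resource use $a\equiv1$. Each post has capacity $M+3$ and the shortage penalty is $p=M+2$. I will choose the states so that every shipping cost lies below $p$ and demand never exceeds capacity. Then serving everything is optimal, and both posts have full service in every state, which gives the first bullet. Recourse cost is then $\ell\cdot d$ per state.

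The real design work is choosing $(d,\ell_A,\ell_B)$ in the two states so that three conditions hold together:
\begin{itemize}
\item the difference $(\ell_A-\ell_B)d$ equals $(1,-M)$;
\item state~1 carries the larger total cost for both posts, with $B$ cheaper there;
\item after subtracting $pd$, state~2 governs both worst cases and $A$ is cheaper there.
\end{itemize}
The reversal needs state~1 to have large demand, so that the baseline $pd$ dominates, and state~2 small demand. I would try state~1 as $(d,\ell_A,\ell_B)=(M+3,\,1,\,(M+2)/(M+3))$ and state~2 as $(1,\,1,\,M+1)$. This gives $C_A=(M+3,1)$ and $C_B=(M+2,M+1)$. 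The worst-case costs are $M+3$ for $A$ and $M+2$ for $B$, so total-cost DRO uniquely selects $B$, which has worst-case regret $M$.

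Subtracting $pd$ gives $\widetilde C_A=(-(M+1)(M+3),\,-(M+1))$ and $\widetilde C_B=(-(M+2)^2,\,-1)$. Both maxima now sit in state~2, with values $-(M+1)$ and $-1$, so baseline-relative DRO uniquely selects $A$, with regret $1$.

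The main obstacle is this parameter search. The remaining check is that every shipping cost, including $M+1$ and $(M+2)/(M+3)$, is strictly below $p=M+2$, and that $M>1$ keeps both maxima strict. Once that holds, the bullets follow by direct computation of worst cases over the two-state simplex.
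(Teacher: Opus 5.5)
Your proposal is correct and follows the paper's proof essentially verbatim. You use the same instance (one region, unit resource use, capacity $M+3$, penalty $p=M+2$, states $(M+3,\,1,\,(M+2)/(M+3))$ and $(1,\,1,\,M+1)$), the same cost and baseline-relative vectors, and the same regret suprema $1$ and $M$ from $C_A-C_B=(1,-M)$; the ``parameter search'' you flag as the main obstacle is already settled by the values you wrote down.
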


\subsection{When the test holds for the true law}
\label{sec:transfer}

So far the tests have been stated for the empirical law, the distribution estimated from the operator's recorded operating periods. The natural question is whether their guarantees still hold for the unknown law that actually generated those periods. If the true law lies within the ball around the data, then every bound the test produces is a valid bound on the true law, and the radius tells you how close the true law must be; a ball-free sample-size route is given in the electronic companion.

\paragraph{Assumption R (Route A).} The unknown law $p_0$ that generated the operating periods is supported on a fixed finite set of $S$ states, specified independently of the sample used in the concentration bound; this is also the destination support of the ambiguity set $\cU_\rho$. The operator has $N$ periods, with empirical law $\wh p_N$, and the ground distance between any two states is at most $D_{\max}$. The $N$ periods are i.i.d.\ draws from $p_0$.

\begin{proposition}[How close the empirical law is to the true law]
\label{prop:weissman}
With probability at least $1-\alpha$, the Wasserstein distance between the empirical law and the true law satisfies
\[
W_d(\wh p_N,p_0)\;\leq\;\rho_N(\alpha)
\;:=\;\tfrac{D_{\max}}{2}\sqrt{2(S\ln2+\ln(1/\alpha))/N}.
\]
In words: once $N$ is large enough relative to the number of states $S$ and the spread $D_{\max}$, the empirical law is within $\rho_N$ of the true law, and every bound that holds over the ball of radius $\rho_N$ automatically holds under the true law.
\end{proposition}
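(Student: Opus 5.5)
The plan is to combine a total-variation concentration inequality with an elementary coupling bound that converts $\ell_1$ distance into Wasserstein distance on the finite support. Under Assumption R, $\wh p_N$ and $p_0$ are both laws on the same fixed $S$-point state set. Because the $N$ periods are i.i.d., the empirical law is the normalized count vector of a multinomial sample. I would invoke the $\ell_1$ deviation inequality of \citet{weissman2003}:
\[
\Pp\bigl(\|\wh p_N-p_0\|_1\ge\varepsilon\bigr)\le(2^S-2)e^{-N\varepsilon^2/2}\le\exp\bigl(-N\varepsilon^2/2+S\ln2\bigr).
\]
Setting the right side equal to $\alpha$ and solving gives $\varepsilon_N(\alpha)=\sqrt{2(S\ln 2+\ln(1/\alpha))/N}$. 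So with probability at least $1-\alpha$ we have $\|\wh p_N-p_0\|_1<\varepsilon_N(\alpha)$.

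Next I would show the deterministic inequality $W_d(p,q)\le \tfrac{D_{\max}}{2}\|p-q\|_1$ for any two laws $p,q$ on the state set. The coupling is the standard maximal one. Keep the common mass $\min(p_s,q_s)$ at each state at zero cost. The remaining excess mass $\sum_s(p_s-q_s)^+$ then has to be transported to the deficit states, and this total equals $\tfrac12\|p-q\|_1$ because both laws have total mass one. Any pairing of excess to deficit, for instance the product pairing normalized by the excess, is a valid coupling. Each unit moved pays at most $D_{\max}$, so the transport cost is at most $D_{\max}\cdot\tfrac12\|p-q\|_1$. Taking the infimum over couplings gives the inequality, and combining it with the first step on the same event yields $W_d(\wh p_N,p_0)\le\rho_N(\alpha)$.

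Finally, the interpretive sentence follows from the ball structure. On this event $p_0$ is supported on the destination set and lies in $\cU_{\rho_N}$. To see this, identify each observation row $k$ with its home state. The optimal coupling just constructed then gives a feasible $\theta$ in \eqref{eq:transport-polytope} with budget at most $\rho_N$. Hence every supremum over the ball bounds the corresponding value at $p_0$.

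I expect the main obstacle to be bookkeeping rather than analysis. The transport polytope is indexed by observations $k$, not by states, so I must check that splitting each state's coupling mass across the observations sharing that home state preserves the row sums $w_k$ without changing the cost. Repeated observations, or observations equal to a state only up to a medoid, may require a triangle-inequality correction, and I would state it separately if needed. The concentration step itself I would cite rather than reprove.
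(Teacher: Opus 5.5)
Your proposal is correct and follows the paper's proof: invert the $\ell_1$ deviation inequality of Weissman et al.\ at level $\alpha$ to get $\varepsilon_N(\alpha)$. Then use the coupling that keeps common mass in place and moves the excess mass $\tfrac12\|\wh p_N-p_0\|_1$ at cost at most $D_{\max}$ per unit. Your explicit splitting of the state-level coupling across observation rows, with the triangle-inequality correction for medoid-represented observations, is the bookkeeping the paper compresses into the final sentence of its proof.
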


The proof (Appendix~\ref{app:proofs}) is the $\ell_1$ deviation inequality for multinomial empiricals of \citet{weissman2003} together with the observation that transporting $\wh p_N$ to $p_0$ costs at most $D_{\max}$ per unit of moved mass.

\begin{theorem}[Transfer: what the test guarantees about the true law]
\label{thm:transfer}
Suppose $p_0$ is supported on the finite state set defining
$\cU_\rho$, and $W_d(\wh p_N,p_0)\le\rho_N$.
Then, for every design $y$ and every $\rho\ge\rho_N$:

\begin{enumerate}[label=(\alph*),leftmargin=1.5em,itemsep=0pt]
\item \textbf{The regret bound is valid.} The true regret of $y$ is no larger than the worst-case regret the test computes over the ball: $R(y;p_0)\le\Omega(y;\rho)\le\Omega_{\mathrm{UB1}}(y;\rho)$. The confirmed upper bound is a genuine upper bound on the loss the operator will actually face.

\item \textbf{The premium bound is valid.} The true expected cost of $y$ exceeds its empirical cost by no more than the premium: $\E_{p_0}C(y)-\E_{\wh p}C(y)\le\Pi(y;\rho)$.

\item \textbf{Confirmed low transfers.} If the test certifies the regret-to-premium ratio as low ($\omega\le\tau_{\mathrm{low}}$) at some radius $\rho\ge\rho_N$, then the true regret satisfies $R(y;p_0)\le\tau_{\mathrm{low}}\Pi(y;\rho)$: the true regret is at most $\tau_{\mathrm{low}}$ of the worst-case premium.

\item \textbf{The SAA design is protected.} If the empirical radius is smaller than the sharp threshold ($\rho_N\le\rho^\sharp(y_{\mathrm{SAA}})$), then the sample-average design has zero regret under the true law: no alternative design is better, and no robust model needs to be solved.

\item \textbf{The regret-robust design is protected.} The true regret of the regret-robust design is at most the DRRO value the test computed: $R(y_{\mathrm{DRRO}}(\rho);p_0)\le V_{\mathrm{DRRO}}(\rho)$.

\item \textbf{Confirmed high does not transfer, and direction is not pinned down.} A test that certifies the ratio as high ($\omega\ge\tau_{\mathrm{high}}$) says that \emph{some} plausible law makes another design better; it does not say the true law is that one, nor which side of the empirical law the shift will land on. Concretely: if $p_0=\wh p$ (the true law is exactly the empirical one), the true regret of the SAA design is zero at every radius above $\rho^\sharp$, even though the test reports exposure. And if $p_0$ lies inside the ball but on the side \emph{away} from the comparator that certifies exposure, the regret-robust design can be strictly worse than the SAA design under $p_0$. A test of exposure is a statement about the ball, not about the realized year.
\end{enumerate}
\end{theorem}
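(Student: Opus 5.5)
The whole argument rests on one event: $p_0\in\cU_\rho$. Assume $p_0$ is supported on the state set and $W_d(\wh p_N,p_0)\le\rho_N$. Then an optimal coupling of $\wh p_N$ and $p_0$ is feasible in $\Theta_{\rho_N}$. Its row sums equal the empirical weights, it is supported on the destination set, and its transport cost is at most $\rho_N$. The polytopes $\Theta_\rho$ are nested in $\rho$, so $p_0\in\cU_{\rho_N}\subseteq\cU_\rho$ for every $\rho\ge\rho_N$. This event involves only $(\wh p_N,p_0)$, so every conclusion below holds uniformly in $y$.

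Parts (a)--(e) then follow by evaluating suprema at the feasible point $p_0$.
\begin{itemize}[leftmargin=1.5em]
\item For (a), $R(y;p_0)\le\max_{p\in\cU_\rho}R(y;p)=\Omega(y;\rho)$. The second inequality is Proposition~\ref{prop:sandwich}(ii).
\item For (b), $\E_{p_0}C(y)\le\max_{p\in\cU_\rho}c_y^\top p$; subtracting $c_y^\top\wh p$ gives the premium bound.
\item For (c), the confirmed-low verdict at radius $\rho$ means $\Omega(y;\rho)\le\tau_{\mathrm{low}}\Pi(y;\rho)$ with $\Pi>0$, by Proposition~\ref{prop:tree-correctness}(a). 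Combining this with (a) gives the claim.
\item For (d), take $\rho=\rho_N\le\rho^\sharp$. Theorem~\ref{thm:saa-sharp}(i) gives $\Omega(y_{\mathrm{SAA}};\rho_N)=0$, and (a) then gives $R(y_{\mathrm{SAA}};p_0)=0$.
\item For (e), $R(y_{\mathrm{DRRO}};p_0)\le\Omega(y_{\mathrm{DRRO}};\rho)=V_{\mathrm{DRRO}}(\rho)$ by the definition of the minimizer.
\end{itemize}

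Part (f) is a pair of counterexamples, and I expect it to be the main work. The first clause uses $p_0=\wh p$. Then $R(y_{\mathrm{SAA}};\wh p)=0$ by definition of the SAA design, at every radius. Yet there are instances certified high above $\rho^\sharp$: the two-post example of Section~\ref{sec:two-post} has ratio at least $0.63$ at $\rho=0.40$. So exposure is reported while the true regret is zero.

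For the second clause I would use the capacity coordinate. Take a right comparator $V_z>V_{\mathrm{SAA}}$ that certifies $\omega\ge\tau_{\mathrm{high}}$, and let the DRRO design move right, $V_{\mathrm{DRRO}}>V_{\mathrm{SAA}}$. Let $p_0$ be a left first-order shift of $\wh p$ lying inside the ball, for instance mass moved from a high-$R$ state to a low-$R$ state within budget.

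By Lemma~\ref{lem:window}, $c_{V_{\mathrm{DRRO}}}-c_{V_{\mathrm{SAA}}}=-\Delta_{\mathrm{SAA},\mathrm{DRRO}}$ is nonincreasing in $R$. Its expectation therefore weakly increases under a left shift, so
\[
\E_{p_0}[c_{V_{\mathrm{DRRO}}}-c_{V_{\mathrm{SAA}}}]\ge\E_{\wh p}[c_{V_{\mathrm{DRRO}}}-c_{V_{\mathrm{SAA}}}]=g\ge0.
\]

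To make this strict, I would either take a unique nominal optimizer, so that $g>0$, or choose the shift to move mass across the window; the window lemma then gives a strict gain of the fixed-cost term. The delicate points are exhibiting an explicit instance where the certified-high comparator and the DRRO move lie on the same side, and checking that the left-shifted $p_0$ stays within budget. A small three-state newsvendor instance, with costs written out as in the proof of Proposition~\ref{prop:tree-correctness}(c), should suffice.
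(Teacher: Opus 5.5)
Your proposal is correct and matches the paper's proof essentially step for step. The paper also derives (a)--(e) from the single event $p_0\in\cU_{\rho_N}\subseteq\cU_\rho$ by evaluating each worst case at the feasible point $p_0$, with (d) from Theorem~\ref{thm:saa-sharp}(i). For (f) it uses the two-post instance with $p_0=\wh p$ for the first clause, and a left shift against a right-moving capacity hedge via Lemma~\ref{lem:window} for the second; your explicit conditions $V_{\mathrm{DRRO}}>V_{\mathrm{SAA}}$ and $g>0$ (a unique nominal optimizer) are exactly what the paper's one-line argument for (f) leaves implicit.
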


Item (f) is the caveat the empirical section returns to in detail. A confirmed-high verdict means that somewhere inside the ball there is a law under which a different design would have been better. It does not mean the true law is that law, or even that the true law lies on the same side of the empirical law. The hedge the regret-robust design buys only pays if the realized shift lands on the side the design was moved toward; the test cannot say which side that will be. Section~\ref{sec:direction} shows how a forecast, if it has out-of-sample skill, can supply the missing direction.

\subsection{What the test needs from the model}

\begin{remark}[What the test needs from the model]
\label{rem:needs}
The regret-to-premium ratio is positive only when the model gives the positioning decision something to work with. Three conditions must all hold at the same time. First, the design set must contain plans that cost different amounts in at least one state. If every plan costs the same in every state, then no repositioning can help. Second, the pairwise differential must vary across states; if the cheapest feasible assignment is the same in every state, Proposition~\ref{prop:regimes}(iii) applies and no shift can create regret. Third, an additive cost component common to all designs cancels from $\Delta_{y,z}$ (Corollary~\ref{cor:common-mode}); a multiplicative common slowdown does not. The pre-flight statistic $D=\max_sg_s(y_{\mathrm{SAA}})/\bar c_{y_{\mathrm{SAA}}}$ is the operational check, computable from the same $S\times|\cY|$ recourse solves that $\Omega_{\mathrm{UB1}}$ needs; a small $D$ at every design-set setting means positioning cannot matter under the model at any radius, and the diagnosis tells you which of the three conditions failed.
\end{remark}

\subsection{Direction and context}
\label{sec:direction}

Theorem~\ref{thm:transfer}(f) leaves the manager with a test that is right about exposure but silent about direction. In most operations the direction is not entirely unknown: demand has a trend, storms have forecasts, events have dates. This subsection shows that the transport and separation formulations incorporate the dominance constraints of a one-sided adversary, and that this restriction supplies exactly the information the test was missing (centering the ball on a conditional law is treated in the electronic companion).

\paragraph{One-sided ambiguity.} Even a one-bit forecast, such as ``demand will fall'' or ``demand will rise,'' changes the adversary. On the capacity coordinate the states are ordered by required capacity $R_s$. Write $F_p(r)=\sum_{s:R_s\le r}p_s$ for the cumulative distribution of a law $p$ along this ordering, and restrict the ball to one side of the empirical law:
\begin{equation}
\cU_\rho^{+}:=\cU_\rho\cap\{p:\ F_p\le F_{\wh p}\},
\qquad
\cU_\rho^{-}:=\cU_\rho\cap\{p:\ F_p\ge F_{\wh p}\}.
\label{eq:one-sided}
\end{equation}
The set $\cU_\rho^{+}$ contains the laws in the ball that are first-order stochastically larger than $\wh p$, i.e.\ right shifts that put more mass on high-capacity states. The set $\cU_\rho^{-}$ contains the laws that are first-order stochastically smaller, i.e.\ left shifts. A forecast of rising demand selects $\cU_\rho^{+}$; a forecast of falling demand selects $\cU_\rho^{-}$.

\begin{proposition}[One-sided sets and directional regret]
\label{prop:one-sided}
\leavevmode
\begin{enumerate}[label=(\roman*),leftmargin=1.9em]
\item On finite scalar support, $F_p\le F_{\wh p}$ is $S$ linear inequalities in $p$, so $\cU_\rho^\pm$ is a polytope in $\theta$, and the transport LP, the separation MILP \eqref{eq:separation-milp}, and the per-comparator crossing LPs of Theorem~\ref{thm:saa-sharp}(ii) incorporate the dominance constraints as added rows; strict attainability must also be checked under the restricted law set. Every vertex of the lifted polytope has at most $N+1+S$ positive entries.
\item Define $\Omega^\pm,\Pi^\pm$ by replacing $\cU_\rho$ with $\cU_\rho^\pm$. Then $\Omega^\pm\le\Omega$, $\Pi^\pm\le\Pi$, and $\Omega=\max$ over the two sides only if the symmetric maximizer is one-sided; in general $\max(\Omega^+,\Omega^-)\le\Omega$.
\item (Side of the hedge.) On the capacity coordinate, for every $p\in\cU_\rho^{-}$ and every comparator $V_z>V_{\mathrm{SAA}}$, $\E_p[c_{V_{\mathrm{SAA}}}-c_{V_z}]\le-g_z\le0$; for every $p\in\cU_\rho^{+}$ and every $V_z<V_{\mathrm{SAA}}$ the same holds. Hence under a left-sided set a DRO and a DRRO minimizer can be chosen at or below $V_{\mathrm{SAA}}$; under a right-sided set, at or above it. With a unique nominal optimizer, these inequalities hold for every robust minimizer.
\end{enumerate}
\end{proposition}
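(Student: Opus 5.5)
Each of the three parts of Proposition~\ref{prop:one-sided} reduces to results already in hand, so I take them in order.

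For (i), the capacity coordinate orders the finite states by $R_s$. For each support point $R_{(r)}$, the constraint $F_p(R_{(r)})\le F_{\wh p}(R_{(r)})$ reads $\sum_{s:R_s\le R_{(r)}}\sum_k\theta_{ks}\le F_{\wh p}(R_{(r)})$. This is one linear inequality in $\theta$, and there are $S$ of them, so $\cU^\pm_\rho$ is the image under the linear map $\theta\mapsto p_\theta$ of a polytope in $\theta$. The transport LP, the MILP \eqref{eq:separation-milp} and the crossing LPs of Theorem~\ref{thm:saa-sharp}(ii) are all written over $\theta\in\Theta_\rho$, so adding these rows leaves each of them an LP or MILP of the same type.

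For the vertex count I repeat the basic-solution argument of Proposition~\ref{prop:vertex}. With slacks, there are $N$ row-sum equalities, one budget row and $S$ dominance rows, so a basic feasible solution has at most $N+1+S$ positive entries among the $\theta$ and slack variables. Hence the $\theta$ part has at most that many positive entries. Convexity of $R(y;p_\theta)$ (Lemma~\ref{lem:convex-regret}) still places the regret maximizer at such a vertex. The clause on strict attainability needs only a remark: the $\epsilon$-mixing argument used in Theorem~\ref{thm:saa-sharp}(ii) moves mass toward a state $s^+$, and that move may violate the dominance rows. So strict attainability must be re-verified under the restricted law set; I state this rather than prove it.

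For (ii), $\cU^\pm_\rho\subseteq\cU_\rho$, so both suprema can only decrease, which gives $\Omega^\pm\le\Omega$ and $\Pi^\pm\le\Pi$. It follows that $\max(\Omega^+,\Omega^-)\le\Omega$. If the symmetric maximizer lies in $\cU^+_\rho$ or $\cU^-_\rho$, then the corresponding one-sided value equals $\Omega$, so $\Omega=\max(\Omega^+,\Omega^-)$. Otherwise the maximizer is not first-order comparable to $\wh p$, and the inequality can be strict. I would cite a small three-state example in which mass moves both up and down, but the proposition only requires the inequality.

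Part (iii) is the substantive step. Take $V_z>V_{\mathrm{SAA}}$. By Lemma~\ref{lem:window}, $\Delta_z(s)=c_{V_{\mathrm{SAA}}}(s)-c_{V_z}(s)$ is a nondecreasing function of $R_s$. For $p\in\cU^-_\rho$ we have $F_p\ge F_{\wh p}$, that is, $p$ is first-order dominated by $\wh p$. The standard characterization of first-order dominance then gives $\E_p\Delta_z\le\E_{\wh p}\Delta_z=-g_z\le0$. On a finite ordered support this follows from Abel summation, writing $\E_p h=h(R_{(S)})-\sum_r F_p(R_{(r)})\,[h(R_{(r+1)})-h(R_{(r)})]$ with nonnegative increments.

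The case $p\in\cU^+_\rho$ with $V_z<V_{\mathrm{SAA}}$ is symmetric. There $\Delta_{V_{\mathrm{SAA}},V_z}$ is the negative of the window differential taken with the smaller fleet first, hence nonincreasing in $R$. Combined with $F_p\le F_{\wh p}$, this again gives $\E_p\Delta_z\le-g_z$.

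Consequently, under $\cU^-_\rho$ no larger fleet beats $V_{\mathrm{SAA}}$ under any admissible law. I want to conclude from this that a DRO and a DRRO minimizer can be chosen at or below $V_{\mathrm{SAA}}$. The \textbf{main obstacle} is this passage from pointwise dominance to the location of the robust minimizers, because the worst-case laws differ across designs. I would not argue design by design here; instead I invoke Proposition~\ref{prop:ordering}(iii), which addresses exactly this point, including the strengthening to every minimizer when the nominal optimizer is unique. The reason it works is that for any $V_z>V_{\mathrm{SAA}}$ and every admissible $p$ we have $\E_pc_{V_z}\ge\E_pc_{V_{\mathrm{SAA}}}$, so $\sup_p\E_pc_{V_z}\ge\sup_p\E_pc_{V_{\mathrm{SAA}}}$. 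For regret, $R(V_z;p)\ge R(V_{\mathrm{SAA}};p)$ for each $p$ by the same inequality, since $C^*(p)$ is common to both. Taking suprema, $V_{\mathrm{SAA}}$ weakly dominates every wrong-side fleet under both objectives. Strictness under a unique nominal optimizer follows because $g_z>0$.
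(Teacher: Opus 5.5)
Your proof follows the paper's argument part for part: linear dominance rows and the slack-augmented basis count for (i), the inclusion $\cU^\pm_\rho\subseteq\cU_\rho$ for (ii), and for (iii) the window-lemma monotonicity combined with first-order dominance, with the location of the robust minimizers resting on the same pointwise dominance that underlies Proposition~\ref{prop:ordering}(iii). You in fact spell that dominance out directly, including strictness via $g_z>0$.

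One small slip is in (ii). The stated clause is ``only if'', and you proved the converse. The stated direction is immediate: $\cU^\pm_\rho$ is a nonempty compact polytope, so if $\Omega=\Omega^+$ then the one-sided maximizer attains $\Omega$ and is itself a symmetric maximizer lying in $\cU^+_\rho$ (and likewise on the other side). The same observation shows that when no symmetric maximizer is one-sided, the inequality $\max(\Omega^+,\Omega^-)\le\Omega$ must be strict, not merely ``can be strict''.
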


Part (iii) is the operational content. Under a forecast of falling demand the actionable hedge is a \emph{smaller} fleet, not a larger one. A symmetric ball cannot say this, because it lets the adversary push in both directions, and the larger comparator wins on nominal margin.

\paragraph{A test for direction.} A directional test sits between the cheap bounds and the robust solve. The question is whether the direction of the shift is predictable with out-of-sample skill under a rule registered before the data are seen. If the answer is yes, and the skill is demonstrated on held-out periods rather than asserted, then the one-sided or conditional ball is used, the directional regret $\Omega^d$ is reported alongside the symmetric $\Omega$, and the regret design is sized on the predicted side. If the answer is no, the symmetric test stands as before, and the report must state explicitly that the direction of the hedge is not supported by the data. The limit of the directional route is recorded in Proposition~\ref{prop:tree-correctness}(e): the routing is valid on the event that the forecast side is correct \emph{and} $p_0$ lies in the symmetric ball; when the side is wrong, the symmetric bounds of Theorem~\ref{thm:transfer} remain available only if symmetric-ball membership holds.

\clearpage
\section*{Electronic Companion}
\noindent\textit{Data sets, registration, robustness checks, and extended results; not required by any argument in the reviewed manuscript.}
\setcounter{section}{4}  

\section{Extended Theory}

\subsection{The accounting reversal, continued}

The mechanism is a shift in which state controls the worst-case comparison. Under total cost, the high-demand state is the distinguishing one: there $B$'s shipping bill is one unit below $A$'s, and that margin is what total-cost DRO optimizes. Under the no-service-relative cost, the same high-demand state produces large savings for \emph{both} posts relative to serving nobody, so the contrast between them shrinks; the low-demand state, where $A$ saves $M$ units and $B$ saves one, becomes the worst case instead, and baseline-relative DRO selects $A$. No flow, no service level, and no physical cost has changed between the two accountings---only the reference point---yet the state that determines the design has switched. The size of the baseline is not what drives the reversal: if $\E_Ph$ is the same under every plausible law, subtracting it is a constant shift of the objective and the optimizer is unchanged. A reversal requires the baseline's expectation to vary across the ambiguity set, so that it changes which law the worst case falls on.

\begin{proposition}[Bounded effect of an accounting shift]
\label{prop:baseline-range}
Let $A(\xi)$ be a decision-independent cost with $\sup_{P\in\cU}\E_P|A|<\infty$, and define
\[
f(y)=\sup_{P\in\cU}\E_P\,C(y,\xi),
\qquad
f_A(y)=\sup_{P\in\cU}\E_P\bigl[C(y,\xi)+A(\xi)\bigr],
\]
both finite with minima attained. Let $a_-=\inf_{P\in\cU}\E_P A$ and $a_+=\sup_{P\in\cU}\E_P A$. Then
\begin{enumerate}[label=(\roman*),leftmargin=1.5em,itemsep=0pt]
\item Any minimizer $y_A$ of $f_A$ is near-optimal for the original objective:
\[
0\;\leq\; f(y_A)-\min_{y}f(y)\;\leq\; a_+-a_-,
\]
the range of the added cost's expectation over the ambiguity set.
\item If $a_+=a_-$, then $A$'s expectation is constant across $\cU$, the two objectives differ by that constant, and their optimizer sets are identical.
\end{enumerate}
\end{proposition}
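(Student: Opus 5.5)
The whole argument rests on a two-sided sandwich that holds pointwise in the design. Since expectation is linear in the law, for every $P\in\cU$ and every $y$ we have $\E_P[C(y,\xi)+A(\xi)]=\E_PC(y,\xi)+\E_PA$. We also have $a_-\le\E_PA\le a_+$, and both bounds are finite by $\sup_{P\in\cU}\E_P|A|<\infty$. Taking the supremum over $P\in\cU$ of the upper and lower bounds gives, for every $y\in\cY$,
\[
f(y)+a_-\;\le\;f_A(y)\;\le\;f(y)+a_+.
\]
The left inequality follows from bounding $\E_PA\ge a_-$ inside the supremum. The right one follows from $\sup(g+h)\le\sup g+\sup h$. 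This display is the only structural input; no convexity or finiteness of $\cY$ is needed beyond the stated attainment of the minima.

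For part (i), let $y_A$ minimize $f_A$ and let $y^\star$ minimize $f$. The lower bound $0\le f(y_A)-f(y^\star)$ is immediate from the optimality of $y^\star$. For the upper bound, chain the sandwich with the optimality of $y_A$ for $f_A$:
\[
f(y_A)\;\le\;f_A(y_A)-a_-\;\le\;f_A(y^\star)-a_-\;\le\;f(y^\star)+a_+-a_-.
\]
Subtracting $f(y^\star)=\min_yf(y)$ gives the claim.

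For part (ii), suppose $a_+=a_-=:a$. The sandwich then collapses to $f_A(y)=f(y)+a$ for every $y$. Both objectives differ by a constant, so their argmin sets coincide. This is also the remark preceding the proposition, that a baseline with constant expectation cannot cause a reversal.

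The only step requiring any care is the second inequality in the sandwich. It uses subadditivity of the supremum rather than equality, so the worst-case law for $C+A$ may differ from that for $C$. This is exactly why the bound is the range $a_+-a_-$ rather than zero. One should also check that $f(y)+a_\pm$ is finite, which follows from the integrability hypothesis together with the assumed finiteness of $f$. I do not expect a genuine obstacle here; the proof is a few lines once the sandwich is written.
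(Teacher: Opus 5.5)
Your proof is correct and follows the paper's own argument essentially line for line. Both use the pointwise sandwich $f(y)+a_-\le f_A(y)\le f(y)+a_+$, the chain $f(y_A)+a_-\le f_A(y_A)\le f_A(y^\star)\le f(y^\star)+a_+$ for part (i), and the collapse of the sandwich when $a_+=a_-$ for part (ii).
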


Applied to the no-service reference $A=-p^\top D$, part~(ii) gives a clean separation. When the ambiguity set fixes expected demand, as a Wasserstein ball with fixed first moments does, $\E_P(-p^\top D)$ is the same under every plausible law, and the two accountings select the same design even though demand--travel dependence may remain entirely unresolved. The reversal in Corollary~\ref{cor:recourse-reversal} therefore requires more than an unknown joint: it requires the reference's expectation to \emph{vary} across the ambiguity set, so that different plausible laws rank the two accountings differently. This is a distinct source of difficulty from the dependence question of Section~\ref{sec:cert1}, where the marginals are fixed and the coupling is unknown.

\subsection{From estimation error to decision loss}
\label{sec:decision-price}

The test above answers the theoretical question: can \emph{some} joint law make a different plan better? In practice the operator will not estimate a joint law. The natural fallback is to treat demand and travel as independent, compute each lane's expected workload as the product of the marginal means, and select the cheapest plan under that product law. The question then becomes sharper: if the true law is not the product law but some other coupling of the same marginals, how much worse is the plan the operator would have chosen? And can that loss be bounded from the marginals alone?

Under the product law $P_\perp$ the expected workload on lane $(i,j)$ is $m_{ij}=\mu_j\,\E[\phi_{ij}(T_{ij})]$, and the operator selects the plan of minimum expected cost,
$y_\perp\in\arg\min_y\{c_y+\sum_{ij}a_{y,ij}\,m_{ij}\}$,
with ties broken by a fixed rule. The \emph{decision price} of this choice, evaluated under a joint law $P$, is
\[
\mathcal L_\perp(P)\;=\;\E_P C(y_\perp)\;-\;\min_{z\in\cY}\E_P C(z),
\]
the expected cost a different plan, chosen with knowledge of $P$, could have avoided. To bound $\mathcal L_\perp$ from the marginals, fix an alternative $z$ and write $b_{ij}(z)=a_{y_\perp,ij}-a_{z,ij}$ for the lane coefficient difference. Two quantities control whether $z$ can overtake $y_\perp$ under some coupling: the \emph{nominal margin}
\[
g_z\;=\;\bar C_z-\bar C_{y_\perp}\;\geq\;0,
\]
the cost by which $z$ already loses to $y_\perp$ under the product law, and the \emph{exposure}
\[
A_z\;=\;\sum_{ij}\bigl[b_{ij}(z)^{+}(U_{ij}-m_{ij})\;+\;b_{ij}(z)^{-}(m_{ij}-L_{ij})\bigr],
\]
the largest additional cost that a change of coupling can pile onto $z$'s disadvantage. The dependence can overturn the choice only if $A_z$ exceeds $g_z$ for some $z$.

\begin{proposition}[Decision price of independence]
\label{prop:decision-price}
Under the setting of Theorem~\ref{thm:marginal-threshold}, the worst-case decision price over all joint laws with the given marginals is
\begin{equation}
\sup_{P\in\mathfrak C}\mathcal L_\perp(P)
\;=\;\max_{z\in\cY}K(y_\perp,z)
\;=\;\max_{z\in\cY}\bigl\{A_z-g_z\bigr\},
\label{eq:decision-price-exact}
\end{equation}
and the supremum is attained by a joint law in $\mathfrak C$. Equivalently, the independence-selected plan $y_\perp$ is optimal under every compatible law if and only if $g_z\ge A_z$ for every feasible alternative $z$: the nominal margin covers the exposure on every lane.
\end{proposition}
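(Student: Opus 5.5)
The plan is to reduce the decision price to the worst-case pairwise gap $K$ of Theorem~\ref{thm:marginal-threshold}, exchanging the supremum over laws with the maximum over alternatives. For fixed $P\in\mathfrak C$, $\mathcal L_\perp(P)=\max_{z\in\cY}\E_P[C(y_\perp)-C(z)]$, since $\min_z\E_PC(z)$ enters with a negative sign. Both suprema can be taken in either order, so
\[
\sup_{P\in\mathfrak C}\mathcal L_\perp(P)=\max_{z\in\cY}\sup_{P\in\mathfrak C}\E_P[C(y_\perp)-C(z)]=\max_{z\in\cY}K(y_\perp,z).
\]
This gives the first equality in \eqref{eq:decision-price-exact}. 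Here $\cY$ is finite, or the maximum over $z$ is attained by the compactness and continuity assumptions of Section~\ref{sec:two-accountings}.

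For the second equality, rewrite $K(y_\perp,z)$ from \eqref{eq:exact-marginal-gap} by adding and subtracting the product-law workloads $m_{ij}$. With $b=b(z)$ and $b_{ij}=b_{ij}^+-b_{ij}^-$,
\[
c_{y_\perp}-c_z+\sum_{ij}b_{ij}m_{ij}=\bar C_{y_\perp}-\bar C_z=-g_z .
\]
The remainder is
\[
\sum_{ij}\bigl[b_{ij}^+(U_{ij}-m_{ij})+b_{ij}^-(m_{ij}-L_{ij})\bigr]=A_z .
\]
Hence $K(y_\perp,z)=A_z-g_z$. This uses the fact that $m_{ij}=\mu_j\E[\phi_{ij}(T_{ij})]$ is the expected lane workload under $P_\perp\in\mathfrak C$, and that the bounds satisfy $L_{ij}\le m_{ij}\le U_{ij}$. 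The latter holds because $P_\perp$ is a compatible law, so Lemma~\ref{lem:product-rearrangement} applies to it. It follows that $A_z\ge0$. Since $y_\perp$ minimizes nominal product-law cost, $g_z\ge0$.

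For attainment, fix a maximizing $z^\star$. Theorem~\ref{thm:marginal-threshold} supplies a law $P^\star\in\mathfrak C$, built from one shared uniform per region, with $\E_{P^\star}[C(y_\perp)-C(z^\star)]=K(y_\perp,z^\star)$. Then
\[
\mathcal L_\perp(P^\star)\ge\E_{P^\star}[C(y_\perp)-C(z^\star)]=\max_z K(y_\perp,z).
\]
The reverse inequality holds for every $P$ by the first paragraph, so $P^\star$ attains the supremum.

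The ``if and only if'' statement follows directly. The supremum is nonnegative, because $z=y_\perp$ gives $K=0$. It therefore equals zero, meaning $y_\perp$ is optimal under every compatible law, exactly when $A_z-g_z\le0$ for all $z$.

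The step that needs the most care is the interchange and attainment when $\cY$ is infinite. With a compact design set I would need the maximum over $z$ of $K(y_\perp,z)$ to be attained, and this follows from continuity of $z\mapsto c_z,a_z$. Otherwise the statement should be read with a supremum, and attainment would only hold approximately. I would state the finite or compact case explicitly.
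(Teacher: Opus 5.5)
Your proposal is correct and follows the paper's proof essentially step for step. You interchange the supremum over laws with the maximum over alternatives to get $\max_{z}K(y_\perp,z)$, substitute $c_{y_\perp}-c_z=-g_z-\sum_{ij}b_{ij}(z)m_{ij}$ into \eqref{eq:exact-marginal-gap} to obtain $A_z-g_z$, take the attaining law of Theorem~\ref{thm:marginal-threshold} at a maximizing $z^\star$, and use the zero term from $z=y_\perp$ for the equivalence. Your explicit attainment inequality and your remark that $L_{ij}\le m_{ij}\le U_{ij}$ (so $A_z\ge 0$) are harmless details the paper leaves implicit.
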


Example~\ref{ex:marginal-worlds} is the extreme case: the product law ties the two posts exactly, so $g_z=0$, and the exposure $A_z=U-L=405$ is paid in full in one of the two worlds. More generally, the proposition separates the size of a forecasting error from its decision consequence. A dependence correction that raises every plan's cost by the same amount cancels from every pairwise comparison and has zero decision price; a smaller correction that shifts relative cost toward an alternative with a thin nominal margin can carry a large price. What matters is not how wrong the forecasts are, but how the error redistributes cost across designs relative to the margin $g_z$. This is Test~1 in the decision tree.

\begin{remark}[A nominal tie]
\label{rem:tie}
If $g_z=0$, then $K(y_\perp,z)=A_z$. This comparator violates
the certificate exactly when $A_z>0$. If $A_z=0$, it does not
violate the certificate, although another comparator may.
\end{remark}

\subsection{Sample-size route and context}
\label{sec:finite-sample}

\paragraph{Route B (no ball).} Let $\bar c:=\max_yF(y)+\sum_jp_jd^u_j$ and $t_N(\alpha):=\sqrt{\ln(2|\cY|/\alpha)/(2N)}$.

\begin{proposition}[Uniform deviation]
\label{prop:routeB}
Let the $N$ states be i.i.d.\ draws from $P_0$ on the bounded box of Section~\ref{sec:model}, with every $C(y,\wh\xi_k)$ computed from fully observed states. Then with probability $\ge1-\alpha$ for all $y,z\in\cY$: $|\E_{P_0}C(y)-\E_{\wh p}C(y)|\le\bar c\,t_N(\alpha)$; hence $R(y;P_0)\le R(y;\wh p)+2\bar c\,t_N(\alpha)$ and $R(y_{\mathrm{SAA}};P_0)\le2\bar c\,t_N(\alpha)$. (Hoeffding per design and a union over $|\cY|$; proof below.)
\end{proposition}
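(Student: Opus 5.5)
The plan is to apply Hoeffding's inequality to each design separately, take a union bound over the finite design set $\cY$, and then convert the uniform deviation into the two regret statements by a three-term comparison.

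\textbf{Step 1: bounded range.} Fix $y\in\cY$. The summands $C(y,\wh\xi_k)$, $k=1,\dots,N$, are i.i.d.\ because the states are i.i.d.\ draws from $P_0$ and each cost is computed from a fully observed state. They take values in $[0,\bar c]$:
\begin{itemize}
\item $F(y)\ge0$, taking fixed and capacity costs to be nonnegative;
\item $Q(y;\xi)\ge0$, since $\ell_{ij}\ge0$ and $p_j>0$;
\item $Q(y;\xi)\le\sum_jp_jd_j\le\sum_jp_jd^u_j$, because the all-shortage recourse $q=0$, $u=d$ is always feasible.
\end{itemize}
Hence $0\le C(y,\xi)\le F(y)+\sum_jp_jd^u_j\le\bar c$ on the bounded box.

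\textbf{Step 2: Hoeffding and union bound.} Hoeffding's inequality for a mean of $N$ i.i.d.\ variables with range $\bar c$ gives
\[
\Pp\bigl(|\E_{P_0}C(y)-\E_{\wh p}C(y)|>\bar c\,t\bigr)\le2e^{-2Nt^2}.
\]
A union bound over the $|\cY|$ designs and the choice $t=t_N(\alpha)$ make the total failure probability $2|\cY|e^{-2Nt_N^2}=\alpha$. So with probability at least $1-\alpha$ the deviation bound holds simultaneously for every design. The pairs $(y,z)$ need no separate union, since everything below uses only per-design deviations.

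\textbf{Step 3: regret transfer.} On this event, fix $y$ and let $z_0\in\arg\min_z\E_{P_0}C(z)$, which exists because $\cY$ is finite. Then
\[
R(y;P_0)=\E_{P_0}C(y)-\E_{P_0}C(z_0)\le\bigl[\E_{\wh p}C(y)+\bar c t_N\bigr]-\bigl[\E_{\wh p}C(z_0)-\bar c t_N\bigr].
\]
Since $\E_{\wh p}C(z_0)\ge C^*(\wh p)$, the right side is at most $R(y;\wh p)+2\bar c\,t_N$. For $y=y_{\mathrm{SAA}}$ we have $R(y;\wh p)=0$, which gives the final claim.

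\textbf{Main obstacle.} The only delicate point is the range in Step 1: the argument needs nonnegativity of $F$ and the a priori upper bound on the recourse value. Everything else is standard.
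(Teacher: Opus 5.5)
Your proof is correct and is the same argument as the paper's: Hoeffding for each design on the range $[0,\bar c]$, a union bound over the $|\cY|$ designs at $t=t_N(\alpha)$, and the regret transfer by subtraction, where your choice of $z_0$ plays the role of the paper's $R(y;P_0)=\max_z\E_{P_0}\Delta_{y,z}$. You also verify the range that the paper only asserts, and the nonnegativity of $F$ you flag is not really needed: $F(y)$ is deterministic and cancels from $\E_{P_0}C(y)-\E_{\wh p}C(y)$, so Hoeffding only uses the range $[0,\sum_jp_jd^u_j]$ of $Q$, which is at most $\bar c$ as soon as $\max_yF(y)\ge0$.
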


\begin{corollary}[Sample size for the SAA design]
\label{cor:nsharp}
For each fixed $N$, with probability at least $1-\alpha$,
\[
g_{(2)}\ge2\bar c\,t_N(\alpha)
\quad\Longrightarrow\quad
R(y_{\mathrm{SAA}};P_0)=0.
\]
For $g_{(2)}>0$, the condition is equivalent to
$N\ge2\bar c^2\ln(2|\cY|/\alpha)/g_{(2)}^2$.
When $\bar c>0$, a nominal tie prevents this certificate
from establishing exact optimality.
\end{corollary}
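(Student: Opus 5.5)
The statement is Corollary~\ref{cor:nsharp}. I will derive it from Proposition~\ref{prop:routeB} together with the margin structure of Theorem~\ref{thm:saa-sharp}. The argument is a pure margin comparison on the event $E$ of Proposition~\ref{prop:routeB}. On $E$, which has probability at least $1-\alpha$, every design satisfies
\[
|\E_{P_0}C(y)-\E_{\wh p}C(y)|\le\bar c\,t_N(\alpha).
\]
Fix any comparator $z\ne y_{\mathrm{SAA}}$. The nominal margin is $g_z=\E_{\wh p}[C(z)-C(y_{\mathrm{SAA}})]\ge g_{(2)}$. Applying the deviation bound to both $z$ and $y_{\mathrm{SAA}}$ and using the triangle inequality gives
\[
\E_{P_0}[C(z)-C(y_{\mathrm{SAA}})]\;\ge\;g_z-2\bar c\,t_N(\alpha)\;\ge\;g_{(2)}-2\bar c\,t_N(\alpha)\;\ge\;0
\]
under the hypothesis. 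So no comparator is strictly cheaper than $y_{\mathrm{SAA}}$ under $P_0$. Hence $C^*(P_0)=\E_{P_0}C(y_{\mathrm{SAA}})$, and by \eqref{eq:regret} the regret satisfies $R(y_{\mathrm{SAA}};P_0)=0$. The event $E$ does not depend on $z$, because Proposition~\ref{prop:routeB} is already uniform over $\cY$ through its union bound. The conclusion therefore holds simultaneously for all comparators with the stated probability, and no further union bound is needed.

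For the sample-size form, I substitute $t_N(\alpha)=\sqrt{\ln(2|\cY|/\alpha)/(2N)}$. With $g_{(2)}>0$ and $\bar c>0$ both sides of $g_{(2)}\ge 2\bar c\,t_N$ are nonnegative, so squaring is an equivalence:
\[
g_{(2)}^2\ge 4\bar c^2\ln(2|\cY|/\alpha)/(2N),
\]
that is, $N\ge 2\bar c^2\ln(2|\cY|/\alpha)/g_{(2)}^2$. If $\bar c=0$ the inequality is trivial; I will note this edge case separately.

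For the tie remark: if $g_{(2)}=0$ and $\bar c>0$, then $t_N(\alpha)>0$ for every finite $N$, since $\ln(2|\cY|/\alpha)>0$ whenever $\alpha<2|\cY|$. The hypothesis $0\ge 2\bar c\,t_N$ then fails, so the certificate never fires and cannot establish exact optimality.

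The only delicate point is the convention for $g_{(2)}$ when $\cY$ is a singleton. There the minimum over $z\ne y$ is empty, and regret is identically zero anyway, so the implication holds vacuously. The main bookkeeping I will check is that the factor $2$ from bounding both designs matches the factor in the statement; the calculation above confirms that it does.
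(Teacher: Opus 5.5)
Your proposal is correct and follows the paper's proof essentially step for step. On the uniform Hoeffding event of Proposition~\ref{prop:routeB} you derive $\E_{P_0}[C(z)-C(y_{\mathrm{SAA}})]\ge g_z-2\bar c\,t_N(\alpha)\ge g_{(2)}-2\bar c\,t_N(\alpha)\ge0$, obtain the sample-size form by squaring, and get the tie remark from $t_N(\alpha)>0$. Your extra edge-case notes on a singleton $\cY$ and on $\bar c=0$ are consistent with the paper, which treats $\bar c=0$ in one line and the singleton case in Theorem~\ref{thm:saa-sharp}(iii).
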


\begin{proof}[Proposition~\ref{prop:routeB}]
Each $C(y,\xi)\in[0,\bar c]$, so Hoeffding gives $\Pp(|\E_{P_0}C(y)-\E_{\wh p}C(y)|>\bar c\,t)\le2\exp(-2Nt^2)$ per design; the union over $|\cY|$ at $t=t_N(\alpha)$ gives the simultaneous event. Differences and regrets follow by subtraction and $R(y;P_0)=\max_z\E_{P_0}\Delta_{y,z}$; $R(y_{\mathrm{SAA}};\wh p)=0$.
\end{proof}

\begin{proof}[Corollary~\ref{cor:nsharp}]
By Proposition~\ref{prop:routeB}, with probability at least $1-\alpha$ the deviation bound $|\E_{P_0}C(y)-\E_{\wh p}C(y)|\le\bar c\,t_N(\alpha)$ holds simultaneously for every $y\in\cY$. On that event, for every $z\ne y_{\mathrm{SAA}}$,
\[
\E_{P_0}[C(z)-C(y_{\mathrm{SAA}})]
\;\ge\;
g_z-2\bar c\,t_N(\alpha)
\;\ge\;
g_{(2)}-2\bar c\,t_N(\alpha)
\;\ge\;0,
\]
so no comparator is strictly better than $y_{\mathrm{SAA}}$ under $P_0$ and $R(y_{\mathrm{SAA}};P_0)=0$. For $g_{(2)}>0$ the condition $g_{(2)}\ge2\bar c\,t_N(\alpha)$ rearranges to $N\ge2\bar c^2\ln(2|\cY|/\alpha)/g_{(2)}^2$. When $\bar c>0$ and $g_{(2)}=0$ (a nominal tie), the certificate's condition never holds, so it cannot establish exact optimality; if $\bar c=0$, every cost is identically zero and optimality is immediate.
\end{proof}

The two routes answer the same question at different cost. Route A (the ball) gives a bound that holds uniformly across all designs at once, and its inputs are the radius grid the operator already uses to calibrate the ambiguity set. Route B (the direct sample-size bound) is sharper for a single design but pays a union over the entire design set, so the required $N$ grows with $|\cY|$.

\paragraph{Centering on context.} Suppose a context vector $x$ is observed before the decision, capturing trend, season, a forecast, or event flags. Define nonnegative weights $w_k(x)$ on the recorded states, summing to one, chosen by any standard rule such as a kernel, a nearest neighbor, or a model. The weighted law $\wh p_x=\sum_kw_k(x)\delta_{\wh\xi_k}$ is a conditional empirical law: it is the operator's best estimate of where the next period will fall, given what $x$ says. The ambiguity ball is then centered at $\wh p_x$ instead of the unconditional empirical law. The radius $\rho$ is calibrated to the \emph{residual} uncertainty that remains after the forecast. Concretely, for each recorded period the forecast assigns a conditional law $\wh p_x$, and the Wasserstein distance between that conditional law and the period's actual state measures how far the forecast missed. The radius is set to a high quantile of these miss distances over the training periods, so it captures how wrong the forecast typically is, rather than the full spread of the unconditional record.

\begin{proposition}[Context invariance]
\label{prop:context}
Every statement of Sections~\ref{sec:standing}--\ref{sec:binding-band} holds verbatim with weights $w_k=w_k(x)$, under the zero-weight-row convention of Section~\ref{sec:law-radius} (rows of zero weight removed, their states retained as destinations, remaining weights positive): Lemma~\ref{lem:convex-regret}, Propositions~\ref{prop:vertex}, \ref{prop:sandwich}, \ref{prop:regimes}, Theorems~\ref{thm:separation}, \ref{thm:saa-sharp}, \ref{thm:law-radius}, and Corollary~\ref{cor:common-mode} are properties of the cost matrix and the polytope \eqref{eq:transport-polytope}, which depends on $x$ only through its row sums. Theorem~\ref{thm:transfer} holds conditionally on $x$, on the event that $p_0(\cdot\mid x)$ is supported on the finite state set defining $\cU_\rho$ and $W_d(\wh p_x,p_0(\cdot\mid x))\le\rho_N$.
\end{proposition}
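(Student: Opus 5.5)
The statement to prove is Proposition~\ref{prop:context}. My plan is to show that each listed result uses the weights $w_k$ only as positive row sums of the transport polytope \eqref{eq:transport-polytope}. None of those results uses the specific value $1/N$, nor the fact that the observations are i.i.d.\ samples. I would go through the proofs in Appendix~\ref{app:proofs} one at a time and check exactly where $w_k$ enters.

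Lemma~\ref{lem:convex-regret} involves only the cost vectors and the law $p$, so it carries over with no change. Proposition~\ref{prop:vertex} is a basic-feasible-solution count on $N$ equality rows plus one budget row. That count needs $w_k>0$, so that every row has at least one positive entry; this is exactly what the zero-weight-row convention guarantees. Corollary~\ref{cor:det-relabelling} picks up the factor $\max_k w_k$, which now reads $\max_k w_k(x)$.

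For the remaining results:
\begin{itemize}
\item Proposition~\ref{prop:sandwich} (i)--(iii) and (v) are statements about LPs over $\cU_\rho$ and about convexity, so they are weight-agnostic. For (iv), the step $\E_{\wh p}\Delta_{y_{\mathrm{SAA}},z}\le 0$ still holds provided $y_{\mathrm{SAA}}$ is read as the minimizer under $\wh p_x$.
\item Theorem~\ref{thm:separation} scales the recourse by $p_s$, and $p_s$ comes from any $\theta\in\Theta_\rho$, so the weights play no role in that argument.
\item Lemma~\ref{lem:phi}, Theorems~\ref{thm:saa-sharp} and \ref{thm:law-radius}, Corollary~\ref{cor:common-mode}, and Proposition~\ref{prop:regimes} use the row sums only through the reference term $\sum_k w_k v_{s(k)}$, the exhaustion budget $w_{k^\circ}d_{k^\circ s^\circ}$, and the constraint $\sum_s\theta_{ks}\le w_k$. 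All of these stay well defined when the weights are positive.
\end{itemize}

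The one delicate point is the zero-weight rows. If a row is deleted, its home state has to be kept as a possible destination, because otherwise $\cU_\rho$ shrinks. With that convention the steepest-pair slope in \eqref{eq:sigma-def} ranges only over rows of positive weight, and this matches the hypothesis of Theorem~\ref{thm:law-radius}.

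For the conditional transfer statement I would repeat the proof of Theorem~\ref{thm:transfer} with $\wh p_x$ and $p_0(\cdot\mid x)$ in place of $\wh p_N$ and $p_0$. Conditionally on $x$, the event $W_d(\wh p_x,p_0(\cdot\mid x))\le\rho_N$ gives $p_0(\cdot\mid x)\in\cU_{\rho_N}\subseteq\cU_\rho$, and then items (a)--(f) follow exactly as before. The proof never used Proposition~\ref{prop:weissman}, so the concentration step only has to be assumed on that event; it does not have to be derived.

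I expect the main obstacle to be making sure that \enquote{verbatim} really holds for Theorem~\ref{thm:saa-sharp}(iii) and Proposition~\ref{prop:regimes}(ii). The nominal margins $g_z$ and the quantity $\E_{\wh p}[(R-V_{\mathrm{SAA}})^+]$ have to be recomputed under $\wh p_x$. The Lipschitz chain $\sigma\le L\le 2\bar\Lambda$ is decision-free and weight-free, so that part survives. I would also state explicitly that $y_{\mathrm{SAA}}$ throughout means the weighted-SAA design.
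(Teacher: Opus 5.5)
Your proposal is correct and takes the same route as the paper. The paper's proof is a one-line audit: the cited results use only the zero-weight-row convention, positivity of the retained weights, and $\sum_k w_k=1$; your result-by-result check is a more detailed version of that audit, with the conditional transfer handled the same way.

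One small correction: Theorem~\ref{thm:separation} does use the weights, through $\sum_s p_s=\sum_k w_k(x)=1$, which collapses $\sum_s p_sF(z)$ to $F(z)$. So the invariant you track should be positive retained row sums that sum to one; the context weights satisfy this by construction, so your argument goes through.
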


Centering changes what the robust model protects against. A symmetric ball around the unconditional empirical law treats every deviation from the average period as a possible shift, including the slow drift from one season to the next that a simple forecast would have predicted. Robustifying that drift is buying insurance against a trend the operator already knows is coming. Once the forecast is centered, the ball covers only what the forecast got wrong, which is both smaller in magnitude and more nearly symmetric around the center, so the adversary has less room to exploit one-sided drift.

\begin{proof}[Proof of Proposition~\ref{prop:context}]
No argument in the cited results uses $w_k=1/N$. Under the convention of Section~\ref{sec:law-radius}, rows of zero weight are dropped from the transport formulation while their states remain available as destinations, the remaining weights are positive, and $\sum_kw_k=1$; every cited result uses only this.
\end{proof}

\section{Instances and Pipeline}
\label{sec:emp-instances}

\begin{table}[htbp]
\centering
\caption{The two instances. States are (day, block) pairs; the training law is the empirical law of a fixed random subsample of training states, and the test set is a fixed random subsample of 150 test states (seed 31, drawn after the training draw); the Sandy fold uses every state of its clean test window (132). EMS fleet units, service time, and penalties are registered constants because the incident feed carries no unit identity.}
\label{tab:instances}
\small
\begin{tabularx}{\textwidth}{@{}l X X@{}}
\toprule
 & Urban fleet (NYC yellow taxi) & Emergency medical services (NYC EMS) \\
\midrule
Source & TLC trip records & FDNY EMS incident dispatch feed \\
State & (day, block), \FleetBlocksPerDay\ blocks/day (Sandy: 4) & (day, block), \EmsBlocksPerDay\ blocks/day \\
Demand & trips by region ($J=\FleetRegions$) & high-/low-priority calls by borough ($J=\EmsBoroughs$) \\
Travel & median trip minutes, depot$\to$region lane & median response minutes within borough; cross-borough at centroid road distance, \EmsMph\ mph \\
Posts & \FleetDepots\ candidate depots & \EmsBoroughs\ boroughs \\
Recourse & vehicle-minute capacity; shortage at $p$, $f/p=\fpRatio$ & high: local or missed at \EmsPenHigh\ per call; low: local, cross-borough, or unmet at \EmsPenLow \\
Unit & vehicle-minutes per block & \EmsUnitMin\ veh-min per block; \EmsSvcMin\ min service \\
Folds & 2016--17$\to$17--18; 2017--18$\to$18--19; 2022--23$\to$23--24; pre-Sandy$\to$Sandy (Oct--Nov 2012) & pre-Jonas$\to$Jonas (Jan 2016); pre-Ida$\to$Ida (Sep 2021) \\
Training states (subsample) & \FoldANTrain & \JonasNTrain \\
Test states & \FoldANTest\ / \FoldBNTest\ / \FoldDNTest\ / \SandyNTest & \JonasNTest\ / \IdaNTest \\
Design set & capacity ladder; positioning: \TaxiPosNY\ designs & \JonasNY\ / \IdaNY\ (allocation box and fleet ladder) \\
\bottomrule
\end{tabularx}
\end{table}

Two open data sets supply the operating states, summarized in Table~\ref{tab:instances}. The first, an urban taxi fleet, is built from New York City yellow-cab trip records. A state is a (day, intra-day block) pair; demand is the number of trips originating in each region; travel time is the median trip duration on each depot-to-region lane; and the required capacity of a state is the vehicle-minutes needed to serve its trips. The second, an emergency medical service (EMS), is built from the city's EMS dispatch feed. A state is a (day, six-hour block) pair; demand is the number of high- and low-priority calls in each borough; travel time is the median within-borough response time; and cross-borough service is priced at centroid road distance. Because the EMS feed records only the call, not the responding unit, the fleet size, service time, and penalty for a missed high-priority call are input constants rather than estimated from the data. The EMS results are therefore properties of the model as specified.

Each system is tested on two decisions: how much capacity to field and where to put it. For the capacity decision the allocation across posts is held at its sample-average form and only the total fleet size $V$ is free, so we adopt the aggregate newsvendor model of Section~\ref{sec:primitive} and the sample-average fleet sits at the critical quantile. For the positioning decision the fleet is split across posts or boroughs, with the total either fixed at its sample-average value or allowed to take any of a range of discrete sizes. The ambiguity set is the finite-support Wasserstein ball of Section~\ref{sec:standing}, centered at the empirical law of the training states. The radius is chosen so that the sample-average design's worst-case premium equals a fraction $q\in\{\qA,\qB,\qC\}$ of its nominal cost; if no radius on the finite support reaches that target the row is flagged as saturated. Routing uses the default thresholds $\tau_{\mathrm{low}}=\tauLow$ and $\tau_{\mathrm{high}}=\tauHigh$ from Section~\ref{sec:tree}. Out of sample, each design is scored on the test states using the instance's cost model, and differences carry day-block bootstrap intervals (\nBoot\ draws). When comparing the three designs out of sample, we score them on all test states. We do not restrict to the days on which the sample-average fleet ran short, because that would make any larger fleet look better by construction and would not measure the value of the hedge.

\section{Two-Post Radius Table}
\label{app:twopost}

\begin{table}[htbp]
\centering
\caption{Two-post example: exact transport-LP values by $\ell_1$ radius ($\rho_W=\rho_{\ell_1}/2$). $\Omega(2\text{E/4W})>0$ for $\rho_{\ell_1}>\rho^\sharp=35/610=0.0574$; DRRO switches from 2E/4W to 3E/3W at $\rho_{\ell_1}=0.289$, where the two allocations' worst-case regrets, $305\rho-17.5$ and $120\rho+36$, cross.}
\label{tab:twopost-radius}
\small
\begin{tabular}{lrrrrrrr}
\toprule
$\rho_{\ell_1}$ & 0.00 & 0.10 & 0.20 & 0.30 & 0.40 & 0.60 & 0.80 \\
\midrule
DRRO choice & 2E/4W & 2E/4W & 2E/4W & 3E/3W & 3E/3W & 3E/3W & 3E/3W \\
$\Omega(2\text{E/4W})$ & 0.00 & 13.00 & 43.50 & 74.00 & 92.50 & 129.50 & 166.50 \\
$\Omega(3\text{E/3W})$ & 17.50 & 48.00 & 60.00 & 72.00 & 84.00 & 108.00 & 132.00 \\
\bottomrule
\end{tabular}
\end{table}

Derivation: with $\Delta_{2,3}=(0,370,-240)$ across (clear, eastern, western) and $\wh p=(0.80,0.05,0.15)$, the nominal margin is $17.5$. The adversary against 2E/4W moves $\rho/2$ of mass from the western storm to the eastern one while the western mass lasts ($\rho\le0.30$), gaining $610$ per unit, then from clear weather at $370$ per unit. Against 3E/3W it moves eastern mass to the western storm ($\rho\le0.10$) and then clear mass at $240$ per unit. The corresponding linear programs yield
$\Pi(2\text{E/4W};0.40)=147.60$ and $\omega=0.63$.

\section{Data, Instances, and Registration}
\label{app:data}

\subsection{Urban fleet (New York City yellow taxi)}

\paragraph{Source and windows.} Trip records of the New York City Taxi and Limousine Commission (yellow medallion trips), with pickup zone, drop-off zone, and both timestamps. Six winters supply the rolling folds: 2016--17, 2017--18, 2018--19, 2019--20, 2022--23, and 2023--24, each running from November~1 to March~31. The Sandy instance uses the period 2011-10-01 to 2012-11-30, with training through 2012-10-27 and testing from 2012-10-29 to 2012-11-30, the storm and its aftermath. The 2020--21 and 2021--22 winters are excluded as the pandemic regime; the fold 2018-19$\to$2019-20 is excluded from the rolling table for the same reason, because its test window runs to the end of March 2020.

\paragraph{States.} An operating state is a (day, intra-day block) pair with \FleetBlocksPerDay\ blocks per day, bounded at 00--06, 06--10, 10--15, 15--19, and 19--24 hours. Demand $D_j$ is the pickup count in region $j$, with $J=\FleetRegions$ regions formed by volume-weighted $k$-means over taxi zones; candidate depots are \FleetDepots\ zones on both banks of the East River. Travel time $T_{ij}$ is the median duration of trips from depot zone $i$ to region $j$ within the block, observed where at least three such trips occur and otherwise imputed as the region's block-type median scaled by the state's citywide slowness factor. The required capacity of a state is $R(\xi)=\sum_jD_j\,\mathrm{med}_i(T_{ij})/\FleetUnitMin$, in vehicle-blocks. The training law is the empirical law of a fixed random subsample of \FoldANTrain\ training states (seed recorded in the pipeline); the test set is a fixed random subsample of 150 test states (seed 31, drawn after the training draw); the Sandy fold uses every state of its clean test window (132). Sandy states use four six-hour blocks per day, and their required capacity uses the median over observed lanes only.

\paragraph{Data quality.} A state whose source date falls outside its fold window, or which aggregates fewer than twenty trips, is a corrupt source row and is excluded from certificates and scoring on both sides of every fold. The rule was registered before it was applied. Exclusions by winter are \ExclSixteen\ of \RowsSixteen\ states (2016--17), \ExclSeventeen\ of \RowsSeventeen\ (2017--18), \ExclEighteen\ of \RowsEighteen\ (2018--19), \ExclNineteen\ of \RowsNineteen\ (2019--20), \ExclTwentyTwo\ of \RowsTwentyTwo\ (2022--23), and \ExclTwentyThree\ of \RowsTwentyThree\ (2023--24). The pre-registered Sandy instance was first computed on unfiltered states; the filtered rerun is identical to the reported precision, and both are in the replication package.

\paragraph{Capacity and positioning designs.} On the capacity coordinate the design set is a ladder of fleet sizes in steps of fifty vehicle-blocks up to $\bar V$ equal to twice the largest training requirement, with $V_{\mathrm{SAA}}$ at the ceiling of the three-quarter quantile added as an anchor; it is the empirical minimizer on the ladder in every fold; cost is $fV+p\,(R(\xi)-V)^+$ with $f/p=\fpRatio$. On the positioning coordinate the design set contains \TaxiPosNY\ allocations of the fleet across the eight depots---a fleet ladder, proportional and single-depot extremes, and seeded perturbations---evaluated through the recourse LP of Section~\ref{sec:primitive} with resource consumption equal to travel time; the shortage penalty is scaled by $\kappa$ driven minutes per lost trip in the sweep of Table~\ref{tab:positioning}. Every separation MILP on the fleet positioning instance solved to optimality (zero gap) in 218--461 s wall-clock; every per-comparator crossing LP for $\rho^\sharp$ solved to optimality. Statuses and times are in the replication package.

\subsection{Emergency medical services (New York City)}

\paragraph{Source and fields.} The FDNY EMS Incident Dispatch Data (NYC Open Data, dataset 76xm-jjuj): incident datetime, initial severity level code, borough, and the dispatched unit's travel time. High priority is severity codes 1--3 and low priority codes 4--9. The feed carries no unit identity, so fleet size, unit capacity, service time, and penalties are model constants rather than estimates; they are stated here and in the committed script, and the EMS certificates are properties of the model so specified.

\paragraph{Events.} Chosen by premise before any run---load-increasing, spatially uneven, moderate---from weather records, not from EMS outcomes: Winter Storm Jonas, trained on 2015-01-23 to 2016-01-22 and tested on 2016-01-23 to 2016-02-06; and the remnants of Hurricane Ida, trained on 2020-09-01 to 2021-08-31 and tested on 2021-09-01 to 2021-09-15. Hurricane Sandy predates the feed. The Ida training year overlaps the pandemic and is reported with that caveat.

\paragraph{States and cost.} A state is a (day, six-hour block) pair; per borough it records high- and low-priority call counts and the median within-borough travel time of calls with recorded travel between zero and sixty minutes, with missing values imputed by the borough's training median. Cross-borough travel minutes are centroid great-circle distances with circuity \EmsCircuity\ at \EmsMph\ mph (mean off-diagonal \EmsCrossMean\ minutes). A unit supplies \EmsUnitMin\ vehicle-minutes per block; a call consumes travel plus \EmsSvcMin\ minutes of service. High-priority calls are served from their own borough or missed at \EmsPenHigh\ per call; low-priority calls are served locally, from another borough at the crossing's cost, or left unmet at \EmsPenLow\ per call; fleet cost is \EmsFleetCost\ per unit-block. The base fleet $V$ is the ceiling of the three-quarter training quantile of required vehicle-minutes divided by unit capacity; the design set is every allocation of $V$ units within two of demand-proportional per borough, demand-proportional allocations on a ladder from $V-8$ to $V+40$ in steps of four, and the same two-unit box around the sample-average design at its own fleet. Training uses a fixed random subsample of \JonasNTrain\ states; the test set is every state of the event window. The test metric standardizes the fifteen state coordinates by their training standard deviations, and the same metric is used for the out-of-sample Wasserstein distances of Table~\ref{tab:positioning}'s discussion.

\subsection{Registration and exclusions}

Instances, folds, thresholds, calibration rule, scoring slices, and predictions were recorded in the replication package before results were read. Five entries matter for reading Section~\ref{sec:empirics}. Two instances were chosen by certificate role for three-way scoring: the Sandy fold as the high branch and the 2018-19$\to$2019-20 winter as the low branch. For the Sandy fold five predictions were registered in two same-day batches: the fleet ordering $V_{\mathrm{SAA}}\le V_{\mathrm{DRRO}}\le V_{\mathrm{DRO}}$; DRO$-$DRRO$>0$ with interval excluding zero; DRRO avoiding more than half of DRO's realized loss; out-of-sample SAA $<$ DRRO $<$ DRO with both gaps excluding zero; and $(\mathrm{DRRO}-\mathrm{SAA})/(\mathrm{DRO}-\mathrm{SAA})$ within the interval of $\omega$ at $q=0.10$. The first four held; the fifth failed (0.412 against 0.292) and is reported as failed. The second batch precedes scoring in the replication package but carries no independent timestamp. For the low-branch winter five predictions were registered, three held and two failed: the per-state bound $|\mathrm{DRRO}-\mathrm{SAA}|\le\omega\,\Pi$ held in no test state, and the realized loss ratio (0.422) did not track $\omega$ at $q=0.05$ (0.038). That instance is excluded from Tables~\ref{tab:certificates}--\ref{tab:transfer} because its test window runs into March 2020; its full record is in the replication package. The rolling table's fold list was reduced to the winters for which state files exist, a data-availability change.

\section{Robustness}
\label{app:robust}

\paragraph{Thresholds.} The routes in Tables~\ref{tab:certificates} and~\ref{tab:positioning} use $(\tau_{\mathrm{low}},\tau_{\mathrm{high}})=(\tauLow,\tauHigh)$. Because $\omega$ is reported in every row, the alternative settings are read off directly. Under the more permissive pair $(0.05,0.15)$ the ordinary winters' middle-radius ratios (\FoldDQbShare\ to \FoldBQbShare) cross the high threshold and three of the four capacity folds route \CertHigh; under the stricter pair $(0.20,0.35)$ the Sandy ratio \SandyQbShare\ falls to the gray zone and no calibrated row routes high. Neither setting changes a positioning verdict: every fleet-instance positioning ratio is below \KapTwentyQbShare, and the EMS reallocation ratios \JonasQbShareAlloc\ and \IdaQbShareAlloc\ are gray under the permissive pair and low under the strict pair. The qualitative separation---capacity exposed, positioning not---is not a threshold artifact.

\paragraph{Saturation.} Every $q=\qC$ row of Table~\ref{tab:certificates} is saturated: the finite support cannot carry a premium of a fifth of nominal cost, and the reported $\omega$ is the ratio at the maximal reachable shift. These rows are retained because the low leaf requires the bound at every radius; no conclusion in the text rests on a saturated high.

\paragraph{Data-quality filter.} The pre-registered Sandy instance was computed before the exclusion rule of Appendix~\ref{app:data} was registered; the filtered rerun reproduces its test result to the reported precision and its three out-of-sample differences exactly, because no Sandy test state is affected by the rule.

\paragraph{Lane imputation.} The capacity certificate enters travel only through the median lane time of each state, so the imputation of unobserved depot--region lanes affects it weakly. The positioning certificates use imputed lanes throughout; a zero-lane matrix would make unobserved service free and is not a valid variant.

\paragraph{Penalty scale.} On the fleet positioning instance the shortage penalty was swept from one to twenty driven minutes per lost trip (Table~\ref{tab:positioning}); the ratio rises monotonically from \KapOneQbShare\ to \KapTwentyQbShare\ and remains an order of magnitude below the high threshold. The nominal optimum sits at the fleet ceiling at every $\kappa$, so the sweep is a fixed-fleet sweep, and the window lemma predicts the result.

\paragraph{Bootstrap.} Out-of-sample intervals use a day-block bootstrap with \nBoot\ draws over test days; on strata with fewer than ten days the block collapses to the stratum and the interval is degenerate. This is why the slowest-five-percent and tail slices of the fleet folds, which contain seven or eight states, are reported in the replication package but not read in the text, and why the EMS events, with fifteen test days, are scored on the full-window slice only.

\paragraph{What was not varied.} The training subsample is one fixed random draw per instance; no seed sensitivity was run. The EMS constants (unit capacity, service time, penalties, cross-borough speed) were fixed once before the run and were not swept; the EMS results are conditional on them and are presented as such.

\section{Proofs of Electronic-Companion Results}
\label{app:ecproofs}

\begin{proof}[Proof of Proposition~\ref{prop:baseline-range}]
For every $y$ and $P$, the added expectation lies in $[a_-,a_+]$, so $f(y)+a_-\le f_A(y)\le f(y)+a_+$. Then $f(y_A)+a_-\le f_A(y_A)\le f_A(y_0)\le f(y_0)+a_+$, which rearranges to the upper bound; optimality of $y_0$ gives the lower bound. If $a_+=a_-$ the two bounds on $f_A$ coincide for every $y$.
\end{proof}

\begin{proof}[Proof of Proposition~\ref{prop:decision-price}]
For every $P$, $\mathcal L_\perp(P)=\max_z\E_P[C(y_\perp)-C(z)]$; interchanging the two suprema gives $\sup_P\mathcal L_\perp=\sup_z K(y_\perp,z)$, attained over the compact $\cY$ at some $z^\star$ and, by Theorem~\ref{thm:marginal-threshold}, at a compatible law $P^\star$. By definition of the margin, $c_{y_\perp}-c_z=-g_z-\sum_{ij}b_{ij}(z)m_{ij}$; substituting into \eqref{eq:exact-marginal-gap} gives $K(y_\perp,z)=A_z-g_z$. The alternative $z=y_\perp$ contributes zero, so the maximum is zero exactly when $A_z\le g_z$ for every alternative.
\end{proof}

\end{document}